\definecolor{Chocolat}{rgb}{0.36, 0.2, 0.09}
\definecolor{BleuTresFonce}{rgb}{0.215, 0.215, 0.36}
\DeclareMathAlphabet{\mathbbold}{U}{bbold}{m}{n}
\DeclareSymbolFont{rsfscript}{OMS}{rsfs}{m}{n}
\DeclareSymbolFontAlphabet{\mathrsfs}{rsfscript}
\DeclareFontFamily{OMS}{rsfs}{\skewchar\font'177}
\DeclareFontShape{OMS}{rsfs}{m}{n}{%
      <5> rsfs5
      <6> <7> rsfs7
      <8> <9> <10> rsfs10
      <10.95> <12> <14.4> <17.28> <20.74> <24.88> rsfs10
      }{}
\def\calM{\mathcal{M}}
\newcommand{\ac}{\scriptstyle \text{\rm !`}}
\DeclareMathOperator{\D}{D}
\DeclareMathOperator{\Hom}{Hom}
\DeclareMathOperator{\Diff}{Diff}
\DeclareMathOperator{\Com}{Com}
\DeclareMathOperator{\End}{End}
\DeclareMathOperator{\cEnd}{End}
\DeclareMathOperator{\BV}{BV}
\DeclareMathOperator{\str}{tr}
\theoremstyle{plain}
\newtheorem {theorem}{Theorem}
\newtheorem {lemma}{Lemma}
\newtheorem {corollary}{Corollary}
\newtheorem {question}{Question}
\newtheorem {proposition}{Proposition}
\theoremstyle{definition}
\newtheorem {definition}{Definition}
\theoremstyle{remark}
\newtheorem{remark}{Remark}[section]
\numberwithin{equation}{section}
\begin{document}

\title[Givental group action on TFTs and homotopy Batalin--Vilkovisky algebras]{Givental group action on Topological Field Theories\\ and homotopy Batalin--Vilkovisky algebras}

\author{Vladimir Dotsenko}
\address{School of Mathematics, Trinity College, Dublin 2, Ireland}
\email{vdots@maths.tcd.ie}

\author{Sergey Shadrin}
\address{Korteweg-de Vries Institute for Mathematics, University of Amsterdam, P. O. Box 94248, 1090 GE Amsterdam, The Netherlands}
\email{s.shadrin@uva.nl}

\author{Bruno Vallette}
\address{Laboratoire J.A.Dieudonn\'e, Universit\'e de Nice Sophia-Antipolis, Parc Valrose, 06108 Nice Cedex 02, France}
\email{brunov@unice.fr}

\begin{abstract}
In this paper, we initiate the study of the Givental group action on Cohomological Field Theories in terms of homotopical algebra. More precisely, we show that the stabilisers of Topological Field Theories in genus $0$ (respectively in genera $0$ and $1$) are in one-to-one correspondence with commutative homotopy Batalin--Vilkovisky algebras (respectively wheeled commutative homotopy BV-algebras). 
\end{abstract}

\maketitle 

\setcounter{tocdepth}{1}

\tableofcontents

\section*{Introduction}\label{sec:intro}

The Deligne--Mumford--Knudsen compactifications $\overline{\mathcal{M}}_{g,n}$ of the moduli spaces of curves with marked points form a modular operad. This algebraic structure is defined via the mappings of moduli spaces of curves that identify two marked points of one or two curves. This modular operad structure passes to homology, and algebras over ${H}_\bullet(\overline{\mathcal{M}}_{g,n})$ are called Cohomological Field Theories, or CohFTs for short. The notion of a CohFT was introduced by Kontsevich and Manin in~\cite{KonMan} in order to capture the main properties of Gromov--Witten invariants of target varieties. Recently a new set of natural examples of CohFTs came from the quantum singularity theory of Fan--Jarvis--Ruan--Witten~\cite{FJR}.

Cohomological field theories also play a crucial role in the formulation of one of the versions of the Mirror Symmetry conjecture. Namely, in \cite{BCOV}, Bershadsky, Cecotti, Ooguri and Vafa introduced a construction of a mirror partner (B-side) for the Gromov--Witten invariants (A-side) of a mirror dual
Calabi-Yau manifold. More precisely, they considered what is now called the BCOV action on the Dolbeault complex of a Calabi--Yau manifold. Barannikov and Kontsevich showed in \cite{BarKon} that the critical value of the BCOV action indeed provides a genus 0 CohFT structure on the Dolbeault cohomology.

\smallskip

In the series of papers \cite{Giv01, Giv01bis, Giv04}, Givental developed a particular group action on a special class of formal power series (the ``$R$-action''). Using this group action and some extra operators, he proposed an explicit conjectural formula for the Gromov--Witten invariants of target varieties with semi-simple quantum cohomology (e.g., projective spaces) in terms of the genus $0$ data and known Gromov--Witten invariants of a finite number of disjoint points. This conjecture was proved by Teleman in~\cite{Teleman} via a complete classification of semi-simple CohFTs.

The (co)homology ring of the Deligne--Mumford--Knudsen moduli space of stable curves is an intricate subject of study. One way to approximate it is to
look at the tautological rings $RH^\bullet(\overline{\mathcal{M}}_{g,n})$ that are defined as the subalgebras of the cohomology algebras of $\overline{\mathcal{M}}_{g,n}$ that contain all natural cohomology classes (like the $\psi$-classes and the $\kappa$-classes). In fact, using the Poincar\'e duality, one can define a structure of a modular operad on the collection of the cohomology algebras $\{H^\bullet(\overline{\mathcal{M}}_{g,n})\}$ of the moduli spaces 
$\overline{\mathcal{M}}_{g,n}$. Then the collection $\{RH^\bullet(\overline{\mathcal{M}}_{g,n})\}$ can be defined as the collection of the minimal system of subalgebras of $H^\bullet(\overline{\mathcal{M}}_{g,n})$ that forms a modular suboperad of $H^\bullet(\overline{\mathcal{M}}_{g,n})$. In genus $0$, the tautological ring coincides with the full cohomology ring: $RH^\bullet(\overline{\mathcal{M}}_{0,n})=H^\bullet(\overline{\mathcal{M}}_{0,n})$.

In \cite{FSZ10}, Faber, the second author and Zvonkine proved that the Givental group acts on representations of the modular operad 
$RH^\bullet(\overline{\mathcal{M}}_{g,n})$ of a given dimension. Later Kazarian~\cite{Kaz07} and Teleman~\cite{Teleman} observed that there is a way to describe the Givental group action as an action on CohFTs of a given dimension, that is, representations of the modular operad $\{H_\bullet(\overline{\mathcal{M}}_{g,n})\}$.

\smallskip

In the recent preprint \cite{DV}, Drummond-Cole and the third author described, in terms of the Homotopy Transfer Theorem, or HTT for short, how the underlying homology groups of some differential graded Batalin--Vilkovisky algebras can be endowed with a natural Frobenius manifold structure. In general, the HTT produces homotopy BV-algebra structures on homology. But, it is proved in \emph{loc.\ cit.}\ that the transferred homotopy BV-algebra gives rise to a Frobenius manifold when the induced operator $\Delta$ and its higher homotopies vanish. (In our definitions, a Frobenius manifold is just a genus $0$ CohFT structure.) This generalises the Barannikov--Kontsevich Frobenius manifold structure and it provides higher homotopical invariants which allow one to reconstruct the homotopy type of the original dg $\BV$-algebra, for instance the Dolbeault complex.

\smallskip

The latter result hints at a certain role played by homotopy BV-algebras in the context of the Mirror Symmetry conjecture and the Givental group action. The present paper initiates the study of Givental group action in terms of homotopy BV-algebras, as follows.

\smallskip

In genus $0$, on the one hand, we restrict ourselves to topological field theories, or TFT for short, which are cohomological field theories concentrated in degree $0$. A genus $0$ TFT is equivalent to a commutative algebra structure. On the other hand, we restrict ourselves to commutative $\BV_\infty$-algebras, which are $\BV_\infty$-algebras where many higher operations vanish, so that only the commutative product and a sequence of differential operators $D_l$ of order at most $l$, $l\ge 1$, remain. Theorem~\ref{comm-bv-infty} states that the data of a commutative $\BV_\infty$-algebra structure on a TFT is equivalent to the data of an element of the Lie algebra of the Givental group which preserves the given TFT. 

In genera $0$ and $1$, on the one hand, we restrict ourselves to genera $0$ and $1$ TFTs. This algebraic structure is equivalent to a commutative algebra equipped with a compatible trace. On the other hand, we restrict ourselves to wheeled commutative $\BV_\infty$-algebras, which are generalizations of the notion of commutative $\BV_\infty$-algebras, but equipped with a coherent trace. Theorem~\ref{wheeled-comm-bv-infty} states that the data of a wheeled commutative $\BV_\infty$-algebra structure on a genera $0$ and $1$ TFT is equivalent to the data of an element of the Givental group which preserves the given TFT. This theorem partly relies on a recent proof~\cite{Pet12} of the Gorenstein conjecture for moduli spaces in genus~$1$~\cite{HL,Pan}.

\smallskip

An algebra over a modular operad has to be equipped with a scalar product and is, therefore, finite dimensional. To get rid of the dimension assumption, we recall that algebras over an operad or over a wheeled operad do not have to be finite dimensional, and consider, respectively, the operad $\{H_\bullet(\overline{\calM}_{0,n})\}$ and the wheeled operad  $\{H_\bullet(\overline{\calM}_{\leq 1,n})\}$ associated to the modular operad of cohomology algebras $\{H_\bullet(\overline{\calM}_{g,n})\}$.  It is worth mentioning that the main result of \cite{DV} relies on the Koszul duality of the two operads $\{H_\bullet(\overline{\mathcal{M}}_{0,n})\}$ and $\{H_\bullet({\mathcal{M}}_{0,n})\}$. The Koszul duality for the pair of wheeled operads $\{H_\bullet(\overline{\mathcal{M}}_{\leq 1,n})\}$ and $\{H_\bullet({\mathcal{M}}_{\leq 1,n})\}$, as well as the Koszul duality for the pair of (wheeled) properads $\{H_\bullet(\overline{\mathcal{M}}_{g,n})\}$ and $\{H_\bullet({\mathcal{M}}_{g,n})\}$ are interesting  open 
questions. In higher genera, the study of the precise relationship between the full action on the Givental group on a general CohFT with homotopy $\BV$-algebras is yet to be finished. This will be the subject of a future work.

\smallskip

The paper is organised as follows. In Section~\ref{sec:notions}, we have accumulated all the necessary background information on the intersection theory for moduli spaces of curves, on operads and the Givental group action. In Section~\ref{sec:DifOp}, we recall the definition of a differential operator on a commutative algebra, introduce a new notion of compatibility with the trace, and prove some auxiliary results on differential operators compatible with traces in the sense of that definition. The first theorem, in genus $0$, is proved in Section~\ref{com-bv-infty-via-givental}. The second theorem, in genera $0$ and $1$, is proved in Section~\ref{sec:wheeled-ho-bv-givental}. The first appendix explains why differential operators arise naturally in the context of wheeled homotopy $\BV$-algebras. The second appendix deals with a generalized BCOV theory.

\subsection*{Conventions. } Throughout the paper, all vector spaces, unless otherwise specified, are defined over the field of complex numbers~$\mathbb{C}$. Most of our constructions implicitly assume that we work with the tensor category of graded vector spaces (that is, ``commutative'' means ``graded commutative'', ``trace'' means ``supertrace'', ``commutator'' means ``supercommutator'' etc.); for the convenience of the reader, we write all the formulae for the elements of degree zero, keeping in mind that in general signs will appear in formulae according to the Koszul sign rule for evaluating operations on elements. The words ``commutative algebra'' always refer to a commutative associative algebra. We use the ``topologist's notation'' for finite sets, putting $\underline{n}:=\{1, \ldots , n\}$ and $[n]:=\{0, 1, \ldots , n\}$. For a given product $a_1a_2\cdots a_n$ of factors indexed by $\underline{n}$, we denote by~$a_I$, for $I\subset\underline{n}$, the product of factors whose subscripts are in~$I$.

\subsection*{Acknowledgements. } The first author was supported by Grant GeoAlgPhys 2011--2013 awarded by the University of Luxembourg. The second author was supported by the Netherlands Organisation for Scientific Research. The third author would like to thank the Max Planck Institute for Mathematics (Bonn) for a long term invitation. The authors would like to thank the University of Luxembourg and the Max Planck Institute for Mathematics (Bonn) for the excellent working conditions enjoyed during their visits there. The authors also wish to thank Maxim Kazarian for sending a copy of his unpublished manuscript~\cite{Kaz07}.

\section{Recollections}\label{sec:notions}

In this section, we recall only the most basic information. For more information on the moduli spaces of curves, see the survey \cite{Vakil}, for a detailed discussion of cohomological field theories, correlators, and relations between them --- the book \cite{Manin}, for information on operads~--- the book \cite{LodayVallette10}.

\subsection{The moduli spaces of curves}

The \emph{moduli space of curves} $\calM_{g,n}$ parametrises smooth complex curves of genus $g$ with $n$ ordered marked points. Under the usual assumption $2g-2+n>0$, it is a smooth complex orbifold of dimension $3g-3+n$. Its Deligne--Mumford--Knudsen compactification $\overline{\calM}_{g,n}$ parametrises stable curves of genus $g$ with $n$ ordered marked points. A \emph{stable curve} is a connected curve with a finite automorphism group whose allowed singularities are simple nodes. The space $\overline{\calM}_{g,n}$ is a smooth compact complex orbifold of (complex) dimension~$3g-3+n$.

Let us recall three kinds of ``natural mappings'' that can be defined  between the different moduli spaces of curves.
First, there are projections $$\pi\colon \overline{\calM}_{g,n+1}\to \overline{\calM}_{g,n}$$ that forget the last marked point. 
Second, the identification of the last two marked points gives rise to the 2-to-1 mapping $$\sigma\colon\overline{\calM}_{g-1,n+2}\to\overline{\calM}_{g,n}$$ whose image is the boundary divisor of irreducible curves with one node. 
Third, gluing together two curves along their last marked points gives rise to the mapping $$\rho\colon \overline{\calM}_{g_1,n_1+1}\times \overline{\calM}_{g_2,n_2+1}\to \overline{\calM}_{g,n}, \quad g_1+g_2=g, n_1+n_2=n.$$  
These mappings $\rho$ produce the other irreducible boundary divisors of the compactification of $\overline{\calM}_{g,n}$.
 
The Deligne--Mumford--Knudsen compactification $\overline{\calM}_{g,n}$ has a natural stratification by the topological type of stable curves. The images of the mappings $\sigma$ and $\rho$ give a complete description of strata in codimension~$1$. An irreducible boundary stratum of codimension $k$ in $\overline{\calM}_{g,n}$ is represented as an image $p(S)$ of the product $S=\overline{\calM}_{g_1,n_1}\times \cdots \times \overline{\calM}_{g_a, n_a}$. Here $p$ is a composition of the $k$ natural mappings ($\sigma$ and/or $\rho$) described above.

The cohomology algebras of the moduli space of curves are complicated objects, and only limited information about them is available. However, a special system of subalgebras, called the tautological rings, is more accessible. The system of \emph{tautological rings} $RH^\bullet(\overline{\calM}_{g,n})\subset H^\bullet(\overline{\calM}_{g,n},\mathbb{C})$ is defined as the minimal system of subalgebras of the aforementioned cohomology algebras that is closed under the push-forwards and the pull-backs via the natural mappings. The cohomology classes in $RH^\bullet(\overline{\calM}_{g,n})$ are called the \emph{tautological classes}. The elements of the tautological ring that will be of crucial importance for our computations are the following ``$\psi$-classes''.

\begin{definition}[$\psi$-classes]
Both the moduli space $\calM_{g,n}$ and its compactification $\overline{\calM}_{g,n}$ have $n$ tautological line bundles $\mathbb{L}_i$. The fibre of $\mathbb{L}_i$ over a point represented by a curve $C_g$ with marked points $x_1,\dots,x_n$ is equal to the cotangent line $T^*_{x_i}C_g$. The cohomology class $\psi_i$ of $\overline{\calM}_{g,n}$ is defined as the first Chern class of the line bundle~$\mathbb{L}_i$:
\begin{equation}
\psi_i=c_1(\mathbb{L}_i)\in H^2(\overline{\calM_{g,n}},\mathbb{Q}).
\end{equation}
\end{definition}

\smallskip

Using the Poincar\'e duality and the push-forward on the homology, one can define the push-forward maps $\pi_*$, $\sigma_*$, and $\rho_*$ on the cohomology as follows: $\pi_*$ is the composite
\begin{equation}
H^\bullet(\overline{\calM}_{g,n+1})\to H_{6g-6+2n+2-\bullet}(\overline{\calM}_{g,n+1})\to H_{6g-6+2n+2-\bullet}(\overline{\calM}_{g,n})\to H^{\bullet-2}(\overline{\calM}_{g,n}), 
\end{equation}
$\sigma_*$ is the composite
\begin{equation}
H^\bullet(\overline{\calM}_{g-1,n+2})\to H_{6g-12+2n+4-\bullet}(\overline{\calM}_{g-1,n+2})\to H_{6g-12+2n+4-\bullet}(\overline{\calM}_{g,n})\to H^{\bullet+2}(\overline{\calM}_{g,n}),  
\end{equation}
and $\rho_*$ is the composite
\begin{multline}
H^\bullet(\overline{\calM}_{g_1,n_1+1})\otimes H^\bullet(\overline{\calM}_{g_2,n_2+1})\to H^\bullet(\overline{\calM}_{g_1,n_1+1}\times\overline{\calM}_{g_2,n_2+1})\to\\ \to 
H_{6g_1+6g_2-12+2n_1+2n_2+4-\bullet}(\overline{\calM}_{g_1,n_1+1}\times\overline{\calM}_{g_2,n_2+1})\to \\ \to
H_{6g_1+6g_2-12+2n_1+2n_2+4-\bullet}(\overline{\calM}_{g_1+g_2,n_1+n_2+1}) \to H^{\bullet+2}(\overline{\calM}_{g_1+g_2,n_1+n_2+1}). 
\end{multline}

\subsection{Operads}\label{subsec:Operad}
The various notions of operads encode the algebraic structures defined by various types of operations. The toy model of operads is the endomorphism operad $\mathrm{End}_V$ whose components $\mathrm{Hom}(V^{\otimes n}, V)$ consist of multilinear maps defined on a given vector space. In particular, $\End_V(1)$ is the space of all linear operators on~$V$; we shall keep the usual notation $\End(V)$ for it, hoping that no confusion would arise.

\begin{definition}[Operad]
An \emph{operad} is an \emph{$\mathbb{S}$-module} $\mathcal P$, that is a collection of right $\mathbb{S}_n$-modules $\lbrace \mathcal{P}(n)\rbrace_{n\in \mathbb{N}}$, endowed with equivariant \emph{partial compositions}
\begin{equation}
\circ_i \ : \ \mathcal{P}(m)\otimes \mathcal{P}(n) \to  \mathcal{P}(m+n-1), \quad \text{for} \ 1\leq i \leq m \ , 
\end{equation}
satisfying 
\begin{displaymath}
\begin{array}{llcll}
&(\mu \circ_i \nu)\circ_{i-1+j}\omega  &=& \mu \circ_i (\nu\circ_{j}\omega ), &\mathrm{for }\ 1\leq i\leq l, 1\leq j\leq m,  \\
&(\mu \circ_i \nu)\circ_{k-1+m}\omega  &=& (\mu \circ_k\omega)\circ_{i}\nu, &\mathrm{for }\  1\leq i < k\leq l, \\
\end{array}
\end{displaymath}
for any $\mu \in \mathcal{P}(l), \nu\in \mathcal{P}(m), \omega\in \mathcal{P}(n)$.
An operad is required to be \emph{unital}, that is equipped with an element $\mathrm{I} \in \mathcal{P}(1)$ acting as a unit for the partial compositions. 
\end{definition}

Elements of an operad model operations with $n$ inputs and one output. The partial composition $\circ_i$ amounts to composing one operation at the $i^{\rm th}$ input of another one, as the next figure shows.

\begin{center}
\includegraphics[scale=0.2]{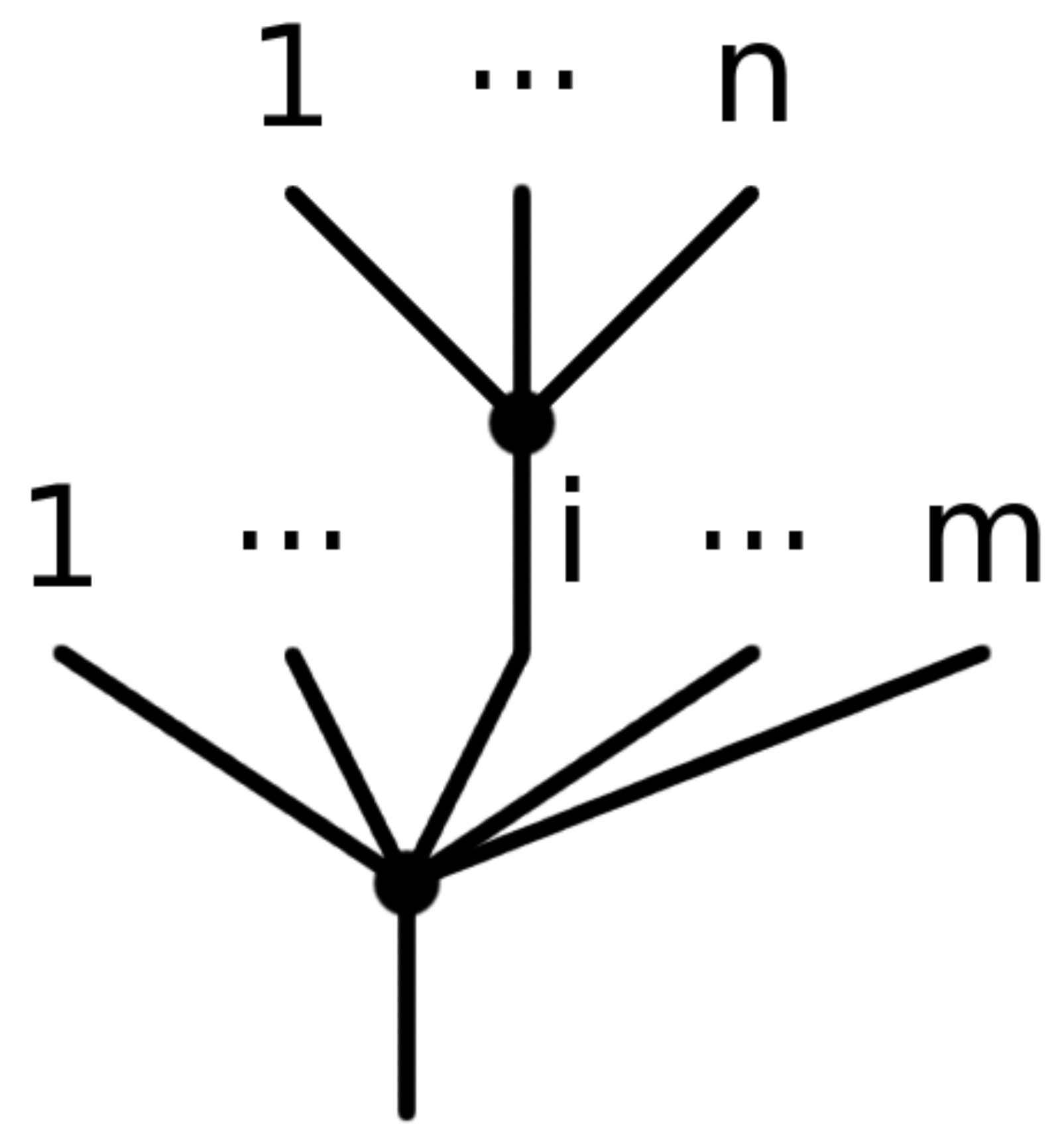} 
\end{center}

The category of $\mathbb{S}$-modules is equivalent to  the category $\mathsf{Vect}^{\mathsf{Bij}^{\textrm{op}}}$ of  contravariant functors from the groupoid  $\mathsf{Bij}$ of finite sets and bijections to the category $\mathsf{Vect}$ of vector spaces:
\begin{equation}
 \begin{array}{lcl}
\mathbb{S}\mathsf{-modules} & \cong & \mathsf{Vect}^{\mathsf{Bij}^{\textrm{op}}} \\
\lbrace \mathcal{P}(n)\rbrace_{n\in \mathbb{N}} & \mapsto &  \mathcal{P}(X):=\Big(\bigoplus_{f:\underline{n}\to X} \mathcal{P}(n)\Big)_{\mathbb{S}_{n}}   \\
\mathcal{P}(\underline{n})&\mathbin{\reflectbox{$\mapsto$}} & \mathcal{P}(-) \ , 
\end{array}
\end{equation}
where in the space of coinvariants on the right-hand side the sum is over all the bijections from  $\underline{n}$ to $X$ and where the right action of $\sigma\in \mathbb{S}_{n}$ on $(f;\mu)$ for $\mu\in \mathcal{P}(n)$ is given by
 $(f;\mu)^{\sigma} :=(f\sigma;\mu^{\sigma}).$ So, from now on, we freely identify them.

\begin{definition}[Modular operad \cite{GetKap98}]
A \emph{modular operad} is 
 a \emph{graded $\mathbb{S}$-module} $\mathcal P$, that is a graded collection of right $\mathbb{S}_{n}$-modules $\{\mathcal{P}_g(n)\}_{g,n\in \mathbb{N}}$, endowed with equivariant \emph{partial compositions}
\begin{equation}
\circ_i^j \ : \ \mathcal{P}_g(m)\otimes \mathcal{P}_{g'}(n) \to  \mathcal{P}_{g+g'}(m+n-2), \quad \text{for} \ 1\leq i \leq m \ \text{and} \ 1\leq j \leq n
\end{equation}    
\begin{center}
\includegraphics[scale=0.2]{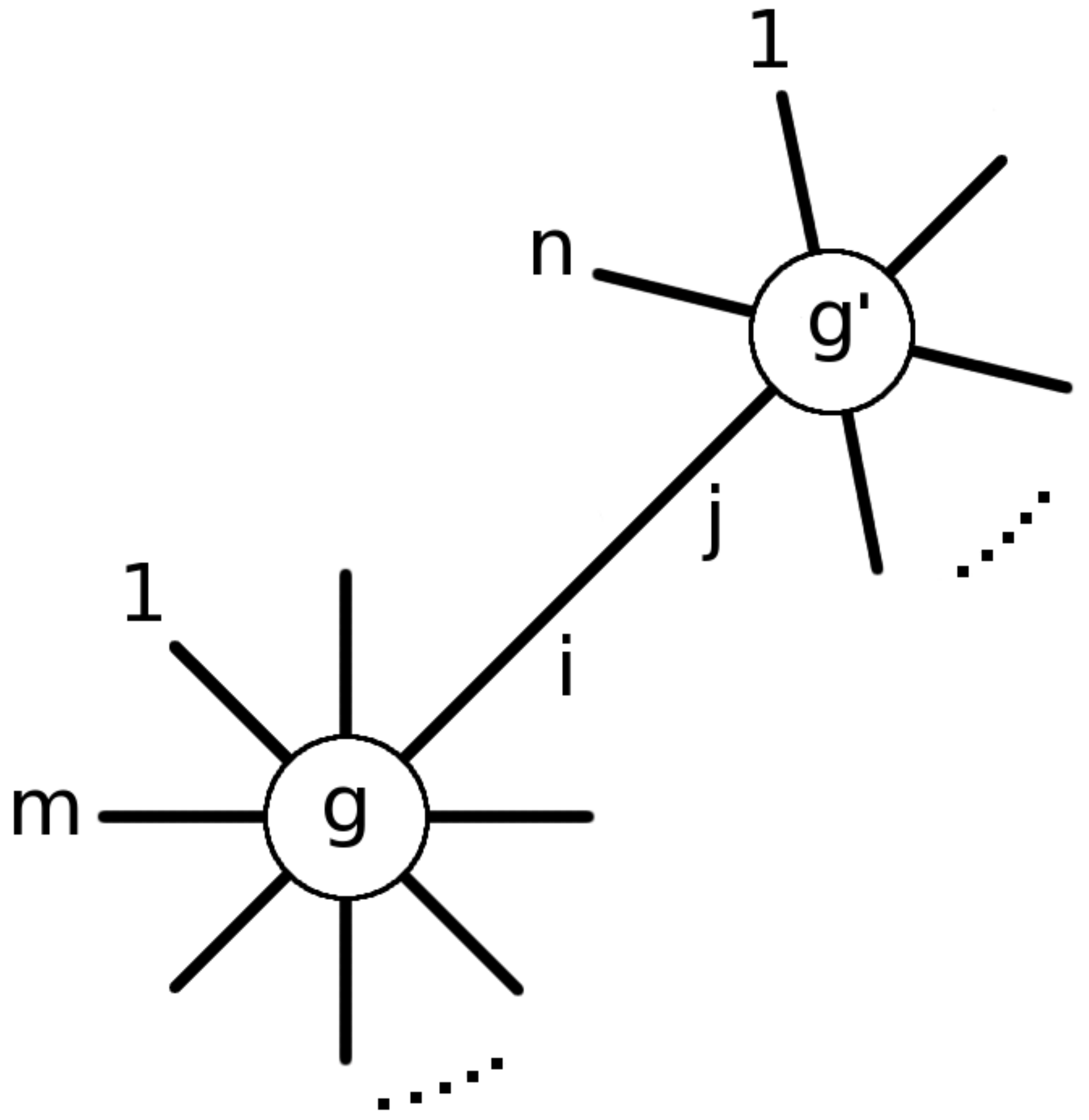} 
\end{center}
and equivariant \emph{contractions}
\begin{equation}
\xi_{ij} \ : \  \mathcal{P}_g(n) \to  \mathcal{P}_{g+1}(n-2), \quad \text{for} \ 1\leq i\neq j \leq n    \ . 
\end{equation}
\begin{center}
\includegraphics[scale=0.2]{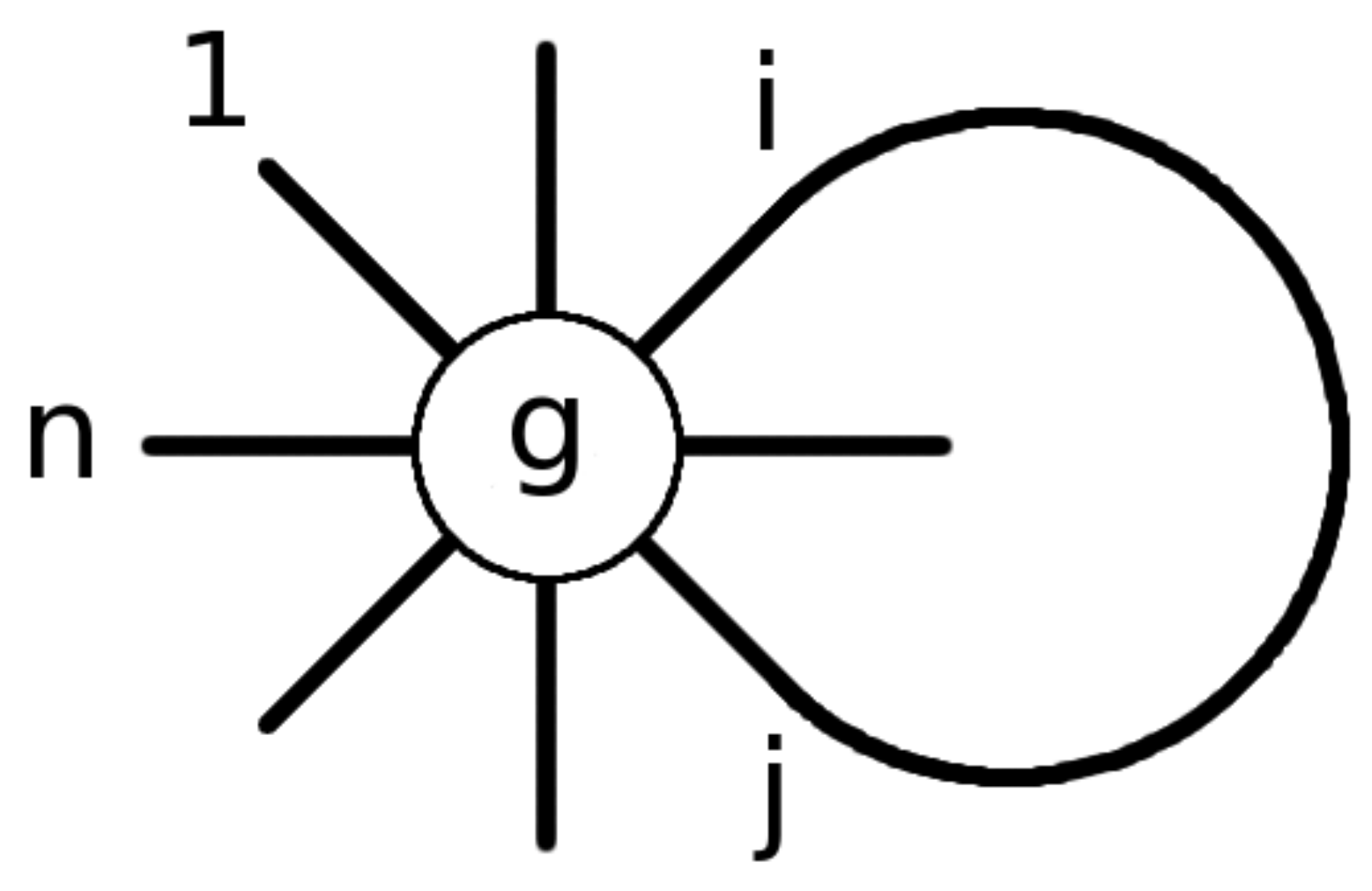} 
\end{center}
These structure maps are required to satisfy the following properties, for every choice of $\mu \in \mathcal{P}_g(X)$, $\nu\in \mathcal{P}_{g'}(Y)$, and $\omega\in \mathcal{P}_{g''}(Z)$:
\begin{displaymath}
(\mu \circ_i^j \nu)\circ_k^l\omega = \left\{ 
\begin{array}{ll}
 \mu \circ_i^j (\nu\circ_k^l\omega ), &\mathrm{when}\ k\in Y,  \\
 (\mu \circ_k^l\omega)\circ_{i}^j\nu , &\mathrm{when}\  k\in X, \\
\end{array}\right.
\end{displaymath}
for any $i\in X$, $j\in Y$, and $l\in Z$;
\begin{equation}
\xi_{ij} \xi_{kl} \, \mu =\xi_{kl} \xi_{ij} \, \mu   
\end{equation}
for any distinct $i$, $j$, $k$, and $l$ in $X$; 
\begin{displaymath}
\xi_{ij}(\mu \circ_k^l \nu) = \left\{ 
\begin{array}{ll}
\xi_{ij}(\mu) \circ_k^l \nu, &\mathrm{when}\ i,j\in X,  \\
\mu \circ_k^l \xi_{ij}(\nu), &\mathrm{when}\ i,j\in Y,  \\
\xi_{kl}(\mu \circ_i^j \nu), &\mathrm{when}\ i\in X \ \text{and}\ j\in Y,  \\
\xi_{kl}(\nu \circ_i^j \mu), &\mathrm{when}\ i\in Y \ \text{and}\ j\in X,  
\end{array}\right.
\end{displaymath}
for any $k\in X$, and $l\in Y$.
\end{definition}

The modular endomorphism operad of a vector space $V$ with a scalar product has the vector space $V^{\otimes n}$ as its component of genus $g$ and arity $n$. It is endowed with both the partial compositions and the contractions defined using the scalar product on~$V$. The homology groups $H_\bullet(\overline{\mathcal{M}}_{g,n})$ of the Deligne--Mumford--Knudsen moduli space of curves also form a modular operad. In the latter case, the partial compositions are given by the pushforwards of the maps $\rho$ and the contractions are given by the pushforwards of the maps $\sigma$.

The genus $0$ part of a modular operad is faithfully encoded into the following operad.

\begin{proposition}[\cite{GetKap98}] \label{prop:Op-WheeledOp}
Let $(\mathcal{P}_g(n), \circ_i^j, \xi_{ij})$ be a modular operad. The $\mathbb{S}$-module 
\begin{equation}
\mathcal{P}(n):= \mathcal{P}_0(n+1)\cong\mathcal{P}_0([n]),
\end{equation}
and the partial compositions $\circ_i := \circ_i^0 $
define a functor 
$$\mathsf{modular\  operads} \to \mathsf{operads}   \ ,$$
which sends the endomorphism modular operad $V^{\otimes n}$ to the endomorphism operad $\mathrm{End}_V$.
\end{proposition}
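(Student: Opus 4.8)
The plan is to observe that the genus-$0$ partial compositions $\circ_i^0$ never leave genus $0$ and never require the contraction maps $\xi_{ij}$, so that the substructure they generate is literally an operad; the two operad axioms will then be read off as the two cases of the modular associativity relation.

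First I would fix the relabelling convention. An element $\mu \in \mathcal{P}(m) = \mathcal{P}_0([m])$ is a genus-$0$ operation whose legs are indexed by $[m] = \{0, 1, \ldots, m\}$, and I single out the leg $0$ as the \emph{output}, the legs $1, \ldots, m$ being the \emph{inputs}. Given $\mu \in \mathcal{P}(m)$ and $\nu \in \mathcal{P}(n) = \mathcal{P}_0([n])$, the composite $\mu \circ_i \nu := \mu \circ_i^0 \nu$ glues the input $i$ of $\mu$ to the output $0$ of $\nu$; the genus and arity bookkeeping gives
\begin{equation*}
\circ_i^0 \colon \mathcal{P}_0(m+1) \otimes \mathcal{P}_0(n+1) \to \mathcal{P}_{0}(m+n) = \mathcal{P}(m+n-1),
\end{equation*}
so the target is the expected one. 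The surviving legs are relabelled by keeping $0$ as the output, moving the inputs $1, \ldots, n$ of $\nu$ into positions $i, \ldots, i+n-1$, and shifting the inputs $i+1, \ldots, m$ of $\mu$ upward accordingly; this is precisely the index convention that produces the shifts $i-1+j$ and $k-1+m$ appearing in the operad axioms.

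Next I would verify the two operad relations. The sequential axiom $(\mu \circ_i \nu) \circ_{i-1+j} \omega = \mu \circ_i (\nu \circ_j \omega)$ corresponds to grafting $\omega$ onto a leg belonging to $\nu$, hence is exactly the case $k\in Y$ of the modular associativity relation specialised to $j = l = 0$; the parallel axiom $(\mu \circ_i \nu) \circ_{k-1+m} \omega = (\mu \circ_k \omega) \circ_i \nu$ for $i < k$ corresponds to grafting onto two distinct inputs of $\mu$, i.e. the case $k\in X$. Equivariance of $\circ_i$ under the symmetric group follows from that of $\circ_i^j$ once one restricts permutations to those fixing the output label $0$, i.e. to the subgroup $\mathbb{S}_n \subset \mathbb{S}_{n+1}$ acting on $\mathcal{P}_0([n])$. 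For the unit I would take the genus-$0$ arity-$2$ element with which a modular operad is equipped in the sense of \cite{GetKap98}, viewed as an element of $\mathcal{P}(1) = \mathcal{P}_0(2)$, and deduce $\mathrm{I} \circ_1 \nu = \nu$ and $\mu \circ_i \mathrm{I} = \mu$ directly from its defining property. Functoriality is then immediate: a morphism of modular operads commutes with all $\circ_i^j$, hence with $\circ_i^0 = \circ_i$, so its genus-$0$ arity-$(n+1)$ components assemble into a morphism of the associated operads, and identities and composites are manifestly preserved.

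Finally, for the endomorphism example the modular operad has $\mathcal{P}_g(n) = V^{\otimes n}$, so $\mathcal{P}(n) = V^{\otimes(n+1)}$, and I would use the scalar product to identify $V^{\otimes(n+1)} \cong \Hom(V^{\otimes n}, V)$ by turning the $0$th tensor factor into the output and pairing the remaining factors against the inputs; under this identification the gluing $\circ_i^0$, which contracts along the scalar product, becomes the ordinary operadic composition of $\End_V$, and the modular unit corresponds to $\id_V$. The hard part, I expect, is not any individual verification but the careful and consistent bookkeeping of leg labels across the ``unrooted'' modular picture and the ``rooted'' operadic picture; once a single convention is fixed, every operad axiom collapses to a special case of a modular-operad axiom with the output legs frozen at $0$.
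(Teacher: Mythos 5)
Most of your proof is correct, and it fills in precisely what the paper suppresses: the paper gives no argument for this proposition at all (it is cited from Getzler--Kapranov), and the closest analogue, the proof of the wheeled version, just says the axiom-checking is straightforward. Your genus/arity bookkeeping for $\circ_i^0\colon \mathcal{P}_0(m+1)\otimes\mathcal{P}_0(n+1)\to\mathcal{P}_0(m+n)$, the identification of the sequential and parallel operad axioms with the two cases $k\in Y$ and $k\in X$ of the modular associativity relation specialised to $j=l=0$, the equivariance argument via permutations fixing the label $0$, the functoriality remark, and the endomorphism example via $V^{\otimes(n+1)}\cong\Hom(V^{\otimes n},V)$ are all fine.

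The genuine gap is your unit. A modular operad is \emph{not} equipped with a distinguished genus-$0$ arity-$2$ element: the paper's definition of modular operad mentions no unit whatsoever, and in \cite{GetKap98} the stability condition $2g-2+n>0$ forces the component $\mathcal{P}_0(2)$ to vanish, so the element you propose to use does not exist there even in principle. The motivating example makes this concrete: for $\mathcal{P}_g(n)=H_\bullet(\overline{\mathcal{M}}_{g,n})$ one has $\overline{\mathcal{M}}_{0,2}=\varnothing$, and the paper itself warns that the CohFT unit ``is not of an operadic nature (because of the stability condition, $\mathcal{M}_{0,2}=\varnothing$)''. So your deduction of $\mathrm{I}\circ_1\nu=\nu$ and $\mu\circ_i\mathrm{I}=\mu$ from the ``defining property'' of this element is vacuous. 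The correct reading of the proposition is that the functor lands in possibly non-unital operads (pseudo-operads), which is how the paper actually uses it for $H_\bullet(\overline{\mathcal{M}}_{0,n+1})$; a unit exists only when the modular operad happens to have a suitable component $\mathcal{P}_0(2)$, as for the endomorphism modular operad, where it is the Casimir element $\eta^{-1}\in V^{\otimes 2}=\mathcal{P}_0(2)$ corresponding to $\id_V$ under $V^{\otimes 2}\cong\Hom(V,V)$. Your argument is repaired by either dropping the unitality claim or formally adjoining a unit, but as written that step fails.
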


The genus $0$ operad $H_\bullet(\overline{\mathcal{M}}_{0,n})$ encodes hypercommutative algebras. It
contains the operad $Com\cong H_0(\overline{\mathcal{M}}_{0,n})$ encoding commutative algebras and it is Koszul dual to the genus $0$ operad $H_\bullet(\mathcal{{M}}_{0,n})$, see \cite{Getzler95}.

\begin{definition}[Wheeled operad \cite{MMS09}]
A datum of a \emph{wheeled operad} is 
 a pair of $\mathbb{S}$-modules, $\lbrace \mathcal{P}(1,n)\rbrace_{n\in \mathbb N}$ and $\lbrace \mathcal{P}(0,n)\rbrace_{n\in \mathbb N}$, endowed with equivariant \emph{partial compositions}
$$\circ_i \ : \ \mathcal{P}(\varepsilon, m)\otimes \mathcal{P}(1, n) \to  \mathcal{P}(\varepsilon, m+n-1), \quad \text{for} \ 1\leq i \leq m \ \text{and} \ 0\leq \varepsilon \leq 1    $$
\begin{center}
\includegraphics[scale=0.2]{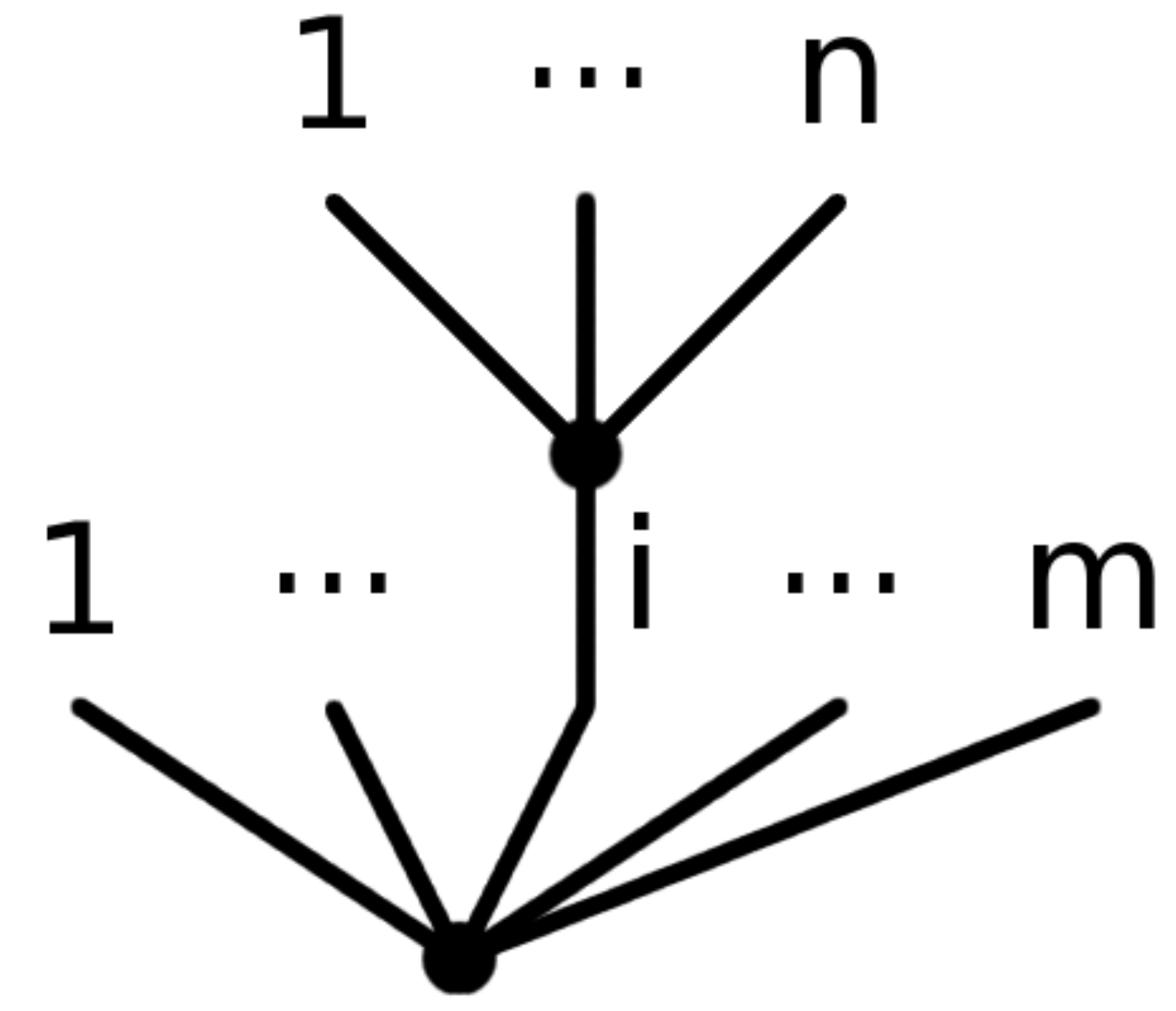} 
\end{center}
and equivariant \emph{wheel contractions}
$$\xi^i \ : \  \mathcal{P}(1, n) \to  \mathcal{P}(0, n-1), \quad \text{for} \ 1\leq i \leq n    \ . $$
\begin{center}
\includegraphics[scale=0.2]{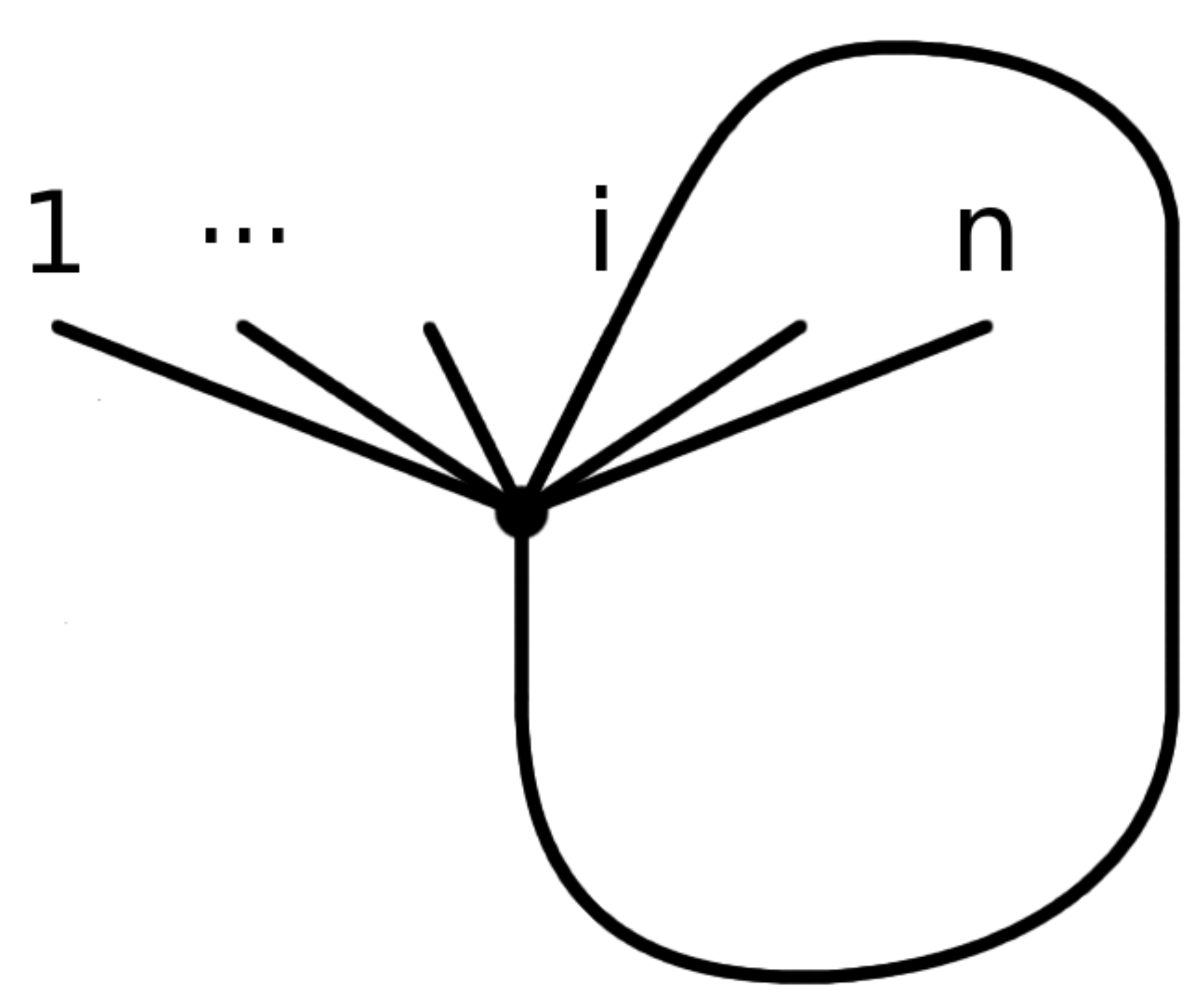} 
\end{center}
These structure maps are required to satisfy the following properties for every choice of $\mu \in \mathcal{P}(\varepsilon, X)$, $\nu\in \mathcal{P}(1, Y)$, and $\omega\in \mathcal{P}(1, Z)$:
\begin{displaymath}
(\mu \circ_i \nu)\circ_j \omega = \left\{ 
\begin{array}{ll}
 \mu \circ_i (\nu\circ_j\omega ), &\mathrm{when}\ j\in Y,  \\
 (\mu \circ_j\omega)\circ_{i}\nu , &\mathrm{when}\  j\in X, \\
\end{array}\right.
\end{displaymath}
for any $i\in X$;
\begin{displaymath}
\xi^{i}(\nu \circ_j \omega) = \left\{ 
\begin{array}{ll}
\xi^{i}(\nu) \circ_j \omega, &\mathrm{when}\ i\in Y,  \\
\xi^j(\omega \circ_i \nu), &\mathrm{when}\ i\in Z,  
\end{array}\right.
\end{displaymath}
for any $j\in Y$.
\end{definition}

The paradigm of wheeled operads is the endomorphism wheeled operad made up of the two components 
$\lbrace \mathrm{Hom}(V^{\otimes n}, V)\rbrace_{n \in \mathbb N}$ and 
$\lbrace \mathrm{Hom}(V^{\otimes n}, \mathbb Q)\rbrace_{n \in \mathbb N}$
associated to a vector space $V$ equipped with a \emph{trace}, that is a map $\mathrm{tr} \, : \, \mathrm{Hom}(V,V) \to \mathbb{C}$ satisfying $\mathrm{tr}([f,g])=0$.

The genera $0$  and $1$ parts of a modular operad are faithfully encoded into the following wheeled operad.

\begin{proposition}\label{prop:ModOp-WheeledOp}
Let $(\mathcal{P}_g(n), \circ_i^j, \xi_{ij})$ be  a modular operad. The pair of  $\mathbb{S}$-modules 
$$\mathcal{P}(1,n):= \mathcal{P}_0(n+1)\cong\mathcal{P}_0([n]), \quad \text{and} \quad 
\mathcal{P}(0,n):= \mathcal{P}_1(n)\cong\ \mathcal{P}_1(\underline{n}),$$
the partial compositions and the wheel contractions 
$\circ_i := \circ_i^0$ and $\xi^i:=\xi_{i0}$
define a functor 
$$\mathsf{modular\  operads} \to \mathsf{wheeled\  operads}   \ , $$
which sends the endomorphism modular operad $V^{\otimes n}$ to the endomorphism  wheeled  operad $\mathrm{End}_V$.
\end{proposition}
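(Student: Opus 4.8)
The plan is to mimic the genus~$0$ statement of Proposition~\ref{prop:Op-WheeledOp}, extending the bookkeeping to genus~$1$. Concretely, I would verify that the pair of $\mathbb{S}$-modules $\mathcal{P}(1,n)=\mathcal{P}_0([n])$ and $\mathcal{P}(0,n)=\mathcal{P}_1(\underline{n})$, equipped with $\circ_i:=\circ_i^0$ and $\xi^i:=\xi_{i0}$, satisfies the axioms of a wheeled operad, and that these assignments are natural in $\mathcal{P}$. The conceptual point that makes the statement work --- and that explains the shape of the definition of a wheeled operad --- is a genus count. In a partial composition the right-hand factor always lies in $\mathcal{P}(1,n)=\mathcal{P}_0([n])$, so applying $\circ_i^0$ to an element of genus $\varepsilon\in\{0,1\}$ yields an element of genus $\varepsilon+0=\varepsilon$; a wheel contraction $\xi_{i0}$, on the other hand, raises the genus by exactly one and thus sends genus~$0$ to genus~$1$. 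Hence the collection $\{\mathcal{P}_0([n]),\mathcal{P}_1(\underline{n})\}$ is closed under the chosen operations, and none of them leaves the range of genera $\{0,1\}$.

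First I would record that the structure maps have the correct domains and codomains: $\circ_i^0$ glues the $i$-th input of the first operation to the distinguished leg~$0$ of the second, so it maps $\mathcal{P}(\varepsilon,m)\otimes\mathcal{P}(1,n)$ to $\mathcal{P}(\varepsilon,m+n-1)$, while $\xi_{i0}$ contracts the $i$-th input against the leg~$0$ and maps $\mathcal{P}(1,n)$ to $\mathcal{P}(0,n-1)$. Equivariance under the appropriate symmetric groups is inherited from the equivariance built into the modular operad, once one observes that the $\mathbb{S}_n$-action on $\mathcal{P}(1,n)=\mathcal{P}_0([n])$ is the restriction, to the subgroup fixing the label~$0$, of the $\mathbb{S}_{n+1}$-action on $\mathcal{P}_0([n])$.

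The heart of the argument is to obtain the wheeled operad relations as special cases of the modular operad axioms. The two associativity relations for $\circ_i$ are exactly the two instances of the modular associativity axiom in which both composition labels equal the distinguished leg~$0$. The first wheel-contraction relation, $\xi^i(\nu\circ_j\omega)=\xi^i(\nu)\circ_j\omega$ for $i\in Y$, is the case of the modular contraction axiom in which the two contracted labels $i$ and $0$ both belong to the first factor. The remaining relation, $\xi^i(\nu\circ_j\omega)=\xi^j(\omega\circ_i\nu)$ for $i\in Z$, is the subtle one and the main obstacle: here the contracted labels straddle the composition, with $i$ lying in the second factor and $0$ in the first, so it is precisely the fourth case of the modular contraction axiom, the one that swaps the two factors. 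The only real work is to track these labels carefully through the identifications $\mathcal{P}(1,n)=\mathcal{P}_0([n])$, and I would do this once explicitly.

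Finally, for the assertion about endomorphism operads, I would unwind the scalar-product identifications $\mathcal{P}(1,n)=V^{\otimes(n+1)}\cong\Hom(V^{\otimes n},V)$ and $\mathcal{P}(0,n)=V^{\otimes n}\cong\Hom(V^{\otimes n},\mathbb{C})$, and check that under them $\circ_i^0$ becomes the composition of multilinear maps and $\xi_{i0}$ becomes the trace obtained by pairing the $i$-th input with the output. Combined with the evident functoriality --- a morphism of modular operads commutes with $\circ_i^0$ and $\xi_{i0}$ and therefore induces a morphism of the associated pairs of $\mathbb{S}$-modules --- this identifies the image of the endomorphism modular operad with the endomorphism wheeled operad $\End_V$ and finishes the proof.
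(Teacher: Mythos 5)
Your proposal is correct and follows essentially the same route as the paper, which simply asserts that the wheeled operad axioms are straightforward to verify and then uses the scalar-product identifications $V^{\otimes(n+1)}\cong\Hom(V^{\otimes n},V)$ and $V^{\otimes n}\cong\Hom(V^{\otimes n},\mathbb{C})$ for the endomorphism statement. Your write-up just makes explicit the genus bookkeeping and the matching of each wheeled axiom (including the factor-swapping contraction case) with the corresponding modular operad axiom, which the paper leaves to the reader.
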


\begin{proof}
It is straightforward to check the various axioms of a wheeled operad. Any scalar product $\eta$ of a vector space $V$ induces a trace. Using the scalar product $\eta$, one makes the identifications
 $$
V^{\otimes(n+1)}\cong \mathrm{Hom}(V^{\otimes n}, V) \text{ and } V^{\otimes n} \cong \mathrm{Hom}(V^{\otimes n}, \mathbb{C}), 
 $$
which proves the last statement. 
\end{proof}

Therefore, an algebra over a modular operad, that is a vector space $V$ together with the morphism of modular operads $\mathcal{P}\to \mathrm{End}_V$, induces, under the aforementioned functor, an algebra over the associated wheeled operad. The latter notion is more general than the former one: as opposed to modular operads, a wheeled operad can act on infinite dimensional vector spaces.

\smallskip

We conclude this section with a natural open question.

\begin{question}
Are the two wheeled operads, made up of the genera $0$ and $1$ parts of the modular operads $H_\bullet(\overline{\mathcal{M}}_{g,n})$ and of  $H_\bullet({\mathcal{M}}_{g,n})$, Koszul dual to one another? Are they Koszul wheeled operads? 
\end{question}

\subsection{Cohomological Field Theories}

Roughly speaking, a cohomological field theory on a graded vector space~$V$ is a system of cohomology classes on the moduli spaces of curves with values in tensor powers of $V$ compatible with all natural mappings between the moduli spaces. Expressed formally, this means that a cohomological field theory is a representation of the modular operad formed by the cohomology algebras of moduli spaces of curves. Let us use the previous section to make this precise. To conform with the usual conventions, we use cohomology classes: even though a na\"ive translation of what was just said suggests to think of a cohomological field theory as of a collection of elements $a_{g,n}\colon\Hom(H_\bullet(\overline{\calM}_{g,n}), V^{\otimes n})$, we replace the dual space to the homology by the cohomology, and let our elements belong to $H^\bullet(\overline{\calM}_{g,n})\otimes V^{\otimes n}$.

\begin{definition}[CohFT]
Given a graded vector space $V$ with a basis $\{e_1,\dots, e_s\}$ ($e_1$ plays the role of a unit) with a scalar product $\eta$, a \emph{cohomological field theory} (CohFT) on~$V$ is defined as a system of classes $\alpha_{g,n}\in H^\bullet(\overline{\calM}_{g,n})\otimes V^{\otimes n}$ satisfying the following properties: 
\begin{itemize}
\item[$\diamond$] the classes $\alpha_{g,n}$ is equivariant with respect to the actions of the symmetric group $\mathbb{S}_n$ on the labels of marked points and on the factors of $V^{\otimes n}$.
\item[$\diamond$] the pullbacks via the natural mappings $\sigma$ and $\rho$ correspond to the pairings with $\eta$ of the factors in tensor powers corresponding to the points in the preimage of the node:
\begin{gather}
\sigma^*\alpha_{g,n}=\left(\alpha_{g-1,n+2},\eta^{-1}\right), \\ 
\rho^*\alpha_{g,n}=\left(\alpha_{g_1,n_1+1}\otimes\alpha_{g_2,n_2+1},\eta^{-1}\right). 
\end{gather}
\item[$\diamond$] the unital conditions for the element $e_1$:
\begin{gather}
\left(\alpha_{0,3},e_1\otimes e_i\otimes e_j\right)=\eta(e_i,e_j),\\
\pi^*\alpha_{g,n}=\eta^{(n+1)}\left(\alpha_{g,n+1}, e_1\right)
\end{gather}
(the superscript in $\eta^{(n+1)}$ refers to the fact that we use $\eta$ to compute the scalar product of $e_1$ with the factor of $V^{\otimes n}$ corresponding to the last marked point).
\end{itemize}  
\end{definition}

A \emph{topological field theory} (TFT) is a special case of a cohomological field theory for which all the classes $\alpha_{g,n}$ are of homological degree~$0$, that is belong to $H^0(\overline{\calM}_{g,n})\otimes V^{\otimes n}$. One can easily show that in this case the whole system $\{\alpha_{g,n}\}$ is determined by the class $\alpha_{0,3}\in V^{\otimes 3}$. A CohFT is a TFT if and only if $\alpha_{0,3}$ determines a structure of a unital commutative Frobenius algebra on the vector space $V$, see~\cite{Dub96}.

The definition of a CohFT and thus of a TFT includes an element $e_1$ playing the role of a unit of~$V$. However, this unit is not of an operadic nature (because of the stability condition, $\mathcal{M}_{0,2}=\varnothing$).

Throughout this paper, we shall consider possibly infinite dimensional genus $0$ (respectively genera $0$ and $1$) CohFTs without a unit as algebras over the operad (respectively wheeled operad) associated to the modular operad $H^\bullet(\overline{\mathcal{M}}_{g,n})$. For practical purposes, this means that a genus~$0$ CohFT is encoded by a collection of classes $$\alpha_n:=\alpha_{0,n+1}\in H^\bullet(\overline{\calM}_{0,n+1})\otimes\cEnd_V(n),$$ and a CohFT in the genera $0$ and~$1$ is encoded by a collection of classes 
 $$
\alpha_n=\alpha_{0,n+1}\in H^\bullet(\overline{\calM}_{0,n+1})\otimes\cEnd_V(n)\quad\text{and}\quad\beta_n=\alpha_{1,n}\in H^\bullet(\overline{\calM}_{1,n})\otimes \Hom(V^{\otimes n},\mathbb{C}).
 $$ 
Observe our choice of notation: the class $\alpha_n=\alpha_{0,n+1}$ represents an $n$-ary operation; it is a cohomology class of the moduli space of curves with $n+1$ marked points, since we need $n$ inputs and one output. In the same spirit, $\beta_n=\alpha_{1,n}$ represents an operation with $n$ inputs and 
no outputs.

\subsection{Correlators}\label{sec:correlators}

Using the $\psi$-classes, it is possible to extract certain numerical information from a cohomological field theory. 
\begin{definition}[Correlators]
Let $d_1,\ldots,d_n$ be a sequence of nonnegative integers. The \emph{correlator} $\langle\tau_{d_1}\cdots\tau_{d_n}\rangle_{g,V}$ associated to this sequence is defined by the formula
\begin{equation}\label{correlator-operations}
\langle\tau_{d_1}\cdots\tau_{d_n}\rangle_{g,V}:=\int_{\overline{\calM}_{g,n}}\alpha_{g,n}\cdot\prod_{j=0}^n\psi_j^{d_j}\in V^{\otimes n}. 
\end{equation}
\end{definition}

In the computations below, we shall express the correlators as particular elements of $\End_V$ with correlators for $V=\mathbb{C}$ as coefficients. The latter correlators are just numbers denoted by $\langle\tau_{d_1}\cdots\tau_{d_n}\rangle_g$. For a genus zero topological field theory, it is known that 
\begin{equation}\label{correlator-numbers}
\langle\tau_{d_1}\cdots\tau_{d_n}\rangle_0=
\left\{
\begin{aligned}
\frac{(n-3)!}{d_1!\cdots d_n!}&  \quad \text{ if }d_1+\cdots+d_n=n-3,  \\
 0 \phantom{n-3}&\quad \text{ otherwise.}
\end{aligned}
\right.
\end{equation}
By a standard argument, see for instance~\cite{Sha09}, the correlators contain all the information necessary to compute the restriction of a cohomological field theory on the tautological rings of the moduli spaces.

One of the key technical tools in our proofs will be the following relations, see for instance~\cite{Manin}.

\begin{proposition}[Topological recursion relations] The following relations between the correlators hold:
\begin{itemize}
 \item[$\diamond$] the genus zero relations at the points $i,j,k$
\begin{equation}\label{TRR-0}
\langle\tau_{d_1}\tau_{d_2}\cdots\tau_{d_i+1}\cdots\tau_{d_n}\rangle_0=
\sum_{\substack{I\sqcup J=\underline{n}\\ i\in I; \, j,k\in J}}
\langle\tau_{d_I}\tau_0\rangle_0\langle\tau_0\tau_{d_J}\rangle_0.
\end{equation}
 \item[$\diamond$] a more symmetric version of the genus zero relations at the points $i,j$ that follows immediately from the previous ones 
\begin{equation}\label{TRR-0-sym}
\langle\tau_{d_1}\tau_{d_2}\cdots\tau_{d_i+1}\cdots\tau_{d_n}\rangle_0+
\langle\tau_{d_1}\tau_{d_2}\cdots\tau_{d_j+1}\cdots\tau_{d_n}\rangle_0=
\sum_{\substack{I\sqcup J=\underline{n}\\ i\in I, \, j\in J}}
\langle\tau_{d_I}\tau_0\rangle_0\langle\tau_0\tau_{d_J}\rangle_0,
\end{equation}
 \item[$\diamond$] the genus~$1$ relations at the point~$i$
\begin{equation}\label{TRR-1}
\langle\tau_{d_1}\tau_{d_2}\cdots\tau_{d_i+1}\cdots\tau_{d_n}\rangle_1=
\sum_{\substack{I\sqcup J=\underline{n}, \\ i\in I}}
\langle\tau_{d_I}\tau_0\rangle_0\langle\tau_0\tau_{d_J}\rangle_1+
\frac{1}{24}\langle\tau_{d_1}\tau_{d_2}\cdots\tau_{d_i}\cdots\tau_{d_n}\tau_0^2\rangle_0.
\end{equation}
\end{itemize}
\end{proposition}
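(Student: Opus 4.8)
The plan is to derive all three identities from geometric relations in the cohomology rings of the moduli spaces $\overline{\calM}_{g,n}$ that express a $\psi$-class as a combination of boundary divisor classes, and then to integrate against the monomial $\prod_{j}\psi_j^{d_j}$. For $V=\mathbb{C}$ the correlators are simply the $\psi$-intersection numbers $\langle\tau_{d_1}\cdots\tau_{d_n}\rangle_g=\int_{\overline{\calM}_{g,n}}\psi_1^{d_1}\cdots\psi_n^{d_n}$, so the whole statement reduces to intersection theory. The single input I would isolate first is the factorization of such integrals along a boundary divisor: on a divisor of the form $\overline{\calM}_{g_1,I\cup\{*\}}\times\overline{\calM}_{g_2,J\cup\{*\}}$ a $\psi$-class attached to a marked point restricts to the $\psi$-class of that point on the corresponding factor, so the integral splits as a product of two correlators in which the two branches $*$ of the node each carry $\psi^0$ and hence contribute a factor $\tau_0$.

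For the genus $0$ relation~\eqref{TRR-0} I would use the classical identity that, on $\overline{\calM}_{0,n}$ and for any three distinct marked points $i,j,k$,
\[
\psi_i=\sum_{\substack{I\sqcup J=\underline{n}\\ i\in I;\ j,k\in J}}\delta_{I,J},
\]
where $\delta_{I,J}$ is the class of the boundary divisor whose generic member is a two-component rational curve carrying the points of $I$ and of $J$ respectively. This identity is standard (see~\cite{Manin}) and follows from the relation on $\overline{\calM}_{0,4}\cong\mathbb{P}^1$ together with the comparison of $\psi$-classes under the forgetful maps. Multiplying by $\prod_j\psi_j^{d_j}$, integrating over $\overline{\calM}_{0,n}$, and applying the factorization above to each $\delta_{I,J}$ produces exactly the right-hand side of~\eqref{TRR-0}; the stability constraint $|J|\ge 2$ is automatic since $j,k\in J$, while a one-element $I=\{i\}$ contributes a correlator over the empty space $\overline{\calM}_{0,2}$ and drops out.

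The symmetric version~\eqref{TRR-0-sym} then follows immediately, as claimed: apply~\eqref{TRR-0} once with the ordered triple $(i;j,k)$ and once with $(j;i,k)$ and add the two equalities. In the first sum the auxiliary point $k$ lies on the side of $j$, in the second it lies on the side of $i$; since $k$ must lie on one side or the other, the two sums together range over all partitions $I\sqcup J=\underline{n}$ separating $i$ from $j$, which is precisely the right-hand side of~\eqref{TRR-0-sym}.

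For the genus~$1$ relation~\eqref{TRR-1} I would use the analogous expression on $\overline{\calM}_{1,n}$,
\[
\psi_i=\frac{1}{24}\,\delta_{\mathrm{irr}}+\sum_{\substack{I\sqcup J=\underline{n}\\ i\in I}}\delta_{0,I\,|\,1,J},
\]
where $\delta_{0,I\,|\,1,J}=\overline{\calM}_{0,I\cup\{*\}}\times\overline{\calM}_{1,J\cup\{*\}}$ and $\delta_{\mathrm{irr}}$ is the class of the irreducible nodal locus, the image of the gluing map $\sigma\colon\overline{\calM}_{0,n+2}\to\overline{\calM}_{1,n}$. Integrating against $\prod_j\psi_j^{d_j}$, the reducible divisors give $\sum_{i\in I}\langle\tau_{d_I}\tau_0\rangle_0\langle\tau_0\tau_{d_J}\rangle_1$ by the factorization above, while the $\delta_{\mathrm{irr}}$ term pulls back through $\sigma$ to a genus~$0$ integral over $\overline{\calM}_{0,n+2}$ whose two extra marked points are the preimages of the node and carry $\psi^0$, yielding $\frac{1}{24}\langle\tau_{d_1}\cdots\tau_{d_n}\tau_0^2\rangle_0$. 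The main obstacle is precisely the genus~$1$ identity, and above all the coefficient $\frac{1}{24}$: it traces back to the base computation $\int_{\overline{\calM}_{1,1}}\psi_1=\frac{1}{24}$ and must be propagated carefully through the orbifold structure and the fact that $\sigma$ is a $2$-to-$1$ self-gluing map whose two node branches are interchangeable, whereas the genus~$0$ steps are a clean consequence of the $\mathbb{P}^1$-relation and the factorization of $\psi$-monomials on boundary divisors.
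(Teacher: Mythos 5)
Your proposal is correct, but the comparison here is lopsided: the paper does not prove this proposition at all --- it states these relations as standard facts, with the pointer ``see for instance~\cite{Manin}''. What you have written out is essentially the standard derivation that this citation stands for: express one factor of $\psi_i$ as a sum of boundary divisor classes (the $\overline{\calM}_{0,4}$-relation propagated by forgetful maps in genus $0$; pullback from $\overline{\calM}_{1,1}$ plus the divisors $\delta_{0,I|1,J}$ in genus $1$), then multiply by the remaining $\psi$-monomial and factorize the integral along each boundary divisor, the node branches contributing the $\tau_0$-insertions. Your deduction of \eqref{TRR-0-sym} by adding the instances of \eqref{TRR-0} for $(i;j,k)$ and $(j;i,k)$ and observing that the auxiliary point $k$ must lie on one side or the other is exactly the intended ``follows immediately'' argument. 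The one point to tighten is the genus-$1$ normalisation, which you flag yourself but leave unresolved: as written, your two displayed steps implicitly use two different conventions for $\delta_{\mathrm{irr}}$. If $\delta_{\mathrm{irr}}$ denotes the class of the reduced image of $\sigma$, then the divisor relation reads $\psi_i=\tfrac{1}{12}\delta_{\mathrm{irr}}+\sum\delta_{0,I|1,J}$ (on $\overline{\calM}_{1,1}$ one has $\int_{\overline{\calM}_{1,1}}\psi_1=\tfrac{1}{24}$, while the boundary point carries an order-$2$ automorphism, so its class integrates to $\tfrac{1}{2}$), and the integral over $\delta_{\mathrm{irr}}$ then equals $\tfrac{1}{2}\langle\tau_{d_1}\cdots\tau_{d_n}\tau_0^2\rangle_0$ because $\sigma$ is $2$-to-$1$ onto its image; if instead one sets $\delta_{\mathrm{irr}}:=\sigma_*(1)$, then both of your formulas are literally correct as they stand. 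Either consistent choice produces the coefficient $\tfrac{1}{24}$ in \eqref{TRR-1}, so this is a matter of bookkeeping rather than a genuine gap.
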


\subsection{Givental group action}

In this section, we describe a part of the Givental theory of a certain group action on CohFTs~\cite{Giv01,Giv04}.

\begin{definition}[Givental group and Lie algebra]
Let $V$ be a vector space with a scalar product $\eta$ as above. The group that plays the key role in Givental's construction is the (upper triangular group of the) group of symplectomorphisms of Laurent series with coefficients in~$V$. It is the group of formal power series with coefficients in the space of endomorphisms of~$V$ consisting of all series $R(z)=Id+R_1z+R_2z^2+\dots$ satisfying $R^*(-z)R(z)=Id$. Its Lie algebra consists of all series $r_1z+r_2z^2+\dots$, where $r_l\in\End(V)$ is symmetric for odd $l$ and skew-symmetric for even $l$ (with respect to the scalar product $\eta$).
\end{definition}

Following~\cite{Kaz07,Teleman}, we associate to an element $\sum_{l=1}^{\infty}r_lz^l$ as above an infinitesimal deformation of CohFT.

\begin{definition}[Givental Lie algebra action]
The Givental Lie algebra action on cohomological field theories takes the system of classes $\alpha_{g,n}\in H^\bullet(\overline{\calM}_{g,n})\otimes V^{\otimes n}$ to the system of classes $(\widehat{r_lz^l}.\alpha)_{g,n}\in H^\bullet(\overline{\calM}_{g,n})\otimes V^{\otimes n}$ given by the formula 
\begin{multline}\label{Givental-general}
(\widehat{r_lz^l}.\alpha)_{g,n}:=
-\left(\pi_*(\alpha_{g,n+1}\cdot\psi_{n+1}^{l+1}),r_l(e_1)\right)+\sum_{m=1}^n(\alpha_{g,n}\cdot\psi_m^l)\circ_m r_l+\\+
\frac12\sum_{i=0}^{l-1}(-1)^{i+1}\left(\sigma_*(\alpha_{g-1,n+2}\cdot\psi_{n+1}^i\psi_{n+2}^{l-1-i}),\eta^{-1}r_l\right)+\\+
\frac12\sum_{\substack{I\sqcup J=\underline{n},\\ i+j=l-1,\\ g_1+g_2=g}}(-1)^{j+1}
\left(\rho_*(\alpha_{g_1,|I|+1}\cdot\psi_{|I|+1}^i\otimes\alpha_{g_2,|J|+1}\cdot\psi_{|J|+1}^j),\eta^{-1}r_l\right).
\end{multline}
In this formula, we assume that the points in the preimage of the node are the points labelled $n+1$ and $n+2$ in the second sum, and the points $|I|+1$ on $\overline{\calM}_{g_1+1,|I|+1}$ and $|J|+1$ on $\overline{\calM}_{g_2+1,|J|+1}$ in the third sum. 
\end{definition}

\begin{proposition}[\cite{Kaz07,Teleman}]
The classes  
\begin{equation}
\tilde{\alpha}_{g,n}:=\left(\exp\left(\sum_{l=1}^\infty \widehat{r_lz^l}\right).\alpha\right)_{g,n} 
\end{equation}
are well-defined cohomology classes with the values in the tensor powers of~$V$ that define a CohFT. 
\end{proposition}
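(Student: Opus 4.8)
The plan is to prove the two assertions of the proposition separately: that each class $\tilde{\alpha}_{g,n}$ is well-defined, and that the resulting system satisfies the CohFT axioms. I read $\exp\bigl(\sum_l\widehat{r_lz^l}\bigr)$ as the time-$1$ flow of the vector field on the space of CohFTs defined by \eqref{Givental-general}.

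\emph{Well-definedness.} First I would observe that each operator $\widehat{r_lz^l}$ raises the cohomological degree of the classes by exactly $2l$. Indeed, in all four terms of \eqref{Givental-general} the inserted $\psi$-monomials contribute $2l$ once one accounts for the degree shift of $\pi_*$ (which lowers degree by $2$) and of $\sigma_*$, $\rho_*$ (which raise it by $2$): the first term gives $2(l+1)-2$, the second gives $2l$, and the last two give $2(l-1)+2$. Since $H^\bullet(\overline{\calM}_{g,n})$ vanishes above the real dimension $2(3g-3+n)$, iterating the vector field $X:=\sum_l\widehat{r_lz^l}$ strictly raises the cohomological degree by at least $2$ each time, so on each fixed component $(g,n)$ at most $3g-3+n$ iterations survive. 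Hence the flow reduces to a finite polynomial expression in the $r_l$, and $\tilde{\alpha}_{g,n}$ is well-defined. In particular $\tilde{\alpha}_{0,3}=\alpha_{0,3}$, because $\overline{\calM}_{0,3}$ is a point and the operators would produce classes of positive degree there.

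\emph{Reduction to a tangency statement.} The system $\{\alpha_{g,n}\}$ is a CohFT precisely when it satisfies the symmetry constraint, the two gluing constraints $\sigma^*\alpha_{g,n}=(\alpha_{g-1,n+2},\eta^{-1})$ and $\rho^*\alpha_{g,n}=(\alpha_{g_1,n_1+1}\otimes\alpha_{g_2,n_2+1},\eta^{-1})$, and the two unital constraints; these are polynomial equations in $\alpha$ (the $\sigma$-relation linear, the $\rho$-relation bilinear). I would show that $X$ is tangent to their common zero locus, i.e.\ that whenever $\alpha$ is a CohFT the class $X.\alpha$ satisfies the linearisation of each constraint at $\alpha$. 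Since a polynomial vector field tangent to the zero locus of polynomial equations has a flow preserving that locus, and since the flow is the finite expression found above, this gives that $\tilde{\alpha}$ is again a CohFT. Equivariance is immediate because $\pi_*$, $\sigma_*$, $\rho_*$ and multiplication by the symmetrised $\psi$-monomials are all $\mathbb{S}_n$-equivariant, while the normalisation $(\alpha_{0,3},e_1\otimes e_i\otimes e_j)=\eta(e_i,e_j)$ is preserved because $\tilde{\alpha}_{0,3}=\alpha_{0,3}$.

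\emph{The gluing axioms.} The core of the argument is to verify the linearised gluing relations
\[
\sigma^*(X.\alpha)_{g,n}=\bigl((X.\alpha)_{g-1,n+2},\eta^{-1}\bigr),
\]
\[
\rho^*(X.\alpha)_{g,n}=\bigl((X.\alpha)_{g_1,n_1+1}\otimes\alpha_{g_2,n_2+1}+\alpha_{g_1,n_1+1}\otimes(X.\alpha)_{g_2,n_2+1},\eta^{-1}\bigr),
\]
the second being the Leibniz rule encoded in the bilinear $\rho_*$-term of \eqref{Givental-general}. I would compute the left-hand sides by restricting each of the four terms of $X.\alpha$ to the relevant boundary divisor, using the base-change and projection formulas for the natural maps together with the standard $\psi$-class comparisons: under a forgetful map $\pi^*\psi_j=\psi_j-D_{j,n+1}$, while the restriction of $\psi$-classes to a boundary divisor is governed by the first Chern class $-\psi'-\psi''$ of its normal bundle. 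Reorganising the resulting terms according to which side of the node carries the forgotten point or the newly created node, and matching the signs $(-1)^{i+1}$, $(-1)^{j+1}$ against the symmetry (odd $l$) or skew-symmetry (even $l$) of $r_l$ relative to $\eta$, the terms recombine into the desired right-hand sides. The unital relation $\pi^*\alpha_{g,n}=\eta^{(n+1)}(\alpha_{g,n+1},e_1)$ is checked in the same spirit, using the forgetful comparison above and the appearance of $r_l(e_1)$ in the first term of \eqref{Givental-general}.

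\emph{The main obstacle.} I expect the hardest step to be the $\rho$-Leibniz relation. The difficulty is purely combinatorial-geometric: pulling $X.\alpha$ back to a boundary divisor, the forgotten marked point of the $\pi_*$-term may specialise to either side of the node, while the extra node produced by the $\sigma_*$- and $\rho_*$-terms may stay on one component or collide with the node along which one restricts, producing boundary-of-boundary contributions controlled by the $\psi$-comparison formulas. Tracking these degenerations, the summation ranges over $I\sqcup J$ and $g_1+g_2=g$, and the Koszul and $(-1)^i$ signs so that all spurious terms cancel and exactly the Leibniz combination survives is the essential bookkeeping. The cleanest organisation, following Kazarian and Teleman, is to present $X.\alpha$ and its flow as a sum of local contributions at the vertices and edges of stable graphs: restriction to a boundary stratum then factorises the graph sum along that stratum, making the gluing axioms structurally transparent and reducing the remaining numerical identities to the topological recursion relations \eqref{TRR-0}--\eqref{TRR-1} among correlators.
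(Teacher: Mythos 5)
The first thing to note is that the paper does not prove this proposition at all: it is stated as imported background, with the proof deferred to Kazarian~\cite{Kaz07} and Teleman~\cite{Teleman}. So there is no internal proof to compare your attempt against; what can be judged is whether your outline would itself constitute a proof. Your overall strategy --- show the action is nilpotent on each $(g,n)$-component so the exponential is a finite sum, then prove infinitesimal invariance of the CohFT constraints and integrate --- is exactly the strategy of the cited references. Your well-definedness argument is complete and correct: each of the four terms of \eqref{Givental-general} raises cohomological degree by exactly $2l$, the cohomology of $\overline{\calM}_{g,n}$ is bounded by $2(3g-3+n)$, and in particular $\tilde{\alpha}_{0,3}=\alpha_{0,3}$, which also disposes of the normalisation axiom.

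As a proof, however, the text has a genuine gap: the entire mathematical content of the proposition sits in the verification that $X.\alpha$ satisfies the linearised $\sigma$-, $\rho$- and unit constraints, and that step is only described, never carried out. You list the correct tools (base change, projection formula, $\pi^*\psi_j=\psi_j-D_{j,n+1}$, restriction of $\psi$-classes to boundary divisors, matching the signs $(-1)^{i+1}$ against the (skew-)symmetry of $r_l$), and you correctly identify the $\rho$-Leibniz relation as the hard case, but ``tracking these degenerations \dots is the essential bookkeeping'' states the task rather than performing it. There is also a precision problem in the integration step: a polynomial vector field that is pointwise tangent to the zero locus of polynomial equations need \emph{not} have a flow preserving that locus when the locus is singular or the equations degenerate --- for instance, every vector field in the plane satisfies the linearised equation of $x^2=0$ at every point of its zero locus, yet $\partial_x$ does not preserve the axis. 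What you actually need, and what the boundary-restriction computation naturally yields if carried out for an \emph{arbitrary} system of classes $\beta$ rather than only for a CohFT, is that $X$ preserves the ideal generated by the constraint functions: the defect of each constraint evaluated on $X.\beta$ must be exhibited as a linear combination (with $\beta$-dependent coefficients) of the defects evaluated on $\beta$. Then the constraints along the flow --- which is polynomial in $t$ by your nilpotence argument --- satisfy a linear ODE with zero initial condition and hence vanish identically. With that restatement your plan is the right one, but as written the proposal is an outline of the literature's proof, not a proof.
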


If one chooses to ignore the unit $e_1$, it is possible: the exact same formulae with the summand containing $e_1$ omitted provide a well defined Lie algebra action on CohFTs, see for example~\cite{KMS11}.
We shall be using Formula~\eqref{Givental-general} only in the cases of genera~$0$ and~$1$ and without a unit, as follows. To match the notation of the subsequent sections, we shall write the formula for the action of the operator $D_lz^{l-1}$,  $l\ge 1$. Note that Formula~\eqref{Givental-general} makes sense even for $l=0$, in which case we get the usual action on the Lie algebra $\End(V)$ by the Leibniz rule.

In the case of genus~$0$, we have the action
\begin{equation}
\widehat{D_lz^{l-1}}.\{\alpha\}=\{\alpha'\},
\end{equation}
and the general formula~\eqref{Givental-general} simplifies to
\begin{multline}\label{Givental-action-genus-0}
\alpha'_n=
(-1)^lD_l\circ_1\alpha_{n}\cdot\psi_0^{l-1}+\sum_{m=1}^n\alpha_{n}\cdot\psi_m^{l-1}\circ_m D_l+\\+
\sum_{\substack{I\sqcup J=\underline{n}, |I|\ge2,\\ i+j=l-2}}(-1)^{i+1}(\alpha_{|J|+1}\cdot\psi_1^j) \tilde{\circ}_1  (D_l \circ_1\alpha_{|I|}\cdot \psi_0^i).
\end{multline}
Here we assume that the output of every operadic element corresponds to the marked point $x_0$ on the curve, and that, in the last sum, the points in the preimage of the node are the point $x_0$ on the curve with $|I|+1$ marked points and the point $x_1$ on the curve with $|J|+2$ marked points. The operation $\tilde{\circ}_1$ in the last sum refers to using the push-forward $\rho_*$ on the cohomology and, simultaneously, the composition $\circ_1$ in the endomorphism operad. Because of our assumption on the (skew-)symmetry of components of power series, the first summand acquires the sign $(-1)^l$: our translation into the operadic language requires us to identify $V$ with its dual using the scalar product $\eta$, so we have to replace the operator $D_l$ acting on the output by its adjoint. After that replacement, the assumptions on the (skew-)symmetry of components of power series do not need to be used anymore: any element of $z\End(V)[[z]]$ provides a well defined infinitesimal deformation of a given CohFT,
 viewed as an algebra over the corresponding operad~\cite{KMS11,SZ}.

In the case of genus~$1$, we have the action
\begin{equation}
\widehat{D_lz^{l-1}}.\{\alpha,\beta\}=\{\alpha',\beta'\}.
\end{equation}
The classes $\alpha'_n$ are defined by the genus zero formula~\eqref{Givental-action-genus-0}. For the $\beta$-classes, we can simplify the general formula~\eqref{Givental-general} to
\begin{multline}\label{Givental-action-genus-1}
\beta'_n=
\sum_{m=1}^n\beta_n\cdot\psi_m^{l-1}\circ_mD_l+
\sum_{\substack{I\sqcup J=\underline{n}, |I|\ge2,\\ i+j=l-2}}(-1)^{i+1}(\beta_{|J|+1}\cdot\psi_1^j) \tilde{\circ}_1  (D_l\circ_1\alpha_{|I|}\cdot\psi_0^i)+\\+
\frac12\sum_{i+j=l-2}(-1)^{i+1}\tilde{\xi}_{n+1}(\alpha_{n+1}\cdot\psi_0^i\cdot\psi_{n+1}^j\circ_{n+1} D_l).
\end{multline}
Here we assume that the points in the preimage of the node are the point $x_0$ on the curve of genus~$0$ and the point $x_1$ on the curve of genus~$1$ in the second sum, and the points $x_0,x_{n+1}$ in the third sum. The operation $\tilde{\circ}_1$ in the second sum refers to using the push-forward $\rho_*$ on the cohomology and, simultaneously, the composition $\circ_1$ in the wheeled endomorphism operad. The operation $\tilde{\xi}_{n+1}$ in the last sum refers to using the push-forward $\sigma_*$ on the cohomology and, simultaneously, the contraction $\xi_{n+1}$ in the endomorphism operad.  As above, once we pass to wheeled operads, we may drop the assumptions on components of power series: any element of $z\End(V)[[z]]$ provides a well defined infinitesimal deformation of a given CohFT, viewed as an algebra over the corresponding wheeled operad.

\section{Differential operators}\label{sec:DifOp}

In this section, we recall the definition and the properties of differential operators on commutative algebras, and introduce a new notion of compatibility with the trace. Several new results are proved, both for classical differential operators and for operators compatible with the trace.

\begin{definition}[Order of an operator~\cite{Koszul85}]
Let $D$ be a linear map on a commutative algebra $V$. Let us define the \emph{Koszul bracket hierarchy} 
\begin{equation}
\langle-,-,\ldots,-\rangle_l^D\colon V^{\otimes l}\to V
\end{equation}
by $\langle f\rangle_1^D:=D(f)$ and, recursively, by
\begin{multline}\label{koszul-recursive}
\langle f_1,\ldots,f_{l-1},f_l,f_{l+1}\rangle_{l+1}^D=\\ 
 \langle f_1,\ldots,f_{l-1}, f_lf_{l+1}\rangle_l^D
-\langle f_1,\ldots,f_{l-1},f_l\rangle_l^D f_{l+1}
-f_l\langle f_1,\ldots,f_{l-1},f_{l+1}\rangle_l^D.
\end{multline}
The operator $D$ is said to be a \emph{differential operator on~$V$ of order at most~$l$} if the bracket $\langle-,-,\ldots,-\rangle_{l+1}^D$ is identically equal to zero. 
\end{definition}

We denote by $D\in\Diff_{\le l}(V)$ the set of all differential operators of order at most~$l$, and by $\Diff(V)$ the set of differential operators of all possible orders: 
\begin{equation}
\Diff(V):=\bigcup_{l\ge0}\Diff_{\le l}(V).
\end{equation}
The composition of differential operators makes $\Diff(V)$ an associative algebra. This algebra is filtered by the order of operators: 
\begin{equation}\label{filtered-algebra}
\Diff_{\le k}(V)\circ\Diff_{\le l}(V)\subset\Diff_{\le k+l}(V). 
\end{equation}

There are many definitions of differential operators on commutative associative algebras that can be found in the literature. The oldest and the most known one is probably due to Grothendieck \cite{Grothendieck67}, requiring, for an operator of order at most~$l$, that
\begin{equation}
[[\ldots[[D,f_1\cdot(-)],f_2\cdot(-)],\ldots],f_{l+1}\cdot(-)]=0, 
\end{equation}
where each $f_i\cdot(-)$ stands for the operator $g\mapsto f_i\cdot g$. Later, other definitions have appeared, see for example~\cite{Koszul85,Akman97}. For algebras with a unit, and operators which annihilate the unit, all these definitions are equivalent to each other, see~\cite{AkmanIonescu08}.

It is easy to see that the above recursive definition of the Koszul hierarchy results in the following explicit formula for the higher brackets:
\begin{equation}
 \langle f_1,\ldots,f_{l-1}, f_l\rangle_l^D=
\sum_{\substack{
I\subset\underline{l},\\
|I|\ge1}}
(-1)^{l-|I|} D(f_I)
\cdot f_{\underline{l}\setminus I}.
\end{equation}
From this formula, it is clear that the Koszul brackets are graded symmetric, and the condition of being a differential operator of order at most~$l$ becomes
\begin{equation}\label{order-l-definition-1}
D(f_1f_2\cdots f_{l+1})=\sum_{\substack{I\subset\underline{l+1},\\ 1\le|I|\le l}}(-1)^{l-|I|}D(f_I)f_{\underline{l+1}\setminus I}.
\end{equation}

From calculus, it is well known that to compute the derivative (respectively the second derivative) of a product of~$n$ factors, it is enough to know the derivative of each factor (respectively the second derivative of each factor and each product of two factors):
\begin{gather}
(f_1f_2\cdots f_n)'=\sum_{1\le i\le n} f_i'f_{\underline{n}\setminus \{i\}}, \label{order-1-identity}\\
(f_1f_2\cdots f_n)''=\sum_{1\le i<j\le n} (f_if_j)''f_{\underline{n}\setminus \{i,j\}}-(n-2)\sum_{1\le i\le n} f_i''f_{\underline{n}\setminus \{i\}}. \label{order-2-identity}
\end{gather}
We shall need the following generalisation of these formulae for any commutative algebra and any order of an operator.
\begin{proposition}\label{identities-diff}
For a differential operator~$D$ of order at most~$l$, and for every $n\ge l+1$ we have
\begin{equation}\label{order-l-identity}
D(f_1f_2\cdots f_n)=\sum_{\substack{I\subset\underline{n},\\ 1\le |I|\le l}}(-1)^{l-|I|}\binom{n-1-|I|}{l-|I|}D(f_I)f_{\underline{n}\setminus I}. 
\end{equation} 
\end{proposition}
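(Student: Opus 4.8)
The plan is to route the proof through the Koszul bracket hierarchy rather than inducting on $n$ directly. The crucial intermediate identity, valid for \emph{any} linear operator $D$ on a commutative algebra and any finite index set $J$, is the inversion
\begin{equation*}
D(f_J)=\sum_{\emptyset\neq I\subseteq J}\langle f_i:i\in I\rangle_{|I|}^D\, f_{J\setminus I}. \tag{$\star$}
\end{equation*}
I would establish $(\star)$ by inserting the explicit formula for $\langle f_i:i\in I\rangle_{|I|}^D$ (displayed just above the statement) into the right-hand side, interchanging the two nested subset sums, and using $f_{I\setminus S}f_{J\setminus I}=f_{J\setminus S}$. The leftover scalar coefficient of $D(f_S)f_{J\setminus S}$ is then $\sum_{S\subseteq I\subseteq J}(-1)^{|I|-|S|}=\sum_{T\subseteq J\setminus S}(-1)^{|T|}$, which vanishes unless $S=J$; thus the double sum collapses to $D(f_J)$. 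Note that $(\star)$ uses nothing about the order of $D$, only the graded symmetry of the brackets.

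Next I would record that an operator of order at most $l$ has all Koszul brackets of arity strictly greater than $l$ identically zero. The case of arity $l+1$ is the definition; the inductive step is immediate from the recursion~\eqref{koszul-recursive}, which builds each bracket of arity $m+1$ out of brackets of arity $m$. Feeding this vanishing into $(\star)$ with $J=\underline{n}$ truncates the sum to nonempty $I\subseteq\underline{n}$ with $|I|\le l$.

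I would then substitute the explicit bracket formula once more into the truncated version of $(\star)$, interchange summations, and again collapse $f_{I\setminus S}f_{\underline{n}\setminus I}=f_{\underline{n}\setminus S}$, reaching
\[
D(f_1\cdots f_n)=\sum_{\emptyset\neq S\subseteq\underline{n}}D(f_S)\,f_{\underline{n}\setminus S}\sum_{\substack{S\subseteq I\subseteq\underline{n}\\ |I|\le l}}(-1)^{|I|-|S|}.
\]
Writing $m=|S|$, the inner coefficient is the truncated alternating sum $\sum_{j=0}^{l-m}(-1)^j\binom{n-m}{j}$.

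The computational heart, and the step I expect to demand the most care, is evaluating this truncated alternating binomial sum. Applying the standard identity $\sum_{j=0}^{r}(-1)^j\binom{N}{j}=(-1)^r\binom{N-1}{r}$ (a telescoping consequence of Pascal's rule) with $N=n-m$ and $r=l-m$ turns the coefficient into $(-1)^{l-m}\binom{n-1-m}{l-m}$, which is exactly the claimed sign and binomial. Finally, for $|S|>l$ the lower index $l-|S|$ is negative, so $\binom{n-1-|S|}{l-|S|}=0$ and those terms disappear, letting me restrict to $1\le|S|\le l$ as in the statement. The only remaining risk is bookkeeping: tracking the truncation $|I|\le l$ through the interchange of sums, and checking the boundary cases $|S|=l$ and $n=l+1$, where the binomial reduces to $1$ and the identity specialises to the defining relation~\eqref{order-l-definition-1}.
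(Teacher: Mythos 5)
Your proof is correct, but it takes a genuinely different route from the paper's. The paper argues by induction on $n-l$: the base case $n-l=1$ is literally the defining identity \eqref{order-l-definition-1}; for the inductive step it regards the order-$\le l$ operator as an operator of order at most $l+1$, applies the inductive hypothesis to get an expansion over subsets $J$ with $|J|\le l+1$, eliminates the terms with $|J|=l+1$ using \eqref{order-l-definition-1} (counting that each resulting term arises exactly $\binom{n-|I|}{l+1-|I|}$ times), and merges the two binomial contributions via Pascal's rule. You instead work non-inductively through the Koszul hierarchy: the inversion formula $(\star)$ (valid for an arbitrary linear operator), the vanishing of all brackets of arity greater than $l$, and the evaluation of the truncated alternating sum $\sum_{j=0}^{l-m}(-1)^j\binom{n-m}{j}=(-1)^{l-m}\binom{n-1-m}{l-m}$; each step checks out, including the boundary point that $|S|\le l\le n-1$ keeps $n-m\ge 1$, so the telescoped Pascal identity applies, and the terms with $|S|>l$ drop out automatically since the inner sum over $I$ is then empty. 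What the paper's route buys is brevity and self-containedness: everything stays within the defining relation plus a single application of Pascal's rule. What your route buys is structure: $(\star)$ is a statement of independent interest, the intermediate identity $D(f_1\cdots f_n)=\sum_{1\le|I|\le l}\langle f_i: i\in I\rangle^D_{|I|}\,f_{\underline{n}\setminus I}$ isolates exactly where the order hypothesis enters, and the binomial coefficient in \eqref{order-l-identity} is exposed conceptually as a truncated alternating sum rather than emerging from an induction.
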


\begin{proof}
Let us prove this formula by induction on $n-l$. For the basis of the induction, that is $n-l=1$, Formula~\eqref{order-l-identity} coincides literally with Formula~\eqref{order-l-definition-1}, so there is nothing to prove. For inductive step, let us note that a differential operator
of order at most~$l$ is automatically a differential operator of order at most~$l+1$, therefore by the induction hypothesis,
\begin{equation}\label{order-l+1-identity}
D(f_1f_2\cdots f_n)=\sum_{\substack{J\subset\underline{n},\\ 1\le|J|\le l+1}}(-1)^{l+1-|J|}\binom{n-1-|J|}{l+1-|J|}D(f_J)f_{\underline{n}\setminus J}. 
\end{equation}
To show that this identity is equivalent to \eqref{order-l-identity}, let us eliminate all the terms $D(f_J)f_{\underline{n}\setminus J}$ with $|J|=l+1$ using \eqref{order-l-definition-1}. This would replace these terms by a sum of terms of the form $(-1)^{k-|I|}D(f_I)f_{\underline{n}\setminus I}$, and each of the latter terms will appear exactly $\binom{n-|I|}{l+1-|I|}$ times (the number of choices of the ``new'' factors in $f_{\underline{n}\setminus I}$). Therefore, we have 
\begin{multline}
D(f_1f_2\cdots f_n)=
\sum_{I\subset\underline{n}, |I|\le l}(-1)^{l+1-|I|}\binom{n-1-|I|}{l+1-|I|}D(f_I)f_{\underline{n}\setminus I}+\\+
\sum_{I\subset\underline{n}, |I|\le l}(-1)^{l-|I|}\binom{n-|I|}{l+1-|I|}D(f_I)f_{\underline{n}\setminus I}=\\=
\sum_{I\subset\underline{n}, |I|\le l}(-1)^{l-|I|}\binom{n-1-|I|}{l-|I|}D(f_I)f_{\underline{n}\setminus I}, 
\end{multline}
which is exactly what we wanted to prove. 
\end{proof}

The key property of higher brackets that we shall need in this paper is given by the following proposition. 
\begin{proposition}[\cite{Bering96}]
We have 
\begin{multline}\label{commutator-Koszul}
\langle f_1,\ldots,f_{n-1}, f_n\rangle_n^{[A,B]}=\\= 
\sum_{\substack{I\sqcup J = \underline{n},\\ |I|=r\ge1}} 
\langle \langle f_{i_1},\ldots,f_{i_r}\rangle_r^{B},f_{j_1},\ldots,f_{j_{n-r}}\rangle_{n-r+1}^A-\\-
(-1)^{\deg(A)\deg(B)}\langle \langle f_{i_1},\ldots,f_{i_r}\rangle_r^{A},f_{j_1},\ldots,f_{j_{n-r}}\rangle_{n-r+1}^B.
\end{multline}
\end{proposition}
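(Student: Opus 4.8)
The plan is to use that $D\mapsto \langle f_1,\ldots,f_n\rangle_n^{D}$ is linear in $D$, so that, expanding the supercommutator $[A,B]=AB-(-1)^{\deg(A)\deg(B)}BA$, the left-hand side becomes $\langle f_1,\ldots,f_n\rangle_n^{AB}-(-1)^{\deg(A)\deg(B)}\langle f_1,\ldots,f_n\rangle_n^{BA}$. It is convenient to fix, for $S\subseteq\underline n$, the product $f_S=\prod_{i\in S}f_i$ and to write $d_S:=\langle (f_s)_{s\in S}\rangle^{D}_{|S|}$ for the $D$-bracket of the arguments indexed by $S$ (with $a_S,b_S$ standing for $D=A,B$). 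Writing $F^A_B(n)$ for the first sum on the right-hand side of the proposition, namely
\[
F^A_B(n)=\sum_{\emptyset\ne S\subseteq\underline n}\langle b_S, (f_j)_{j\in\underline n\setminus S}\rangle^{A}_{n-|S|+1},
\]
the identity to be proved reads $\langle f_1,\ldots,f_n\rangle_n^{[A,B]}=F^A_B(n)-(-1)^{\deg(A)\deg(B)}F^B_A(n)$. First I would prove the stronger, unsymmetrised statement $F^A_B(n)=\langle f_1,\ldots,f_n\rangle_n^{AB}+R^A_B(n)$ with a remainder satisfying $R^A_B(n)=(-1)^{\deg(A)\deg(B)}R^B_A(n)$; the remainders then cancel in the combination above and the principal terms assemble into $\langle f_1,\ldots,f_n\rangle_n^{[A,B]}$.

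The computation relies on two forms of the hierarchy. One is the explicit expansion already recorded in the excerpt, $d_S=\sum_{\emptyset\ne T\subseteq S}(-1)^{|S|-|T|}D(f_T)\,f_{S\setminus T}$, valid for any arguments. The other, which I would derive first by M\"obius inversion over the Boolean lattice (equivalently, by induction from the recursion~\eqref{koszul-recursive}), is its inverse
\[
D(f_I)=\sum_{\emptyset\ne K\subseteq I}d_K\,f_{I\setminus K},
\]
expressing $D$ on a product through lower brackets; like everything here it requires neither a unit nor the vanishing of $D$ on it. Then I would expand $F^A_B(n)$ by applying the explicit formula to the outer $A$-bracket, treating $b_S$ as a single argument occupying a distinguished slot, and split the subsets according to whether they meet that slot. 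In the terms where $A$ acts on a product $b_S f_U$ containing $b_S$, summing over the admissible $S$ at fixed $I=S\sqcup U$ and invoking the inversion formula for $B$ collapses $\sum_{\emptyset\ne K\subseteq I} b_K f_{I\setminus K}$ into $B(f_I)$, so that this group is exactly $\langle f_1,\ldots,f_n\rangle_n^{AB}$. The complementary terms, where $A$ lands on a product of the $f_i$ alone, constitute the remainder $R^A_B(n)$.

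Finally I would expand $b_S$ once more by the explicit formula, turning $R^A_B(n)$ into a sum over ordered decompositions $\underline n=U\sqcup P\sqcup Q\sqcup W$ with $U,P\neq\emptyset$, of the terms $(-1)^{|Q|+|W|+1}A(f_U)B(f_P)\,f_{Q\sqcup W}$ (the sign combining a factor $(-1)^{|W|+1}$ from the outer expansion with $(-1)^{|Q|}$ from $b_S$). Symmetry is then immediate: exchanging the dummy blocks $U$ and $P$ is a sign-preserving bijection of the index set, and transposing $A(f_U)$ and $B(f_P)$ in the graded-commutative algebra $V$ contributes the sign $(-1)^{\deg(A)\deg(B)}$, whence $R^A_B(n)=(-1)^{\deg(A)\deg(B)}R^B_A(n)$. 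The hard part, I expect, will be neither conceptual nor algebraic but purely bookkeeping: pinning down the signs through the two nested expansions and checking that the block $Q$ arising inside $S$ and the block $W$ arising outside it enter $R^A_B(n)$ on an equal footing, which is precisely what makes the remainder $(A,B)$-symmetric and the whole identity collapse to the commutator formula.
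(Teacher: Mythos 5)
Your proposal is correct, and in fact it cannot be compared against a proof in the paper: the paper states this proposition as a quoted result, with a citation to Bering--Damgaard--Alfaro, and gives no argument at all. Your proof is therefore a self-contained substitute, and I checked that its key steps go through. The M\"obius inversion $D(f_I)=\sum_{\emptyset\ne K\subseteq I}\langle f_{k_1},\ldots,f_{k_{|K|}}\rangle_{|K|}^D\, f_{I\setminus K}$ is indeed valid (the inner alternating sum $\sum_{T\subseteq K\subseteq I}(-1)^{|K|-|T|}$ collapses to $[T=I]$), and it requires neither a unit nor $D(1)=0$. Expanding the outer $A$-bracket of $F^A_B(n)$ and splitting according to whether $A$ hits the slot occupied by $b_S$ does produce, after summing over $S$ at fixed $I=S\sqcup U$, exactly $\langle f_1,\ldots,f_n\rangle_n^{AB}$, with the signs $(-1)^{|J|-|U|}=(-1)^{n-|I|}$ matching the explicit formula for the operator $AB$; and the remainder is, as you say, $\sum (-1)^{|Q|+|W|+1}A(f_U)B(f_P)f_{Q\sqcup W}$ over ordered decompositions $\underline{n}=U\sqcup P\sqcup Q\sqcup W$ with $U,P\ne\emptyset$, whose $(A,B)$-symmetry follows from the bijection $(U,P,Q,W)\mapsto(P,U,Q,W)$ together with graded commutativity (under the paper's convention of writing formulae for degree-zero elements, so that the only Koszul sign is $(-1)^{\deg(A)\deg(B)}$ from transposing $A(f_U)$ and $B(f_P)$). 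I verified the whole mechanism explicitly for $n=1,2$. One structural remark: the route taken in the cited literature is different in flavour, representing the hierarchy through nested commutators of $D$ with multiplication operators (which presupposes a unital setting or extra formalism), whereas your argument uses only the explicit expansion and its Boolean-lattice inversion, so it is both more elementary and slightly more general; it also yields the sharper intermediate statement $F^A_B(n)=\langle f_1,\ldots,f_n\rangle_n^{AB}+R^A_B(n)$, which isolates where the commutator is actually needed.
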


This formula immediately implies the well known fact that the commutator ``makes the order of differential operators drop by~$1$'':
\begin{equation}\label{filtered-Lie}
[\Diff_{\le k}(V),\Diff_{\le l}(V)]\subset\Diff_{\le k+l-1}(V). 
\end{equation}
Further in this paper, we shall need a modification of this property for commutative algebras with traces which we shall now present.

\begin{definition}
A commutative algebra~$V$ is said to be an \emph{algebra with a trace} if it is equipped with the linear functional $\str\colon\End(V)\to\mathbb{C}$ that vanishes on commutators, that is $\str([A,B])=0$ for all $A,B\in\End(V)$. 
\end{definition}

\begin{proposition}\label{order-wheel}
A differential operator $A$ of order at most $l$ on a commutative algebra~$V$ with a trace satisfies the trace identity
\begin{equation}
\str\left(\langle f_1,\ldots,f_{l-1}, f_l\rangle_l^A\cdot(-)\right)=0, \label{drop-order-1}
\end{equation}
for all $f_1,\ldots,f_l\in V$.
\end{proposition}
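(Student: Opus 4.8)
The plan is to exhibit the operator of multiplication by $\langle f_1,\ldots,f_l\rangle_l^A$ as an iterated commutator and then to use that the trace kills commutators. Write $L_f$ for the multiplication operator $g\mapsto f\cdot g$ and, for $0\le k\le l$, set $\Phi_k:=[[\cdots[[A,L_{f_1}],L_{f_2}],\ldots],L_{f_k}]\in\End(V)$ (so $\Phi_0=A$). The heart of the argument is the claim that, when $A$ has order at most $l$,
\[
\Phi_l=\langle f_1,\ldots,f_l\rangle_l^A\cdot(-).
\]
Granting this, we are done immediately: $\Phi_l=[\Phi_{l-1},L_{f_l}]$ is the commutator of the two endomorphisms $\Phi_{l-1}$ and $L_{f_l}$, so $\str(\Phi_l)=\str([\Phi_{l-1},L_{f_l}])=0$ by the defining property of a trace, which is precisely \eqref{drop-order-1}.

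First I would establish, for an arbitrary linear operator $A$ (no order hypothesis yet), the closed formula
\[
\Phi_l(g)=\sum_{I\subseteq\underline{l}}(-1)^{l-|I|}\,f_{\underline{l}\setminus I}\,A(f_I\,g),\qquad g\in V,
\]
by induction on $l$. The base case $l=1$ is the definition of $[A,L_{f_1}]$, and the inductive step follows by writing $\Phi_l(g)=\Phi_{l-1}(f_l g)-f_l\,\Phi_{l-1}(g)$ and reindexing the two sums obtained from the induction hypothesis according to whether the extra factor $f_l$ is incorporated into the subset $I$ or left outside; the two families of terms assemble into the single sum over all subsets of $\underline{l}$.

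The order hypothesis is then used on the single term of this sum indexed by $I=\underline{l}$, namely $A(f_1\cdots f_l\,g)$, which is a product of exactly $l+1$ factors. Expanding it by the defining identity \eqref{order-l-definition-1} for an operator of order at most $l$, and splitting the resulting sum according to whether the distinguished factor $g$ belongs to the chosen subset or not, one obtains exactly $\langle f_1,\ldots,f_l\rangle_l^A\cdot g$ from the subsets avoiding $g$, together with a remainder $\sum_{K\subsetneq\underline{l}}(-1)^{l-1-|K|}A(f_K g)\,f_{\underline{l}\setminus K}$ coming from the subsets containing $g$. This remainder cancels term by term against the contributions with $I\subsetneq\underline{l}$ in the closed formula above, since the two signs $(-1)^{l-1-|K|}$ and $(-1)^{l-|K|}$ are opposite. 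What is left is precisely $\langle f_1,\ldots,f_l\rangle_l^A\cdot g$, proving the claim. Note that only the defining identity \eqref{order-l-definition-1} is needed here, not the full strength of Proposition~\ref{identities-diff}.

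I do not expect a genuine obstacle: once the two formulae above are in place the proof is bookkeeping of subsets and signs. The only point demanding care is that we work in the graded setting, so every rearrangement of products, every commutator, and the trace itself must be read with the Koszul sign rule; I would run the computation for homogeneous degree-zero elements, as elsewhere in the paper, and check that the signs generated by permuting arguments in the Koszul brackets match those coming from the supercommutators, the general case then following from the sign conventions.
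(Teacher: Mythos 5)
Your proof is correct, and it rests on the same decisive mechanism as the paper's: exhibit multiplication by $\langle f_1,\ldots,f_l\rangle_l^A$ as a commutator of two endomorphisms of $V$, then invoke the vanishing of the trace on commutators. Where you differ is in how that commutator presentation is obtained. The paper does it in one step: since $A$ has order at most $l$, the operator $\langle f_1,\ldots,f_l,-\rangle_{l+1}^A$ vanishes identically, and the recursion \eqref{koszul-recursive} rearranges this at once into
\begin{equation*}
\langle f_1,\ldots,f_l\rangle_l^A\cdot(-)=\bigl[\,\langle f_1,\ldots,f_{l-1},-\rangle_l^A\,,\; f_l\cdot(-)\,\bigr],
\end{equation*}
with no induction and no expansion of the order condition. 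You instead pass through Grothendieck-style iterated commutators $\Phi_k=[[\cdots[A,L_{f_1}],\ldots],L_{f_k}]$, prove the closed formula for $\Phi_l$ by induction, and then identify $\Phi_l$ with the multiplication operator using the explicit identity \eqref{order-l-definition-1}; in effect you re-prove, in exactly the form needed, the equivalence between Grothendieck's and Koszul's definitions of a differential operator, which the paper does not redo (it cites \cite{AkmanIonescu08} for such comparisons). Your route is longer but self-contained and makes the link with the classical iterated-commutator definition explicit; the paper's is a three-line rearrangement. In fact the two presentations produce the same commutator in disguise: your $\Phi_{l-1}$ and the paper's operator $\langle f_1,\ldots,f_{l-1},-\rangle_l^A$ differ precisely by the multiplication operator $\langle f_1,\ldots,f_{l-1}\rangle_{l-1}^A\cdot(-)$, which commutes with $f_l\cdot(-)$ since $V$ is commutative, so both choices of first factor yield the same commutator identity. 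Your closing remark about Koszul signs in the graded setting is the right caveat and is consistent with the paper's convention of writing formulae for degree-zero elements.
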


\begin{proof}
Since $A$ is an operator of order at most~$l$, the operator $\langle f_1,\ldots, f_l,-\rangle_{l+1}^A$ vanishes identically. This means that
\begin{multline}
0=\langle f_1,\ldots,f_{l-1},f_l,-\rangle_{l+1}^A=\\ 
 =\langle f_1,\ldots,f_{l-1}, f_l\cdot(-)\rangle_l^A
-\langle f_1,\ldots,f_{l-1},f_l\rangle_l^A\cdot(-)
-f_l\langle f_1,\ldots,f_{l-1},-\rangle_l^A.
\end{multline}
As a consequence, we see that
\begin{equation}
\langle f_1,\ldots,f_{l-1},f_l\rangle_l^A\cdot(-)=[f_l\cdot(-),\langle f_1,\ldots,f_{l-1},-\rangle_l^A],
\end{equation}
so the trace in question vanishes being the trace of a commutator.
\end{proof}

Informally, this result means that ``the order of a differential operator drops by~$1$ in the presence of the trace''. A subclass of differential operators of particular interest to us consists of operators whose order drops unexpectedly low in the presence of the trace. 

\begin{definition}
Let $V$ be a commutative algebra with a trace. A differential operator~$A$ of order at most $l\ge 3$ is said to be \emph{strongly compatible with the trace} if it satisfies the trace identity
\begin{equation}
\str\left(\langle f_1,\ldots,f_{l-1}\rangle_{l-1}^A\cdot(-)\right)=0,\label{strong-compat-1}
\end{equation}
for all $f_1,\ldots,f_{l-1}\in V$. Notation: $A\in\Diff_{\le l}^{\str}(V)$, $\Diff^{\str}(V):=\bigcup_{l\ge3}\Diff^{\str}_{\le l}(V)$
\end{definition}

Note that from the defining property $\langle f_1,\ldots,f_{l-1}, f_l,f_{l+1}\rangle_{l+1}^A=0$ of a differential operator one can instantly deduce $\langle f_1,\ldots,f_{l-1}, f_l,f_{l+1}\rangle_{l+1}^Af_{l+2}=0$ for every $f_{l+2}\in V$. In the presence of traces, the corresponding conclusion is not obvious, and we shall prove it now. It will be frequently used in our proofs in the main part of the article.

\begin{proposition}
Let $V$ be a commutative algebra with a trace. Suppose that $A$ is a differential operator of order at most $l\ge 3$ which is strongly compatible with the trace. Then 
\begin{gather}
\str\left(\langle f_1,\ldots,f_{l-1}, f_l\rangle_l^A f_{l+1}\cdot(-)\right)=0, \label{drop-order-2}\\
\str\left(\langle f_1,\ldots,f_{l-1}\rangle_{l-1}^A f_l\cdot(-)\right)=0 \label{strong-compat-2}
\end{gather}
for all $f_1,\ldots,f_l\in V$. The first of these statements holds for $l=1$ and $l=2$ as well. 
\end{proposition}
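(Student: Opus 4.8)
The plan is to reduce both identities to the vanishing of the trace of a commutator, exploiting the recursion~\eqref{koszul-recursive}, the graded symmetry of the Koszul brackets, and the commutativity of multiplication operators. Throughout I would write $L_f$ for the operator $g\mapsto f\cdot g$ of multiplication by $f$, so that $L_fL_g=L_{fg}=L_gL_f$, and use that the trace is cyclic, $\str(XY)=\str(YX)$, since it vanishes on commutators.

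For~\eqref{drop-order-2} I would first record, exactly as in the proof of Proposition~\ref{order-wheel}, the operator identity coming from~\eqref{koszul-recursive} together with the vanishing $\langle f_1,\ldots,f_{l-1},f_l,-\rangle_{l+1}^A=0$ of the $(l+1)$-st bracket: setting $B:=\langle f_1,\ldots,f_{l-1},-\rangle_l^A$, one obtains
\[
L_{\langle f_1,\ldots,f_l\rangle_l^A}=[B,L_{f_l}].
\]
Then $\langle f_1,\ldots,f_l\rangle_l^A f_{l+1}\cdot(-)=L_{\langle f_1,\ldots,f_l\rangle_l^A}L_{f_{l+1}}=[B,L_{f_l}]L_{f_{l+1}}$, and since $L_{f_l}$ and $L_{f_{l+1}}$ commute this equals $[BL_{f_{l+1}},L_{f_l}]$, a commutator, whose trace vanishes. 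This argument uses only that $A$ has order at most $l$, so it is valid for $l=1$ and $l=2$ as well.

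For~\eqref{strong-compat-2} the naive imitation of this argument runs in circles, and the hypothesis of strong compatibility must enter in an essential way. Here I would apply strong compatibility~\eqref{strong-compat-1} to the $(l-1)$ arguments $f_1,\ldots,f_{l-2},f_{l-1}f_l$ (merging the last two), which gives $\str(\langle f_1,\ldots,f_{l-2},f_{l-1}f_l\rangle_{l-1}^A\cdot(-))=0$. Expanding $\langle f_1,\ldots,f_{l-2},f_{l-1}f_l\rangle_{l-1}^A$ by the recursion~\eqref{koszul-recursive} produces three terms, one of which is $\langle f_1,\ldots,f_l\rangle_l^A$; applying $\str(L_{(-)})$ and using Proposition~\ref{order-wheel} to annihilate that term yields the antisymmetry relation
\[
\str\bigl(\langle f_1,\ldots,f_{l-1}\rangle_{l-1}^A f_l\cdot(-)\bigr)=-\,\str\bigl(\langle f_1,\ldots,f_{l-2},f_l\rangle_{l-1}^A f_{l-1}\cdot(-)\bigr),
\]
that is, the quantity we wish to vanish changes sign when the multiplier $f_l$ is interchanged with the last bracket entry $f_{l-1}$.

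Writing $Q(g_1,\ldots,g_{l-1};g_l):=\str(\langle g_1,\ldots,g_{l-1}\rangle_{l-1}^A g_l\cdot(-))$, the graded symmetry of the Koszul brackets makes $Q$ symmetric in its first $l-1$ slots, while the relation just derived makes it antisymmetric under the transposition of the $(l-1)$-st slot with the multiplier. For $l\ge 3$ these two requirements are incompatible unless $Q\equiv 0$: starting from $Q(\ldots,a,b;c)$, with $a,b$ occupying slots $l-2,l-1$ and $c$ the multiplier, and alternately applying the antisymmetry (swap of slot $l-1$ with the multiplier, sign $-1$) and the symmetry (swap of slots $l-2$ and $l-1$, sign $+1$) six times returns to $Q(\ldots,a,b;c)$ with total sign $(-1)^3=-1$, forcing $2Q=0$ and hence $Q=0$ over $\mathbb{C}$. (Equivalently, the assignment sending the Coxeter generators $s_1,\ldots,s_{l-2}$ to $+1$ and $s_{l-1}$ to $-1$ violates the braid relation $s_{l-2}s_{l-1}s_{l-2}=s_{l-1}s_{l-2}s_{l-1}$.) The main obstacle is precisely this last step: unlike~\eqref{drop-order-2}, identity~\eqref{strong-compat-2} cannot be obtained by writing the relevant multiplication operator as a single commutator, and one must instead play strong compatibility against the symmetric-group symmetries of the brackets to force the vanishing. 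In the graded setting the same reasoning applies, with the signs above twisted by the Koszul sign rule.
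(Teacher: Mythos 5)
Your proposal is correct and takes essentially the same approach as the paper. For \eqref{drop-order-2} you spell out the same commutator trick as in Proposition~\ref{order-wheel}, and for \eqref{strong-compat-2} you derive the same multiplier--slot antisymmetry from the three-term Koszul recursion \eqref{koszul-recursive} (the paper reads that identity starting from \eqref{drop-order-1} and kills the merged-argument term by strong compatibility, while you start from strong compatibility on the merged arguments and kill the full-bracket term by \eqref{drop-order-1} --- the same identity read in two directions), after which both arguments conclude by the same odd-permutation sign contradiction with the symmetry of the Koszul brackets.
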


\begin{proof}
Let us prove the second statement; the first one is proved analogously to how it is done in Proposition~\ref{order-wheel}. Since $A$ is of order at most $l$, we have
 $$
\str\left(\langle f_1,\ldots,f_{l-1}, f_l\rangle_l^A\cdot(-)\right)=0 
 $$
for all $f_1,\ldots,f_{l-1},f_l\in V$.  
Because of the definition of the Koszul hierarchy, we have 
\begin{multline}
\str\left(\langle f_1,\ldots,f_{l-1}, f_l\rangle_l^A\cdot(-)\right)=\\=
\str\left(\langle f_1,\ldots,f_{l-2}, f_{l-1}f_l\rangle_{l-1}^A\cdot(-)\right)-\str\left(\langle f_1,\ldots,f_{l-2},f_{l-1}\rangle_{l-1}^A f_{l}\cdot(-)\right)-\\
-\str\left(f_{l-1}\langle f_1,\ldots,f_{l-2},f_l\rangle_{l-1}^A\cdot(-)\right).
\end{multline}
so since $A$ is strongly compatible with the trace we have 
\begin{equation}
\str\left(\langle f_1,\ldots,f_{l-2},f_{l-1}\rangle_{l-1}^A f_{l}\cdot(-)\right)+\str\left(f_{l-1}\langle f_1,\ldots,f_{l-2},f_l\rangle_{l-1}^A\cdot(-)\right)=0.
\end{equation}
Recalling that Koszul brackets are symmetric in their arguments, we see that
\begin{multline}
\str\left(\langle f_1,\ldots,f_{l-2},f_{l-1}\rangle_{l-1}^A f_{l}\cdot(-)\right)=-\str\left(f_{l-1}\langle f_1,\ldots,f_{l-2},f_l\rangle_{l-1}^A\cdot(-)\right)=\\=
\str\left(f_{l-2}\langle f_1,\ldots,f_{l-1},f_l\rangle_{l-1}^A\cdot(-)\right)=-\str\left(f_{l}\langle f_1,\ldots,f_{l-2},f_{l-1}\rangle_{l-1}^A\cdot(-)\right),
\end{multline}
so 
\begin{equation}
\str\left(\langle f_1,\ldots,f_{l-2},f_{l-1}\rangle_{l-1}^A f_{l}\cdot(-)\right)=0,
\end{equation}
which is what we need.
\end{proof}

\begin{proposition}\label{lie-subalgebra}
The subspace $\Diff^{\str}(V)\subset \Diff(V)$ is a Lie subalgebra. 
\end{proposition}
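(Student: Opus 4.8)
The plan is to prove the stronger, filtered statement that $[\Diff^{\str}_{\le k}(V),\Diff^{\str}_{\le l}(V)]\subseteq \Diff^{\str}_{\le k+l-1}(V)$ for all $k,l\ge 3$; since $k+l-1\ge 5\ge 3$, closure under the bracket, and hence the subalgebra claim, follows at once. Fix $A\in\Diff^{\str}_{\le k}(V)$ and $B\in\Diff^{\str}_{\le l}(V)$. First I would invoke \eqref{filtered-Lie} to record that $[A,B]$ is already a differential operator of order at most $k+l-1$, so the only thing left to verify is that it is \emph{strongly} compatible with the trace at that order, i.e.\ that $\str\left(\langle f_1,\ldots,f_{k+l-2}\rangle_{k+l-2}^{[A,B]}\cdot(-)\right)=0$ for all $f_i\in V$.

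The core of the argument is to expand this $(k+l-2)$-fold bracket by Bering's formula \eqref{commutator-Koszul} with $n=k+l-2$, and then to show that every surviving summand is annihilated by $\str(-\cdot(-))$. The key bookkeeping step is to see which set partitions actually contribute. In a summand of the first family, with an inner Koszul bracket of $B$ of size $r$ sitting inside an outer bracket of $A$, the inner bracket is nonzero only if $r\le l$ and the outer bracket (of size $n-r+1=k+l-1-r$) is nonzero only if $n-r+1\le k$; together these force $r\in\{l-1,l\}$. Symmetrically, in the second family (inner $A$-bracket inside an outer $B$-bracket) one is forced into $r\in\{k-1,k\}$. Hence only four types of terms remain.

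It then remains to match each type to one of the two vanishing mechanisms already proved. When the inner bracket is at full order ($r=l$ in the first family, $r=k$ in the second), the corresponding outer bracket has size $k-1$ (resp.\ $l-1$); reading the inner bracket, an element of $V$, as the first argument of the outer bracket, the term is killed by $\str(-\cdot(-))$ precisely because of the strong compatibility \eqref{strong-compat-1} of $A$ (resp.\ $B$). When instead the inner bracket drops by one ($r=l-1$ in the first family, $r=k-1$ in the second), the outer bracket is of full order $k$ (resp.\ $l$), and the term is killed by Proposition~\ref{order-wheel} applied to $A$ (resp.\ $B$). Since each of the four families vanishes individually, so does the full sum, giving the strong compatibility of $[A,B]$.

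The step I expect to require the most care is this combinatorial matching: confirming that the order constraints really do isolate exactly these four families, and that the two mechanisms line up correctly, with strong compatibility (vanishing one order below the nominal order) covering the ``inner full order'' cases and the generic trace identity of Proposition~\ref{order-wheel} (vanishing at the nominal order) covering the ``inner order minus one'' cases. Because each summand is annihilated on its own, the signs in \eqref{commutator-Koszul} play no role; a minor point still worth checking is that both trace identities hold for arbitrary arguments in $V$, so that substituting a composite Koszul bracket of $B$ (resp.\ $A$) into a slot of the outer bracket is legitimate.
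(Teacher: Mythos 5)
Your proof is correct and takes essentially the same route as the paper's: expand $\langle f_1,\ldots,f_{k+l-2}\rangle_{k+l-2}^{[A,B]}$ via Bering's formula~\eqref{commutator-Koszul}, observe that all terms vanish identically except those where the outer bracket has size equal to or one below the nominal order, and kill the survivors using Proposition~\ref{order-wheel} and strong compatibility, exactly as in the paper. The only cosmetic difference is that the paper carries an extra factor $f$ inside the trace (and so invokes the identities \eqref{drop-order-2} and \eqref{strong-compat-2} instead of \eqref{drop-order-1} and \eqref{strong-compat-1}), which changes nothing essential.
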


\begin{proof}
Assume that $A\in\Diff_{\le k}^{\str}(V)$ and $B\in\Diff_{\le l}^{\str}(V)$. Let us examine the trace 
\begin{equation}
\str\left(f\langle f_1,\ldots,f_{k+l-2}\rangle_{k+l-2}^{[A,B]}\cdot(-)\right),
\end{equation}
which we rewrite, using Formula~\eqref{commutator-Koszul}, as
\begin{multline}
\sum_{\substack{I\sqcup J = \underline{k+l-2},\\ |I|=r\ge1}} 
\str\left(f\langle \langle f_{i_1},\ldots,f_{i_r}\rangle_r^{B},f_{j_1},\ldots,f_{j_{k+l-2-r}}\rangle_{k+l-1-r}^A\cdot(-)\right)-\\-
(-1)^{\deg(A)\deg(B)}\str\left(f\langle \langle f_{i_1},\ldots,f_{i_r}\rangle_r^{A},f_{j_1},\ldots,f_{j_{k+l-2-r}}\rangle_{k+l-1-r}^B\cdot(-)\right).
\end{multline}
The first trace vanishes for $r\ge l+1$ because the operator $B$ is of order at most $l$ and also for $k+l-1-r\ge k-1$ (that is, for $l\ge r$) because $A$ is of order at most~$k$ and is strongly compatible with the trace. The second trace vanishes for $r\ge k+1$ because $A$ is of order at most $k$ and for $k+l-1-r\ge l-1$ (that is, for $k\ge r$) because $B$ is of order at most~$l$ and is strongly compatible with the trace. \end{proof}

\section{Stabilisers in genus~0}\label{com-bv-infty-via-givental}

In this section, we make a very particular choice of a CohFT: we consider CohFTs in genus zero, and moreover, we only work with TFTs, that is we assume we are given the classes
\begin{equation}\label{dimension-0}
 \alpha_n\in H^0(\overline{\calM}_{0,n+1},\mathbb{C})\otimes\cEnd_V(n).  
\end{equation}
The class $\alpha_2$ endows $V$ with a structure of a graded commutative associative algebra. To simplify the notation, we denote $\alpha_2(x,y)$ by $x\cdot y$, and by~$xy$.

Let $\mathbf{D}=\sum_{l\ge1}D_lz^{l-1}\in\End(V)[[z]]$ be an element of the Givental Lie algebra. It is natural to ask when such an element preserves a given cohomological field theory.

\begin{definition}[Commutative $\BV_\infty$-algebra~\cite{Kra00}]\label{comm-homotopy-bv}
A commutative algebra $V$ is said to be a \emph{commutative $\BV_\infty$-algebra} if it is equipped with a collection of operators $D_l$, $l\ge1$, each $D_l$ being a differential operator of order at most~$l$ and of degree~$2l-3$, such that  $\left(\sum\limits_{l\ge1}D_l\right)^2=0$.
\end{definition}

\begin{definition}[Chain multicomplex~\cite{Meyer}]
A \emph{chain multicomplex} is a graded vector space $V$ together with a system of operators $D_l\colon V_p\to V_{p+2l-3}$ satisfying the condition 
\begin{equation}
\left(\sum_{l\ge1}D_l\right)^2=0,
\end{equation}
or, after separating the homogeneous components,
\begin{equation}\label{resolving-delta}
\sum_{i+j=n}D_iD_j=0.  
\end{equation}
\end{definition}

\begin{remark}
This is the kind of structure that induces spectral sequences, see for example~\cite[10.3.16]{LodayVallette10}. We are using the term ``chain multicomplex'' somewhat loosely: usually chain multicomplexes are assumed to possess additional bi-gradings which are compatible with the degrees of the operators~$D_i$, while our chain multicomplexes look like bigraded chain multicomplexes with respect to the grading $\deg(V_{p,q})=q-p$.
\end{remark}

We are now ready to formulate the main result of this section. 

\begin{theorem}\label{comm-bv-infty}
Let $V$ be a graded vector space. The Givental action of the element $$\mathbf{D}=\sum_{l\ge1}D_lz^{l-1}\in\End(V)[[z]]$$ preserves a given $V$-valued TFT in genus~$0$ if and only if each linear operator $D_k$ is a differential operator of order at most $k$ on the commutative algebra~$V$.
\end{theorem}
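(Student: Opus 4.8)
The plan is to reduce the stabiliser condition $\widehat{\mathbf{D}}.\{\alpha\}=0$ to a family of correlator identities, and then recognise the simplest of them as the definition of a differential operator of order at most $k$. The first step is to separate the action by cohomological degree: for a fixed $l$, each summand of the genus~$0$ formula~\eqref{Givental-action-genus-0} contributes a class of cohomological degree $2(l-1)$ in $H^\bullet(\overline{\calM}_{0,n+1})\otimes\cEnd_V(n)$ --- the first two terms carry a $\psi$-monomial of degree $2(l-1)$, while in the third term the factor $\psi_0^i\psi_1^j$ with $i+j=l-2$ has degree $2(l-2)$, to which the pushforward $\rho_*$ adds $2$. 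Since these degrees are distinct for distinct $l$, the deformation $\widehat{\mathbf{D}}.\{\alpha\}=\sum_l\widehat{D_lz^{l-1}}.\{\alpha\}$ vanishes if and only if $\widehat{D_kz^{k-1}}.\{\alpha\}=0$ for each $k$ separately, so I may fix one operator $D:=D_k$ at a time.

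Next, since $\alpha'_n$ is a class on $\overline{\calM}_{0,n+1}$, where the tautological ring is the whole cohomology, its vanishing is equivalent to the vanishing of all of its correlators (the ``standard argument'' recalled after~\eqref{correlator-numbers}). The TFT operation attached to $\alpha_m$ is the $m$-fold product, so I would evaluate $\langle\tau_{a_0}\cdots\tau_{a_n}\rangle$ of $\alpha'_n$ termwise using the explicit values~\eqref{correlator-numbers}: the first term gives $(-1)^k\langle\tau_{a_0+k-1}\tau_{a_1}\cdots\tau_{a_n}\rangle_0\,D(f_{\underline{n}})$; the second gives $\sum_m\langle\cdots\tau_{a_m+k-1}\cdots\rangle_0\,D(f_m)f_{\underline{n}\setminus\{m\}}$; and, applying the projection formula to $\rho_*$, the third gives a sum over splittings $I\sqcup J=\underline{n}$ of products $(-1)^{i+1}\langle\tau_i\prod_{m\in I}\tau_{a_m}\rangle_0\langle\tau_j\tau_{a_0}\prod_{m\in J}\tau_{a_m}\rangle_0\,D(f_I)f_{\underline{n}\setminus I}$, where $i,j$ are fixed by the dimension constraints on the two factors.

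For the ``only if'' direction it then suffices to read off a single correlator: taking $n=k+1$ and all $a_j=0$, every factor in~\eqref{correlator-numbers} equals $1$, the unique solution of the constraints is $(i,j)=(|I|-2,|J|-1)$, and the three contributions assemble into $\sum_{1\le|I|\le k+1}(-1)^{|I|-1}D(f_I)f_{\underline{k+1}\setminus I}$; setting this to zero and isolating the top term $I=\underline{k+1}$ reproduces verbatim the defining relation~\eqref{order-l-definition-1} for $D$ to have order at most $k$. Conversely, assuming $D\in\Diff_{\le k}(V)$, I would show that \emph{every} correlator of \emph{every} $\alpha'_n$ vanishes, the relevant input being the Leibniz-type identity~\eqref{order-l-identity} of Proposition~\ref{identities-diff}, whose binomial coefficients $\binom{n-1-|I|}{k-|I|}$ are meant to match the weights produced by the multinomial correlator values.

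I expect the main obstacle to be precisely this last matching in the presence of nontrivial $\psi$-classes: the multinomials $\tfrac{(n-2)!}{a_0!\cdots a_n!}$ coming from~\eqref{correlator-numbers}, combined with the splitting sums coming from $\rho_*$, do not visibly coincide with the binomials of~\eqref{order-l-identity}. The topological recursion relations~\eqref{TRR-0} (or their symmetric form~\eqref{TRR-0-sym}) are the tool I would use to bridge the gap: TRR rewrites a correlator carrying a raised $\psi$-power at one point as exactly the kind of sum over splittings appearing in the $\rho_*$ term, so iterating it reduces every correlator to the $\psi$-free case already treated. This would establish that the order-$k$ condition is not only necessary but also sufficient for the whole system $\{\alpha'_n\}$ to vanish.
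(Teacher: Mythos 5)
Your proposal follows essentially the same route as the paper's proof: separation of the action by cohomological degree, reduction to correlator identities using that the genus-$0$ cohomology is tautological, identification of the $n=k+1$, all-$\psi$-exponents-zero identity with the defining relation \eqref{order-l-definition-1}, the case $d_1=\cdots=d_n=0$ handled by Proposition~\ref{identities-diff}, and an induction on the remaining identities driven by the topological recursion relations. The only difference is one of detail rather than of method: what you describe as ``iterating TRR'' is carried out in the paper as a precise recursion lemma (Formula~\eqref{recursion-on-identities}), whose proof requires a nontrivial cancellation in the term where $d_0$ and $d_1$ land in different correlator factors, so that step would still need to be written out carefully.
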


\begin{corollary}\label{comm-bv-infty-cor}
Let $V$ be a chain multicomplex with the structure maps $D_l$, $l\ge 1$. The Givental action of the element $\mathbf{D}=\sum_{l\ge1}D_lz^{l-1}\in\End(V)[[z]]$ preserves a given $V$-valued TFT in genus~$0$ if and only if the commutative algebra~$V$ equipped with the operators $D_i$ is a commutative~$\BV_\infty$-algebra.
\end{corollary}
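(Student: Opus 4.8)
The final statement, Corollary~\ref{comm-bv-infty-cor}, is a formal consequence of Theorem~\ref{comm-bv-infty} once the two relevant definitions are unravelled, so the plan is to reduce to the theorem and then check that the definitions match. By definition, a chain multicomplex is a graded space equipped with operators $D_l$ of degree $2l-3$ satisfying $\left(\sum_{l\ge1}D_l\right)^2=0$, equivalently the relations \eqref{resolving-delta}; a commutative $\BV_\infty$-algebra (Definition~\ref{comm-homotopy-bv}) is precisely such a datum in which, in addition, each $D_l$ is a differential operator of order at most $l$. Thus, under the standing hypothesis that $V$ is a chain multicomplex, the property of being a commutative $\BV_\infty$-algebra adds exactly one requirement, namely that every $D_l$ have order at most $l$. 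On the other hand, Theorem~\ref{comm-bv-infty} asserts that $\mathbf D=\sum_{l\ge1}D_lz^{l-1}$ preserves the TFT if and only if each $D_k$ is a differential operator of order at most $k$. Chaining the two equivalences gives the corollary. The only point deserving attention is that the theorem imposes no constraint on the internal degrees of the $D_k$, whereas the $\BV_\infty$-axioms require $\deg D_l=2l-3$ and square-zero; both of these, however, are already supplied by the chain-multicomplex hypothesis and are untouched by the Givental action, so no compatibility issue arises.

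Since all the substance sits in the cited Theorem~\ref{comm-bv-infty}, let me also indicate how I would prove that, as it is what makes the corollary non-empty. A genus $0$ TFT has all operations equal to the iterated product, $\alpha_n(f_1,\dots,f_n)=f_1\cdots f_n$; as $RH^\bullet(\overline{\calM}_{0,n+1})=H^\bullet(\overline{\calM}_{0,n+1})$, the element $\mathbf D$ preserves the TFT exactly when the infinitesimal deformation $\alpha'_n=(\widehat{\mathbf D}.\alpha)_n$ vanishes for all $n$, which in turn is equivalent to the vanishing of all of its correlators. The first step is a separation-of-degrees argument: in the single-operator formula \eqref{Givental-action-genus-0} every term coming from $D_lz^{l-1}$ carries $H^\bullet(\overline{\calM})$-degree exactly $2(l-1)$ (the first two terms through $\psi^{l-1}$, the last through $\psi^i\psi^j$ with $i+j=l-2$ together with the degree-$2$ shift of $\rho_*$), while the internal degree of $D_l$ only affects the $\End_V$-factor. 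Hence the contributions of distinct $l$ land in distinct cohomological degrees and decouple, reducing the theorem to the one-operator claim that $(\widehat{D_lz^{l-1}}.\alpha)_n=0$ for all $n$ iff $D_l$ has order at most $l$.

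For the necessity of the order condition I would evaluate the critical correlator at arity $n=l+1$, where the relevant class lies in top degree and its vanishing amounts to that of $\int_{\overline{\calM}_{0,l+2}}(\widehat{D_lz^{l-1}}.\alpha)_{l+1}$. Using the genus $0$ correlator values \eqref{correlator-numbers}, the first term yields $(-1)^lD_l(f_1\cdots f_{l+1})$, the second $\sum_m D_l(f_m)f_{\underline{l+1}\setminus\{m\}}$, and in the boundary sum the dimension constraint forces $i=|I|-2$, $j=|J|-1$, so each splitting with $2\le|I|\le l$ contributes $(-1)^{|I|+1}D_l(f_I)f_J$ with both correlator factors equal to $1$; comparing coefficients, this integral equals $(-1)^l\langle f_1,\dots,f_{l+1}\rangle_{l+1}^{D_l}$, whose vanishing is the order condition \eqref{order-l-definition-1}. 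The converse, and the main obstacle, is to show that the order condition forces every remaining correlator (at $n>l+1$, with arbitrary decorations $\prod_j\psi_j^{d_j}$) to vanish: here I would use the topological recursion relations \eqref{TRR-0} to trade higher $\psi$-powers for boundary contributions and the generalised Leibniz identity \eqref{order-l-identity} of Proposition~\ref{identities-diff}, whose binomial coefficients $\binom{n-1-|I|}{l-|I|}$ should be exactly those assembled by summing over the admissible $(i,j)$ and boundary strata, so that the $D_l(f_1\cdots f_n)$-term cancels the rest. The sign and multiplicity bookkeeping in this last step is the crux of the argument.
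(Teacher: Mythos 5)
Your reduction of the corollary to Theorem~\ref{comm-bv-infty} --- observing that the chain-multicomplex hypothesis already supplies the degrees and the square-zero condition, so that being a commutative $\BV_\infty$-algebra adds exactly the order-at-most-$l$ requirements, which is what the theorem characterises --- is precisely how the paper deduces it. Your sketch of the theorem itself also follows the paper's own three-step strategy (separation by cohomological degree, the base case $n=l+1$ recovering \eqref{order-l-definition-1}, then Proposition~\ref{identities-diff} and the topological recursion relations for the inductive step), so the proposal matches the paper's argument.
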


\begin{proof}
First of all, let us note that because of the choice \eqref{dimension-0}, the images $\widehat{D_lz^{l-1}}.\alpha_n$ of the classes $\alpha_n$ under the action of the individual components of~$\mathbf{D}$ are in different cohomological degrees, so the Givental action of $\mathbf{D}$ preserves a TFT if and only if its components preserve it, that is
\begin{equation}\label{preserving-CohFT-individually}
\widehat{D_lz^{l-1}}.\alpha_n=0, \quad \text{ for } l\ge1.
\end{equation}
Note that for $n<l+1$ these conditions are satisfied trivially for cohomological degree reasons, since $\dim_\mathbb{C}\overline{\calM}_{0,n+1}=n-2$.

Second, we shall express our conditions via correlators: taking a sequence of nonnegative integers $d_0,d_1,\ldots,d_n$ with 
$l-1+d_0+d_1+\cdots+d_n=n-2$, 
we can compute the intersection of $\widehat{D_lz^{l-1}}.\alpha_n$ with $\prod_{m=0}^n\psi_m^{d_m}$, thus obtaining the condition on an element of $\cEnd_V(n)$ instead. To take care of signs in subsequent computations, we introduce the $n$-ary operations 
\begin{equation}
F_l(\begin{smallmatrix}d_1&d_2&\cdots&d_n\\ -&-&\cdots&-\end{smallmatrix}; d_0)=(-1)^l\int_{\overline{\mathcal{M}}_{0,n+1}} \widehat{D_lz^{l-1}}.\alpha_n\cdot \prod_{m=0}^n\psi_m^{d_m}.
\end{equation}
Substituting elements $f_1,\ldots,f_n\in V$ in such an operation, we get the system of identities
\begin{equation}\label{numerical-condition-d}
F_l(\begin{smallmatrix}d_1&d_2&\cdots&d_n\\ f_1&f_2&\cdots&f_n\end{smallmatrix}; d_0)=0, \quad f_1,\ldots,f_n\in V. 
\end{equation}
In fact, Formula~\eqref{Givental-action-genus-0} implies that
\begin{multline}\label{identity-d}
F_l(\begin{smallmatrix}d_1&d_2&\cdots&d_n\\ f_1&f_2&\cdots&f_n\end{smallmatrix}; d_0)=
\langle\tau_{d_0+l-1}\tau_{d_1}\cdots\tau_{d_n}\rangle_0D_l(f_1f_2\cdots f_n)+\\+ 
(-1)^l\sum_{m=1}^n \langle\tau_{d_0}\tau_{d_1}\cdots\tau_{d_m+l-1}\cdots\tau_{d_n}\rangle_0f_1\cdots D_l(f_m)\cdots f_n+\\+
\sum_{\substack{I\sqcup J=\underline{n}, |I|\ge2,\\ i+j=l-2}}(-1)^{j+1}\langle\tau_{i}\tau_{d_I}\rangle_0
\langle\tau_{d_0}\tau_{j}\tau_{d_{J}}\rangle_0 D_l(f_I) f_J.
\end{multline}

Showing, for every fixed $l$, that the system of identities \eqref{numerical-condition-d} means precisely that $D_l$ is a differential operator of order at most~$l$ goes in several steps. First of all, we check that the identity $F_l(\begin{smallmatrix}0&0&\cdots&0\\ f_1&f_2&\cdots&f_n\end{smallmatrix}; 0)=0$ is precisely the definition of a differential operator of order at most~$l$ for $n=l+1$. Second, we check that for each~$n>l+1$ the ``most symmetric'' identity $F_l(\begin{smallmatrix}0&0&\cdots&0\\ f_1&f_2&\cdots&f_n\end{smallmatrix}; n-l-1)=0$ coincides with Identity~\eqref{order-l-identity}. To complete the proof, we use the genus~$0$ topological recursion relations to prove the remaining identities by induction.

Let us use Formula \eqref{correlator-numbers} to make the formula \eqref{identity-d} more explicit for $d_1=\cdots=d_n=0$ (which is the case for the first two steps outlined above). Since $\langle\tau_{p}\tau_{n-2-p}\tau_0^{n-2}\rangle_0=\binom{n-2}{p}$, we see that
\begin{multline}
F_l(\begin{smallmatrix}0&0&\cdots&0\\ f_1&f_2&\cdots&f_n\end{smallmatrix}; d_0)=
D_l(f_1f_2\cdots f_n)+\\+ 
(-1)^l\binom{n-2}{l-1}\sum_{m=1}^n f_1\cdots D_l(f_m)\cdots f_n+
\sum_{\substack{I\sqcup J=\underline{n}, |I|\ge2, \\ i=|I|-2,\\ j+d_0=|J|-1}}(-1)^{j+1}\binom{|J|-1}{j} D_l(f_I) f_J,
\end{multline}
which, after putting $d_0=n-l-1$ and eliminating $J$ from the notation, becomes
\begin{equation}
D_l(f_1f_2\cdots f_n)=\sum_{I\subset\underline{n}, |I|\le l}(-1)^{l-|I|}\binom{n-1-|I|}{l-|I|}D_l(f_I)f_{\underline{n}\setminus I},
\end{equation}
which is Identity~\eqref{order-l-identity}. Therefore the identity 
\begin{equation}
F_l(\begin{smallmatrix}0&0&\cdots&0\\ f_1&f_2&\cdots&f_n\end{smallmatrix}; 0)=0 
\end{equation}
coincides with Formula \eqref{order-l-definition-1} that
defines a differential operator of order at most~$l$, and the identities 
\begin{equation}
F_l(\begin{smallmatrix}0&0&\cdots&0\\ f_1&f_2&\cdots&f_n\end{smallmatrix}; n-l-1)=0 
\end{equation}
follow from it, see Proposition~\ref{identities-diff}.

It remains to complete the last step of the proof: the remaining identities follow from the identities $F_l(\begin{smallmatrix}0&0&\cdots&0\\ f_1&f_2&\cdots&f_n\end{smallmatrix}; n-l-1)=0$. This will be deduced from the following lemma.

\begin{lemma}
We have
\begin{multline}\label{recursion-on-identities}
F_l(
\begin{smallmatrix}
d_1+1&d_2&\cdots&d_n\\ 
f_1&f_2&\cdots&f_n
\end{smallmatrix}; 
d_0
)+ 
F_l(
\begin{smallmatrix}
d_1&d_2&\cdots&d_n\\ 
f_1&f_2&\cdots&f_n
\end{smallmatrix}; 
d_0+1
)=\\=
\sum_{\substack{1\in I\subset\underline{n},\\ d_{i_1}+\cdots+d_{i_r}=|I|-2}}
\langle\tau_0\tau_{d_I}\rangle_0 
F_l(
\begin{smallmatrix}
0&d_{j_1}&\cdots&d_{j_s}\\ 
f_I&f_{j_1}&\cdots&f_{j_s}
\end{smallmatrix}; 
d_0
)+\\+
\sum_{\substack{1\notin I\subset\underline{n},\\ d_0+d_{i_1}+\cdots+d_{i_r}=|I|-1}} 
\langle\tau_{d_0}\tau_0\tau_{d_I}\rangle_0 
f_I F_l(
\begin{smallmatrix}
d_{j_1}&\cdots&d_{j_s}\\ 
f_{j_1}&\cdots&f_{j_s}
\end{smallmatrix}; 
0
).
\end{multline} 
\end{lemma}

\begin{proof}
To prove this lemma, we shall examine the sum
\begin{equation}
F_l(
\begin{smallmatrix}
d_1+1&d_2&\cdots&d_n\\ 
f_1&f_2&\cdots&f_n
\end{smallmatrix}; 
d_0
)+ 
F_l(
\begin{smallmatrix}
d_1&d_2&\cdots&d_n\\ 
f_1&f_2&\cdots&f_n
\end{smallmatrix}; 
d_0+1
),
\end{equation}
having in mind the symmetrised version of the genus~$0$ topological recursion relation~\eqref{TRR-0-sym}.

Let us split that sum into several parts, according to Formula \eqref{identity-d} and to the position on the label~$1$ in the summands of that formula:
\begin{equation}\label{lhs-part-1}
\left(\langle\tau_{d_0+l-1}\tau_{d_1+1}\cdots\tau_{d_n}\rangle_0+\langle\tau_{d_0+l}\tau_{d_1}\cdots\tau_{d_n}\rangle_0\right)D_l(f_1f_2\cdots f_n),
\end{equation}

\begin{equation}\label{lhs-part-2a}
(-1)^l\left(\langle\tau_{d_0}\tau_{d_1+l}\cdots\tau_{d_m}\cdots\tau_{d_n}\rangle_0+\langle\tau_{d_0+1}\tau_{d_1+l-1}\cdots\tau_{d_m}\cdots\tau_{d_n}\rangle_0\right)D_l(f_1)f_2\cdots f_n,
\end{equation}

\begin{multline}\label{lhs-part-2b}
(-1)^l\sum_{m=2}^n \left(\langle\tau_{d_0}\tau_{d_1+1}\cdots\tau_{d_m+l-1}\cdots\tau_{d_n}\rangle_0+\right.\\\left.+\langle\tau_{d_0+1}\tau_{d_1}\cdots\tau_{d_m+l-1}\cdots\tau_{d_n}\rangle_0\right)f_1\cdots D_l(f_m)\cdots f_n,
\end{multline}

\begin{equation}\label{lhs-part-3a}
\sum_{\substack{\{1\}\sqcup I\sqcup J=\underline{n}, |I|\ge1,\\ i+j=l-2}}(-1)^{j+1}\left(\langle\tau_{i}\tau_{d_1+1}\tau_{d_I}\rangle_0
\langle\tau_{d_0}\tau_{j}\tau_{d_J}\rangle_0+\langle\tau_{i}\tau_{d_1}\tau_{d_I}\rangle_0
\langle\tau_{d_0+1}\tau_{j}\tau_{d_J}\rangle_0\right) D_l(f_1f_I) f_J,
\end{equation}

and

\begin{equation}\label{lhs-part-3b}
\sum_{\substack{\{1\}\sqcup I\sqcup J=\underline{n}, |I|\ge2,\\ i+j=l-2}}(-1)^{j+1}\left(\langle\tau_{i}\tau_{d_I}\rangle_0
\langle\tau_{d_0}\tau_{j}\tau_{d_1+1}\tau_{d_J}\rangle_0+
\langle\tau_{i}\tau_{d_I}\rangle_0 \langle\tau_{d_0+1}\tau_{j}\tau_{d_1}\tau_{d_J}\rangle_0\right) D_l(f_I) f_1f_J.
\end{equation}

Our goal now is to rewrite these sums so as to obtain directly the contributions from
\begin{equation}\label{rhs-part-1}
\sum_{\substack{1\in I\subset\underline{n},\\ d_{i_1}+\cdots+d_{i_r}=|I|-2}}
\langle\tau_0\tau_{d_I}\rangle_0 
F_l(
\begin{smallmatrix}
0&d_{j_1}&\cdots&d_{j_s}\\ 
f_I&f_{j_1}&\cdots&f_{j_s}
\end{smallmatrix}; 
d_0
) 
\end{equation}

and from

\begin{equation}\label{rhs-part-2}
\sum_{\substack{1\notin I\subset\underline{n},\\ d_0+d_{i_1}+\cdots+d_{i_r}=|I|-1}} 
\langle\tau_{d_0}\tau_0\tau_{d_I}\rangle_0 
f_I F_l(
\begin{smallmatrix}
d_{j_1}&\cdots&d_{j_s}\\ 
f_{j_1}&\cdots&f_{j_s}
\end{smallmatrix}; 
0
). 
\end{equation}

There are several cases where we can apply the topological recursion relation \eqref{TRR-0-sym} directly. Applying it to \eqref{lhs-part-1}, \eqref{lhs-part-2a}, \eqref{lhs-part-2b}, and \eqref{lhs-part-3b}, we recognise the respective summands from \eqref{rhs-part-1} and \eqref{rhs-part-2}. The only part of our sum where \eqref{TRR-0-sym} cannot be applied directly is \eqref{lhs-part-3a}, where $d_0$ and $d_1$ appear in different correlators. In the remaining part of the proof, we shall concentrate on analysing that part.

Let us rewrite \eqref{lhs-part-3a} using the topological recursion relation \eqref{TRR-0-sym} in the forms
\begin{equation}\label{apply-recursion-1}
\langle\tau_{i}\tau_{d_1+1}\tau_{d_I}\rangle_0=-\langle\tau_{i+1}\tau_{d_1}\tau_{d_{I}}\rangle_0+\sum_{K\sqcup L=I}\langle\tau_{i}\tau_0\tau_{d_L}\rangle_0\langle\tau_{0}\tau_{d_1}\tau_{d_K}\rangle_0
\end{equation}
and
\begin{equation}\label{apply-recursion-2}
\langle\tau_{d_0+1}\tau_{j}\tau_{d_J}\rangle_0=-\langle\tau_{d_0}\tau_{j+1}\tau_{d_{J}}\rangle_0+\sum_{K\sqcup L=J}\langle\tau_{d_0}\tau_0\tau_{d_L}\rangle_0\langle\tau_{0}\tau_{j}\tau_{d_K}\rangle_0.
\end{equation}
The sums on the right hand sides of \eqref{apply-recursion-1} and \eqref{apply-recursion-2} give most of the missing summands in \eqref{rhs-part-1} and \eqref{rhs-part-2} respectively. The terms $-\langle\tau_{i+1}\tau_{d_1}\tau_{d_{I}}\rangle_0$ and $-\langle\tau_{d_0}\tau_{j+1}\tau_{d_{J}}\rangle_0$, on the other hand, give rise to the terms
\begin{equation}
(-1)^j\langle\tau_{i+1}\tau_{d_1}\tau_{d_{I}}\rangle_0\langle\tau_{d_0}\tau_{j}\tau_{d_{J}}\rangle_0 D_l(f_1f_I)f_J
\end{equation}
and
\begin{equation}
(-1)^j\langle\tau_{i}\tau_{d_1}\tau_{d_{I}}\rangle_0\langle\tau_{d_0}\tau_{j+1}\tau_{d_{J}}\rangle_0 D_l(f_1f_I)f_J
\end{equation}
almost all of which can be grouped into pairs appearing with opposite signs and cancelling one another. The two terms that remain are the first and the last one,
\begin{equation}
(-1)^l\langle\tau_{0}\tau_{d_1}\tau_{d_{I}}\rangle_0\langle\tau_{d_0}\tau_{l-1}\tau_{d_{J}}\rangle_0 D_l(f_1f_I)f_J,
\end{equation}
and
\begin{equation}
\langle\tau_{l-1}\tau_{d_1}\tau_{d_{I}}\rangle_0\langle\tau_{d_0}\tau_0\tau_{d_{J}}\rangle_0 D_l(f_1f_I)f_J
\end{equation}
which are precisely the missing terms from \eqref{rhs-part-1} and \eqref{rhs-part-2} respectively.
\end{proof}

Using this lemma, we complete proof by induction. Indeed, \eqref{recursion-on-identities} allows us to make one of the $d_i$, for $i>0$, smaller at the cost of either increasing $d_0$ (keeping the sum $d_0+d_1+\cdots+d_n$ fixed) or decreasing $n$. Since we know, for a fixed~$l$, that the identities hold for small $n$ and for $d_1=d_2=\cdots=d_n=0$ for all~$n$, this is enough to complete the third (and the last) step of the proof by induction.
\end{proof}

\section{Stabilisers in genera~0 and~1}\label{sec:wheeled-ho-bv-givental}

In this section, we shall explore the property of an operator to preserve a CohFT a bit further, examining the conditions imposed by considering a TFT in genera 0 and 1. As we remarked earlier, this corresponds to dealing with wheeled operads. Thus, we assume we are given the classes
\begin{gather}
\alpha_n=\alpha_{0,n+1}\in H^0(\overline{\calM}_{0,n+1},\mathbb{C})\otimes\cEnd_V(n),\\  
\beta_n=\alpha_{1,n}\in H^0(\overline{\calM}_{1,n},\mathbb{C})\otimes\cEnd_V(n).
\end{gather}

Let us define a wheeled version of commutative $\BV_\infty$-algebras; the reasoning behind this definition can be recovered from homotopical calculations in Appendix~A. We want to emphasize that our definition of wheeled $\BV$-algebras includes the Getzler relation (also known as ``$1/12$-axiom'' for cyclic dg $\BV$-algebras); Getzler was probably the first to have understood the significance of that relation equation in the wheeled version of the operad~$\BV$.

\begin{definition}\label{wheeled-comm-homotopy-bv}[Wheeled commutative $\BV_\infty$-algebra]
A graded commutative algebra $V$ with a trace is said to be a wheeled commutative $\BV_\infty$-algebra if it is equipped with a collection of operators $D_l$, $l\ge1$, making it a commutative $\BV_\infty$-algebra, for which, in addition, the Getzler relation
\begin{equation}
\frac{1}{12}\str(D_2(f_1)\cdot(-))=\str(D_2(f_1\cdot(-)))\label{one-twelve}
\end{equation}
holds, and the operators $D_k$ for $k\ge3$ are strongly compatible with the trace.
\end{definition}

We are now ready to formulate the main result of this section. 

\begin{theorem}\label{wheeled-comm-bv-infty}
Let $V$ be a graded vector space. The Givental action of the element $$\mathbf{D}=\sum_{l\ge1}D_lz^{l-1}\in\End(V)[[z]]$$ preserves a given $V$-valued TFT in genera $0$ and~$1$ if and only if each linear operator $D_k$ is a differential operator of order at most $k$ strongly compatible with the trace on the commutative algebra~$V$.
\end{theorem}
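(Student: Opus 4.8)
The plan is to follow the same strategy as in the proof of Theorem~\ref{comm-bv-infty}, reducing the condition that $\mathbf{D}$ preserves the genera $0$ and $1$ TFT to a system of numerical identities obtained by pairing the deformed classes with monomials in $\psi$-classes, and then identifying these identities with the defining properties of differential operators strongly compatible with the trace. Since the images $\widehat{D_lz^{l-1}}.\alpha_n$ and $\widehat{D_lz^{l-1}}.\beta_n$ sit in distinct cohomological degrees, the action of $\mathbf{D}$ preserves the TFT if and only if each component preserves it, so I would analyse the conditions $\widehat{D_lz^{l-1}}.\alpha_n=0$ and $\widehat{D_lz^{l-1}}.\beta_n=0$ separately for each $l\ge1$. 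The genus~$0$ conditions $\widehat{D_lz^{l-1}}.\alpha_n=0$ are already handled by Theorem~\ref{comm-bv-infty}: they are equivalent to each $D_l$ being a differential operator of order at most~$l$. The work remaining is therefore entirely about the genus~$1$ conditions coming from Formula~\eqref{Givental-action-genus-1}.

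First I would set up the genus~$1$ analogue of the correlator machinery: introduce operations $G_l(\begin{smallmatrix}d_1&\cdots&d_n\\ f_1&\cdots&f_n\end{smallmatrix})$ obtained by integrating $\widehat{D_lz^{l-1}}.\beta_n$ against $\prod_m\psi_m^{d_m}$, and expand them using \eqref{Givental-action-genus-1}. Each of the three summands in that formula contributes a term governed by genus~$0$ and genus~$1$ correlators: the first two reproduce the pattern already analysed in genus~$0$ (now wrapped inside a trace via the genus~$1$ classes), while the third summand, coming from the contraction $\tilde{\xi}_{n+1}$, produces a new term of the form $\str\big(\langle f_1,\ldots,f_n\rangle^{D_l}_{\bullet}\cdot(-)\big)$ with a correlator coefficient $\langle\tau_i\tau_j\tau_{d_1}\cdots\tau_{d_n}\rangle_0$. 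The crux is to show that setting all these $G_l=0$ is equivalent, given that $D_l$ is already a differential operator of order at most~$l$, precisely to the trace identities \eqref{strong-compat-1} (for $l\ge3$) together with the Getzler relation \eqref{one-twelve} (for $l=2$), and to a vacuous condition for $l=1$. For $l=2$ the relevant genus~$1$ correlator is $\langle\tau_0\rangle_1=\tfrac{1}{24}$, and the factor $\tfrac12$ in \eqref{Givental-action-genus-1} combines with it to produce exactly the $\tfrac{1}{12}$ in the Getzler relation; checking this numerology carefully is essential. For $l\ge3$ the leading contraction term yields the combination $\str(\langle f_1,\ldots,f_{l-1}\rangle_{l-1}^{D_l}\cdot(-))$ defining strong compatibility.

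Then I would run the inductive reduction, mirroring the genus~$0$ argument. The analogue of Proposition~\ref{order-wheel} and the consequences \eqref{drop-order-2}, \eqref{strong-compat-2} established in Section~\ref{sec:DifOp} are exactly the tools that let me discard all the ``extra'' trace terms generated by the first two sums in \eqref{Givental-action-genus-1}: once $D_l$ is an operator of order at most~$l$, Proposition~\ref{order-wheel} kills the naive order-$l$ trace contributions, and strong compatibility (to be extracted from the base identities) kills the order-$(l-1)$ ones via \eqref{strong-compat-2}. I would prove a genus~$1$ recursion lemma analogous to \eqref{recursion-on-identities}, using the genus~$1$ topological recursion relation \eqref{TRR-1}; note that \eqref{TRR-1} has an extra $\tfrac{1}{24}$-term relative to \eqref{TRR-0}, whose contribution must be tracked and shown to be absorbed by the strong compatibility identities. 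As in genus~$0$, this recursion lets me lower a $\psi$-exponent $d_i$ at the cost of raising $d_0$ or decreasing $n$, reducing every identity to the finitely many base cases ($n$ small, or all $d_i=0$) which encode exactly strong compatibility and Getzler's relation.

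The main obstacle I expect is the third summand of \eqref{Givental-action-genus-1}, the wheel-contraction term, and its interaction with the recursion. Unlike genus~$0$, where the $\tilde{\circ}_1$ terms split a single curve into two genus-$0$ pieces, here the contraction glues two marked points on a single genus-$0$ curve to raise the genus, and the resulting trace term has a subtle combinatorial structure: it must be shown that the \emph{new} identities it imposes are precisely strong compatibility for $k\ge3$ and Getzler for $k=2$, and moreover that no \emph{additional} constraints are forced, i.e.\ that every remaining identity is a consequence of these plus the genus-$0$ order condition. The delicate part is verifying that the auxiliary trace results \eqref{drop-order-2} and \eqref{strong-compat-2} suffice to collapse all higher $\psi$-exponent versions of the contraction term, so that the content of the full genus~$1$ system reduces to Definition~\ref{wheeled-comm-homotopy-bv}. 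This is where the appeal to \cite{Pet12} and the Gorenstein property in genus~$1$ enters, guaranteeing that the tautological relations used in the recursion are complete.
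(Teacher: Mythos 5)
Your proposal follows the paper's proof almost step for step: the reduction to correlator identities $G_l=0$, the extraction of the Getzler relation from the $l=2$, all-$d_i=0$ case and of strong compatibility from the $l\ge3$, all-$d_i=0$ case, the use of Proposition~\ref{order-wheel} together with \eqref{drop-order-2} and \eqref{strong-compat-2} to dispose of the remaining trace terms, and a genus-one recursion lemma based on \eqref{TRR-1} (this is the paper's Lemma~\ref{recursion-on-identities-genus-1}, which rewrites $G_l$ in terms of the genus-zero expressions built from the operator $D_l$) --- all of this is exactly what the paper does.

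There is, however, one genuine gap, and it sits at the very first step rather than where you placed your worry. You assert at the outset that preserving the genera $0$ and~$1$ TFT is equivalent to the vanishing of all pairings of the deformed classes with $\psi$-monomials. In genus $0$ this is automatic because $H^\bullet(\overline{\calM}_{0,n})=RH^\bullet(\overline{\calM}_{0,n})$, but in genus $1$ it is not: a priori a nonzero tautological class could pair to zero with all tautological classes. The paper closes this by noting (i) Formula~\eqref{Givental-general} shows the deformation classes are tautological, (ii) the standard argument of \cite{FSZ10} shows that, once the correlator identities hold, these classes pair to zero with every tautological class, and (iii) Petersen's proof \cite{Pet12} of the genus-one Gorenstein conjecture gives a perfect pairing on $RH^\bullet(\overline{\calM}_{1,n})$, whence the deformation classes vanish. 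Your proposal cites \cite{Pet12} but attributes it to ``completeness of the tautological relations used in the recursion'' --- that is not its role; the recursion uses only the classical relations \eqref{TRR-0} and \eqref{TRR-1}, which require no such input. Without placing the Gorenstein property at the correlator-reduction step, the ``if'' direction of your argument (operator conditions imply the TFT is preserved) only establishes that the correlators are preserved, not the cohomology classes themselves. A minor slip elsewhere: the genus-one correlator producing the $1/24$ in the $l=2$ computation is $\langle\tau_1\rangle_1$, not $\langle\tau_0\rangle_1$, which vanishes for dimension reasons.
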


\begin{corollary}\label{wheeled-comm-bv-infty-cor}
Let $V$ be a chain multicomplex with structure maps $D_l$, $l\ge 1$. The Givental action of the element $\mathbf{D}=\sum_{l\ge1}D_lz^{l-1}\in\End(V)[[z]]$ preserves a given $V$-valued TFT in genera $0$ and~$1$ if and only if the commutative algebra~$V$ with a trace equipped with the operators $D_i$ is a wheeled commutative~$\BV_\infty$-algebra.
\end{corollary}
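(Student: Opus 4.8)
The plan is to follow the same three-step strategy used in the genus~$0$ case (Theorem~\ref{comm-bv-infty}), now carried out simultaneously for the $\alpha$-classes and the $\beta$-classes. Since we work with a TFT in genera~$0$ and~$1$, the images $\widehat{D_lz^{l-1}}.\alpha_n$ and $\widehat{D_lz^{l-1}}.\beta_n$ land in distinct cohomological degrees, so preserving the TFT is again equivalent to the vanishing of each individual component: $\widehat{D_lz^{l-1}}.\alpha_n=0$ and $\widehat{D_lz^{l-1}}.\beta_n=0$ for all $l\ge1$. First I would observe that the $\alpha$-part of the condition is exactly what was analysed in Theorem~\ref{comm-bv-infty}, and hence is equivalent to each $D_k$ being a differential operator of order at most~$k$. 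Thus the entire new content is carried by the $\beta$-conditions, and the task reduces to showing that, \emph{given} that each $D_k$ is already a differential operator of order at most~$k$, the vanishing $\widehat{D_lz^{l-1}}.\beta_n=0$ for all $n$ is equivalent to the Getzler relation~\eqref{one-twelve} for $l=2$ together with strong compatibility with the trace for $l\ge3$.

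Next I would pair $\widehat{D_lz^{l-1}}.\beta_n$ against $\prod_{m=1}^n\psi_m^{d_m}$ and, using Formula~\eqref{Givental-action-genus-1}, introduce a genus~$1$ analogue of the numerical operations $F_l$, call them $G_l(\begin{smallmatrix}d_1&\cdots&d_n\\ f_1&\cdots&f_n\end{smallmatrix})$, whose vanishing encodes the $\beta$-conditions. The explicit expansion of $G_l$ will have three groups of terms mirroring the three sums in~\eqref{Givental-action-genus-1}: an insertion term $\langle\tau_{d_m+l-1}\cdots\rangle_1$ contribution of the form $\str(\cdots D_l(f_m)\cdots)$, a genus-splitting term built from the genus~$0$ correlators and the trace applied to $D_l(f_I)f_J\cdot(-)$, and a self-contraction term carrying the factor $\frac12$ and the genus~$0$ correlator $\langle\tau_i\tau_j\tau_{d_1}\cdots\rangle_0$. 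For the basis of the analysis I would compute $G_l$ with all $d_m=0$, using~\eqref{correlator-numbers} to evaluate the genus~$0$ correlators and the genus~$1$ correlators $\langle\tau_p\tau_0^{n-1}\rangle_1$, and show that for $l=2$ the resulting identity is precisely~\eqref{one-twelve} (the $1/24$ term of~\eqref{TRR-1} producing the $1/12$), while for $l\ge3$ it becomes the strong-compatibility identity~\eqref{strong-compat-1}. Here I would lean on the auxiliary trace results of Section~\ref{sec:DifOp}, in particular Proposition~\ref{order-wheel} and the consequences~\eqref{drop-order-2}, \eqref{strong-compat-2}, to simplify the insertion and splitting terms, since many of them vanish automatically once $D_l$ is known to be a differential operator of order at most~$l$.

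The inductive step would then propagate the $d_m=0$ identities to arbitrary correlators. As in the genus~$0$ proof, I would establish a recursion of the shape~\eqref{recursion-on-identities} for the $G_l$, now using both the genus~$0$ relation~\eqref{TRR-0-sym} and the genus~$1$ relation~\eqref{TRR-1}: raising one index $d_1$ by one and lowering $d_0$ (or decreasing $n$) should express $G_l(\begin{smallmatrix}d_1+1&\cdots\end{smallmatrix})+G_l(\begin{smallmatrix}d_1&\cdots\end{smallmatrix})$ as a combination of lower-complexity $G_l$'s and $F_l$'s, the $F_l$-terms being already known to vanish by the genus~$0$ theorem. The extra subtlety compared to genus~$0$ is the $\frac1{24}$-term in~\eqref{TRR-1}, which feeds a genus~$0$ correlator with two additional $\tau_0$-insertions back into the recursion; tracking its sign and its interaction with the self-contraction term of $G_l$ is where the bookkeeping is most delicate.

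I expect the main obstacle to be exactly this self-contraction (wheel) term and its reconciliation with the $\frac1{24}$ contribution of the genus~$1$ topological recursion relation: establishing that, modulo the identities already proved in Section~\ref{sec:DifOp}, the surviving self-contraction contributions are precisely captured by strong compatibility (and by the Getzler relation in the boundary case $l=2$) requires a careful cancellation argument of the same flavour as the pairing-up of terms at the end of the proof of the Lemma in Section~\ref{com-bv-infty-via-givental}. A secondary difficulty, which the theorem statement flags, is that the genus~$1$ analysis implicitly relies on the tautological rings being large enough for the correlators to detect all relations; this is where the Gorenstein property in genus~$1$, established in~\cite{Pet12}, is invoked to guarantee that the vanishing of all correlators is genuinely equivalent to $\widehat{D_lz^{l-1}}.\beta_n=0$ as a cohomology class, rather than merely its tautological projection. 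Once these two points are handled, Corollary~\ref{wheeled-comm-bv-infty-cor} follows immediately by combining Theorem~\ref{wheeled-comm-bv-infty} with the defining condition $\left(\sum_{l\ge1}D_l\right)^2=0$ of a chain multicomplex, exactly as Corollary~\ref{comm-bv-infty-cor} follows from Theorem~\ref{comm-bv-infty}.
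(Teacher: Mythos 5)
Your proposal is correct and follows the paper's own proof essentially step for step: reduction to correlators justified by the genus-$1$ Gorenstein property of \cite{Pet12}, splitting off the genus-$0$ conditions as Theorem~\ref{comm-bv-infty}, extraction of the Getzler relation ($l=2$) and of strong compatibility with the trace ($l\ge3$) from the all-$d_m=0$ base case, and propagation to general correlators via the topological recursion relations. One cosmetic correction: the $\beta$-classes have no output leg, hence no $d_0$, so the genus-$1$ recursion cannot literally take the shape of~\eqref{recursion-on-identities}; the paper's Lemma~\ref{recursion-on-identities-genus-1} instead reduces $G_l$ in one step to the genus-$0$-type expressions $E_{l-1}$, $E_{l-2}$ built from $D_l$, which vanish under the trace by strong compatibility --- exactly the mechanism you describe, with simpler bookkeeping because the genus-$1$ relation~\eqref{TRR-1} is a one-point relation requiring no symmetrisation.
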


\begin{proof}
 Let us first outline the main new feature of this proof, in comparison to the genus~$0$ case. Following the same strategy as for the genus~$0$ result, we shall be able to prove that the wheeled commutative $\BV_\infty$-algebra constraints are satisfied if and only if the Givental action of the element $\mathbf{D}=\sum_{l\ge1}D_lz^{l-1}\in\End(V)[[z]]$ preserves the correlators of the corresponding $V$-valued TFT in genera $0$ and~$1$. In the genus~$0$ case, the whole cohomology ring of $\overline{\calM}_{0,n}$ coincides with the tautological ring, so preserving a TFT in genus~$0$ is equivalent to preserving all its correlators. For the genus $1$, this is not the case, and extra arguments are needed to conclude that preserving the correlators of a TFT is sufficient for a Givental Lie algebra element to preserve that TFT. First of all, a standard argument (see e.g. \cite{FSZ10}) shows that the intersection of the deformed classes defining the TFT with any tautological class is equal to zero. It remains to 
note 
that formula \eqref{Givental-general} for the Givental Lie algebra action implies that the deformations are given by tautological classes, and the Gorenstein conjecture for the moduli spaces $\overline{\calM}_{1,n}$, $n\geq 1$ \cite{HL,Pan}, proved recently in~\cite{Pet12}, ensures that the tautological rings in genus~$1$ have perfect pairings, that is, if the intersection of a given tautological class $\xi$ with any other tautological class is equal to zero, then $\xi=0$. Therefore, it is sufficient to mimic the genus~$0$ approach, and to work with correlators only.

Because the TFT data consists of degree~$0$ classes, we conclude, as in the genus~$0$ case, that the Givental action of $\mathbf{D}$ preserves a given TFT if and only if its components preserve it, so we examine the vanishing conditions
\begin{equation}\label{preserving-CohFT-individually-genus-1}
\{\alpha'(l),\beta'(l)\}:=\widehat{D_l}.\{\alpha,\beta\}=0, \quad \text{ for }l\ge 1\ .
\end{equation}

Because of Theorem \ref{comm-bv-infty}, we already know that the genus~$0$ conditions mean that we obtain a commutative $\BV_\infty$-algebra structure. The genus one part of the proof begins similarly to the proof of Theorem~\ref{comm-bv-infty}, but in fact follows largely from what we already know in genus zero. By Proposition~\ref{order-wheel}, the genus zero conditions guarantee that ``in the presence of the trace, $D_l$ is of order at most~$l-1$''.

Similarly to the genus zero case, the vanishing conditions for the classes $\beta'_n(l)$ are satisfied trivially for $n<l-1$ for cohomological degree reasons, since $\dim_\mathbb{C}\overline{\calM}_{1,n}=n$. For $n\ge l-1$, we shall express our conditions via correlators. For every sequence of nonnegative integers $d_1,\ldots,d_n$ with 
$l-1+d_1+\cdots+d_n=n$, 
we compute the intersection of $\beta'_n(l)$ with $\prod_{m=1}^n\psi_m^{d_m}$, thus obtaining the condition on an element of $\cEnd_V(n)$ instead. To take care of signs in subsequent computations, we introduce the operations with $n$ inputs and no outputs
\begin{equation}
G_l(\begin{smallmatrix}d_1&d_2&\cdots&d_n\\ -&-&\cdots&-\end{smallmatrix}):=(-1)^l\int_{\overline{\mathcal{M}}_{1,n}} \beta'_n(l)\cdot \prod_{m=1}^n\psi_m^{d_m}.
\end{equation}
Substituting elements $f_1,\ldots,f_n\in V$ in such an operation, we get the system of identities
\begin{equation}\label{numerical-condition-d-genus-1}
G_l(\begin{smallmatrix}d_1&d_2&\cdots&d_n\\ f_1&f_2&\cdots&f_n\end{smallmatrix})=0. 
\end{equation}
Moreover, Formula \eqref{Givental-action-genus-1} implies that
\begin{multline}\label{identity-d-genus-1}
G_l(\begin{smallmatrix}d_1&d_2&\cdots&d_n\\ f_1&f_2&\cdots&f_n\end{smallmatrix})=
(-1)^l\sum_{m=1}^n \langle\tau_{d_1}\cdots\tau_{d_m+l-1}\cdots\tau_{d_n}\rangle_1\str(f_1\cdots D_l(f_m)\cdots f_n\cdot(-))+\\+
\sum_{\substack{I\sqcup J=\underline{n}, |I|\ge2,\\ i+j=l-2}}(-1)^{j+1}\langle\tau_{i}\tau_{d_I}\rangle_0
\langle\tau_{j}\tau_{d_J}\rangle_1 \str(D_l(f_I) f_J\cdot(-))+\\+
\frac12\sum_{i+j=l-2}(-1)^{j+1}\langle\tau_i\tau_{d_1}\cdots\tau_{d_n}\tau_j\rangle_0 \str(D_l(f_1\ldots f_n\cdot(-))).
\end{multline}

Examining the sum
\begin{equation}
\frac12\sum_{i+j=l-2}(-1)^{j+1}\langle\tau_i\tau_{d_1}\cdots\tau_{d_n}\tau_j\rangle_0 \str(D_l(f_1\ldots f_n\cdot(-))),
\end{equation}
we see that the formula in the case $l=2$ is substantially different from the case $l>2$ (which accounts for the difference between the Getzler relation and the compatibility with the trace): 
in the former case, this sum is just one summand
\begin{equation}
-\frac12\langle\tau_0^2\tau_{d_1}\cdots\tau_{d_n}\rangle_0 \str(D_l(f_1\ldots f_n\cdot(-))),
\end{equation}
while in the latter case it is an alternating sum of binomial coefficients \eqref{correlator-numbers} and therefore vanishes. Hence we shall consider these cases separately.

Let us first consider the case $l=2$. We first examine this identity for $d_1=\cdots=d_n=0$ (this forces $n=l-1$):
\begin{multline}\label{l=2}
G_2(\begin{smallmatrix}0\\ f_1\end{smallmatrix})=\langle\tau_1\rangle_1\str(D_2(f_1)\cdot(-))-\frac12\langle\tau_0^3\rangle_0\str(D_2(f_1\cdot(-)))=\\=\frac{1}{24}\str(D_2(f_1)\cdot(-))-\frac12\str(D_2(f_1\cdot(-))),
\end{multline}
so we recover the Getzler $1/12$-relation.

Now we proceed with arbitrary $d_1,\ldots,d_n$. Formula~\eqref{identity-d-genus-1} in this case becomes
\begin{multline}\label{l=2-1}
G_2(\begin{smallmatrix}d_1&d_2&\cdots&d_n\\ f_1&f_2&\cdots&f_n\end{smallmatrix})=
\sum_{m=1}^n \langle\tau_{d_1}\cdots\tau_{d_m+1}\cdots\tau_{d_n}\rangle_1\str(f_1\cdots D_2(f_m)\cdots f_n\cdot(-))-\\-
\sum_{I\sqcup J=\underline{n}, |I|\ge2}\langle\tau_{0}\tau_{d_I}\rangle_0\langle\tau_{0}\tau_{d_J}\rangle_1 \str(D_2(f_I) f_J\cdot(-))-
\frac12\langle\tau_0^2\tau_{d_1}\cdots\tau_{d_n}\rangle_0 \str(D_2(f_1\ldots f_n\cdot(-)))
\end{multline}

Let us rewrite the $m^\text{th}$ summand in the first sum using the genus one topological recursion relation~\eqref{TRR-1} at the point~$m$, obtaining
\begin{equation}\label{l=2-2}
\langle\tau_{d_1}\cdots\tau_{d_m+1}\cdots\tau_{d_n}\rangle_1=
\left(\frac{1}{24}\langle\tau_0^2\tau_{d_1}\cdots\tau_{d_n}\rangle_0+\sum_{I\sqcup J\sqcup\{m\}=\underline{n}}
\langle\tau_{d_I}\tau_{d_m}\tau_0\rangle_0\langle\tau_0\tau_{d_J}\rangle_1\right)
\end{equation}

If we apply the Getzler $1/12$-relation (which we already obtained examining~\eqref{l=2}) to the last term of~\eqref{l=2-1}, and group the result with the sum of the first terms of~\eqref{l=2-2}, the result cancels because of Identities~\eqref{drop-order-1} and~\eqref{drop-order-2} (applied to the operator $D_2$ of order~$2$, those equations imply that it becomes an operator of order~$1$ in the presence of the trace). Grouping together the remaining terms with the same $J$, we see that each of the groups vanishes because of Identities \eqref{drop-order-1} and~\eqref{drop-order-2}.

Let us now consider the case $l>2$. For $d_1=d_2=\cdots=d_n=0$ (this forces $n=l-1$), we have 
\begin{multline}\label{l>2}
G_l(\begin{smallmatrix}0&0&\cdots&0\\ f_1&f_2&\cdots&f_{l-1}\end{smallmatrix})=
(-1)^l\sum_{m=1}^n \langle\tau_{l-1}\tau_0^{l-2}\rangle_1\str(f_1\cdots D_l(f_m)\cdots f_{l-1}\cdot(-))+\\+
\sum_{\substack{I\sqcup J=\underline{l-1}}}(-1)^{|J|+2}\langle\tau_{|I|-2}\tau_0^{|I|}\rangle_0
\langle\tau_{|J|+1}\tau_0^{|J|}\rangle_1 \str(D_l(f_I) f_J\cdot(-))=\phantom{aaaaaaaaaa}\\\phantom{aaa}=
\sum_{m=1}^n \frac{(-1)^l}{24}\str(D_l(f_m)f_{\underline{l-1}\setminus\{m\}}\cdot(-))+\sum_{\substack{I\sqcup J=\underline{l-1}, \\ |I|\ge2}}\frac{(-1)^{|J|+2}}{24} \str(D_l(f_I) f_J\cdot(-))=\\=
\frac{1}{24}\str(\langle f_1,\ldots,f_{l-1}\rangle_{l-1}^{D_l}\cdot(-)),
\end{multline}
and we obtain, up to a factor $1/24$, Identity \eqref{strong-compat-1}.

Let us show that all other identities $G_l(\begin{smallmatrix}d_1&d_2&\cdots&d_n\\ f_1&f_2&\cdots&f_n\end{smallmatrix})=0$ follow from Identities~\eqref{strong-compat-1} and~\eqref{strong-compat-2}. For that, we shall prove the following lemma.

\begin{lemma}\label{recursion-on-identities-genus-1}
We have 
\begin{multline}\label{recursion-formula-genus-1}
G_l(\begin{smallmatrix}d_1&d_2&\cdots&d_n\\ f_1&f_2&\cdots&f_n\end{smallmatrix})=\\=
-\sum_{\substack{I\sqcup J=\underline{n},\\ d_{i_1}+\cdots+d_{i_r}=|I|+1}}\langle\tau_0\tau_{d_{i_1}}\cdots\tau_{d_{i_r}}\rangle_1 \str\left(f_I E_{l-1}\left(\begin{smallmatrix}d_{j_1}&d_{j_2}&\cdots&d_{j_s}\\ f_{j_1}&f_{j_2}&\cdots&f_{j_s}\end{smallmatrix};0\right).(-)\right)+\\+
\frac{1}{24}\sum_{\substack{I\sqcup J=\underline{n},\\ d_{i_1}+\cdots+d_{i_r}=|I|}}\langle\tau_0^3\tau_{d_I}\rangle_0 \str\left(f_I E_{l-2}\left(\begin{smallmatrix}d_{j_1}&d_{j_2}&\cdots&d_{j_s}\\ f_{j_1}&f_{j_2}&\cdots&f_{j_s}\end{smallmatrix};0\right).(-)\right).
\end{multline}
Here $E_{l-1}$ and $E_{l-2}$ denote the expressions analogous to $F_{l-1}$ and $F_{l-2}$ respectively (as defined by Formula~\eqref{identity-d}), but based on the operator $D_l$, not $D_{l-1}$ and $D_{l-2}$.
\end{lemma}

\begin{proof}
Let us split \eqref{identity-d-genus-1} into separate parts, which we shall then treat individually. In the first sum, we apply to the $m^\text{th}$ summand the genus one topological recursion relation \eqref{TRR-1} at point~$m$, obtaining
\begin{multline}
\langle\tau_{d_1}\cdots\tau_{d_m+l-1}\cdots\tau_{d_n}\rangle_1=\left(\frac{1}{24}\langle\tau_{d_1}\cdots\tau_{d_m+l-2}\cdots\tau_{d_n}\tau_0^2\rangle_0+\right.\\+\left.
\sum_{A\sqcup B\sqcup\{m\}=\underline{n}}
\langle\tau_{d_A}\tau_{d_m+l-2}\tau_0\rangle_0\langle\tau_0\tau_{d_B}\rangle_1\right).
\end{multline}
We shall leave the sum unchanged, and only transform the first term, where we apply the topological recursion relation \eqref{TRR-0} at the points $m,n+1,n+2$ to get 
\begin{equation}
\frac{1}{24}\langle\tau_{d_1}\cdots\tau_{d_m+l-2}\cdots\tau_{d_n}\tau_0^2\rangle_0=\frac{1}{24}\sum_{K\sqcup L\sqcup\{m\}=\underline{n}}
\langle\tau_{d_K}\tau_{d_m+l-3}\tau_0\rangle_0\langle\tau_0^3\tau_{d_L}\rangle_0.
\end{equation}

The second sum in $\eqref{identity-d-genus-1}$ has the terms with $j=0$ and $j\ge1$; we consider them separately. The terms with $j=0$ have the coefficient
\begin{equation}\label{j-equal-to-0}
-\sum_{I\sqcup J=\underline{n}, |I|\ge2}\langle\tau_{l-2}\tau_{d_I}\rangle_0\langle\tau_0\tau_{d_J}\rangle_1=-\sum_{A\sqcup B=\underline{n}, |A|\ge2}\langle\tau_{l-2}\tau_{d_A}\rangle_0\langle\tau_0\tau_{d_B}\rangle_1, 
\end{equation}
which we shall keep intact, having only renamed the summation variables, while the terms with $j\ge1$ are being rewritten using the genus one topological recursion relation \eqref{TRR-1} at the point with label~$j$, producing the coefficient
\begin{equation}\label{j-ge-1}
(-1)^{j+1}\langle\tau_{i}\tau_{d_I}\rangle_0\left(\frac{1}{24}\langle\tau_{j-1}\tau_{d_J}\tau_0^2\rangle_0+ \sum_{A\sqcup B=J}\langle\tau_{d_A}\tau_{j-1}\tau_0\rangle_0\langle\tau_0\tau_{d_B}\rangle_1\right).
\end{equation}
In the latter formula, we do not transform the sum, but rewrite the first term (for every $j\ge2$), where we apply the topological recursion relation \eqref{TRR-0} at the first point and the last two points, replacing it by
\begin{equation}\label{j-ge-2}
\frac{(-1)^{j+1}}{24}\langle\tau_{i}\tau_{d_I}\rangle_0\left(
\sum_{K\sqcup L=J}
\langle\tau_{d_K}\tau_{j-2}\tau_0\rangle_0\langle\tau_0^3\tau_{d_L}\rangle_0
\right).
\end{equation}
For $j=1$, this rewriting cannot be performed, and we keep the corresponding term, just renaming the summation variables: 
\begin{equation}\label{j-equal-to-1}
\frac{1}{24}\langle\tau_{l-3}\tau_{d_I}\rangle_0\langle\tau_0^3\tau_{d_J}\rangle_0=\frac{1}{24}\langle\tau_{l-3}\tau_{d_K}\rangle_0\langle\tau_0^3\tau_{d_L}\rangle_0.  
\end{equation}
Now, examining the formulae above, we observe that they can be joined into two groups, one consisting of the terms having the common numeric coefficient $\frac{1}{24}\langle\tau_0^3\tau_{d_L}\rangle_0$, and the other one having the common numeric
coefficient $\langle\tau_0\tau_{d_B}\rangle_1$. Examining these groups, we obtain precisely Formula~\eqref{recursion-formula-genus-1}.
\end{proof}

Using this lemma, we complete the proof of our theorem, observing that since we already know that in the presence of the wheel the order of differential operators drops by two, the genus zero result guarantees that all further identities are satisfied.
\end{proof}

\appendix
\section{Differential operator conditions and (wheeled) homotopy BV-algebras}

In this appendix, we shall explain why it is natural to expect the differential operator conditions in the homotopy $\BV$ context.

\smallskip

The definition of \cite{Kra00} is beautiful but somewhat \emph{ad hoc}; there, the story starts from an assumption that the operator~$\sum_{l}D_l z^{l-1}$ (defining the structure of a chain multicomplex on~$V$, or, more informally, ``a resolution of the relation $\Delta^2=0$'') has homogeneous components that are differential operators of finite orders, which generally does not have to be true. The paper \cite{GTV09}, where a free resolution of the operad $\BV$ is obtained, gives a conceptual explanation: it is shown there that a homotopy $\BV$-algebra in which all higher homotopical operations except for the unary ones vanish is a commutative $\BV_\infty$-algebra in the sense of \cite{Kra00}.

In the wheeled case, a free resolution for $\BV$ is not readily available, so one has to approach this problem from a different angle. The explanation below is along the lines of the Koszul--Tate approach to minimal models \cite{Koszul50,Tate57}: to construct a minimal model, we resolve cycles step by step, adding higher and higher homotopies killing the appearing new cycles. It turns out that if we assume that all higher homotopies of arity two and more vanish, the differential operator conditions for operators $D_l$ acting on homotopy $\BV$-algebras appear naturally. This informal statement is formulated precisely and proved in the remaining part of this section.

\smallskip

Recall that the operad $\BV$ is made up of the algebra of dual numbers 
 $$
\D:=T(\Delta)/(\Delta^2),
 $$ 
considered as an operad concentrated in arity $1$, and the operad $\Com$ for commutative algebras as follows. The operad $\BV$ is equal to the quotient of the coproduct (the free product) of the operad $\D$ and  the operad $\Com$ by the relation saying that $\Delta$ is an order $2$ operator:
\begin{equation}
\BV=\frac{\D\vee \Com}{(\Delta\ \text{order}\ 2)} \ . 
\end{equation}
The cobar construction $\Omega \D^{\ac} \xrightarrow{\sim} \D$ on the Koszul dual coalgebra $\D^{\ac}$ of $\D$ is a resolution of $\D$. Its underlying algebra is the free algebra on a sequence of elements, also denoted $\{D_l\}_{l\ge 2}$ by a slight abuse of notation. Its differential is given by Formula~\eqref{resolving-delta}.

\begin{theorem}
The differential of $\Omega \D^{\ac}$ induces a well defined differential on the quotient operad 
\begin{equation}
\frac{\Omega \D^{\ac}\vee \Com}{(D_l\ \emph{order}\ l, \emph{for}\ l\ge2)}. 
\end{equation}
If a quotient dg operad 
\begin{equation}
\frac{\Omega \D^{\ac}\vee \Com}{(\emph{R})}
\end{equation}
is quasi-isomorphic to the operad $\BV$, then the space of relations $R$ contains the order $l$ relations for the operators $D_l$.
\end{theorem}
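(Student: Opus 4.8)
My plan is to exploit two gradings on the free dg operad $\mathcal{F}:=\Omega\D^{\ac}\vee\Com$: the \emph{weight} $w$, with $w(D_l)=l$ and $w|_{\Com}=0$, which the differential \eqref{resolving-delta} preserves, and the number $g$ of generators $D_l$ occurring in a word, which that differential raises by exactly one (it replaces each $D_n$ by the quadratic expression $\sum_{i+j=n}D_iD_j$). Writing $\beta_l:=\langle-,\ldots,-\rangle_{l+1}^{D_l}$ for the order-$l$ relation, I record that $\beta_l$ has $w=l$ and $g=1$, and that $\Delta=D_1$ is the weight-one generator whose homology class generates $\D$.

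For the first assertion, since $d$ is a derivation vanishing on $\Com$ it is enough to show that $d\beta_l$ lies in the ideal $I$ of order relations (including the defining relation that $\Delta=D_1$ has order $2$). By linearity of the Koszul hierarchy in its operator, $d\beta_l=\langle-,\ldots,-\rangle_{l+1}^{dD_l}=\sum_{i+j=l}\langle-,\ldots,-\rangle_{l+1}^{D_iD_j}$. Because every $D_l$ is odd (degree $2l-3$), the symmetric sum $\sum_{i+j=l}D_iD_j$ is, up to the factor $\tfrac12$, a sum of graded commutators $[D_i,D_j]$; applying \eqref{commutator-Koszul} together with the order-drop property \eqref{filtered-Lie} shows that each $[D_i,D_j]$ is an operator of order at most $l$ modulo $I$, so that its $(l+1)$-st Koszul bracket vanishes modulo $I$. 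Hence $d\beta_l\in I$, the differential descends to the quotient, and $d^2=0$ is inherited from $\mathcal{F}$.

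For the second assertion I would induct on $l$, proving $\beta_l\in R$, where $P:=\mathcal{F}/R$. First, the relation ``$\Delta=D_1$ has order $2$'', namely the weight-one cycle $\langle-,-,-\rangle_3^{D_1}$, must already lie in $R$: its image in $H(P)\cong\BV$ is the order-$2$ bracket of $\Delta$, which is zero in $\BV$, yet in weight $1$ there are no nonzero boundaries (a boundary in weight $l$ has $g\ge2$, whereas this element has $g=1$), so it can only be zero in $P$. For the inductive step, the computation above shows that $d\beta_l$ lies in the ideal generated by the lower order relations, which by induction is contained in $R$; thus $\bar\beta_l$ is a cycle in $P$. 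The quasi-isomorphism $\mathcal{F}\to P\xrightarrow{\sim}\BV$ sends each $D_l$ with $l\ge2$ to $0$ for degree reasons --- $\BV(1)=\D$ is concentrated in the degrees of $1$ and of $\Delta=[D_1]$, which excludes that of $D_l$ --- and therefore sends $\beta_l=\langle-,\ldots,-\rangle^{D_l}$ to $\langle-,\ldots,-\rangle^{0}=0$; hence the class of $\bar\beta_l$ in $H(P)\cong\BV$ vanishes and $\bar\beta_l$ is a boundary. But once more a boundary in weight $l$ has $g\ge2$ while $\beta_l$ has $g=1$, which forces $\bar\beta_l=0$, that is $\beta_l\in R$.

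The step I expect to be the main obstacle is exactly this final passage from homological triviality to an honest relation. Making it rigorous rests on two bookkeeping points: that $R$ may be taken weight-homogeneous --- arranged by passing to the associated graded for the weight filtration, so that $P$ inherits the weight grading and the dichotomy ``boundaries in weight $l$ have $g\ge2$'' is meaningful --- and a verification of the homological degrees of the cobar generators $D_l$ underlying the vanishing of their images in $\BV$ for $l\ge2$. Granting these, the weight and generator-count obstruction applies uniformly in $l$ and yields the desired containment of all order relations in $R$.
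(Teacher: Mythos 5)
Your overall strategy is the same as the paper's. For the first assertion you differentiate the order relation, observe that $dD_l$ is a sum of commutators of lower generators, and invoke \eqref{commutator-Koszul} and \eqref{filtered-Lie} to land in the ideal of order relations; for the second you induct on $l$, show the order relation becomes a cocycle in the quotient, argue its class dies in $\BV$, and then rule out its being a nonzero boundary. Your generator-count grading $g$ is in fact the precise content of the paper's informal closing sentence (the cocycle ``has to be resolved using a homotopy of arity at least $2$, and that homotopy vanishes in the resolution by the assumption''): since $d$ raises $g$ by one and kills $\Com$, boundaries have $g\ge 2$ while the order relation has $g=1$. On this point you are more explicit than the paper, and the obstacle you flag at the end --- that this dichotomy is only meaningful in $\mathcal{F}/(R)$ if the ideal is homogeneous --- is glossed over in exactly the same way by the paper's own proof. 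Your observation that weight together with homological degree determines $g$ (so weight-homogeneity of $R$ would suffice) is the right one, but ``passing to the associated graded for the weight filtration'' is not a proof: the hypothesis of being quasi-isomorphic to $\BV$ does not transfer to the associated graded without further argument, so this step remains open in your write-up just as it does in the paper's.

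The one concrete error is your indexing, and it is not purely cosmetic. In the paper, $\Omega\D^{\ac}$ is free on generators $\{D_l\}_{l\ge 2}$ of degree $2l-3$; the BV operator corresponds to $D_2$ (the unique degree-$1$ generator), and $D_1$ is not a generator at all --- it stands for the internal differential, which is why \eqref{resolving-delta} is read as $dD_n=-\tfrac12\sum_{i+j=n+1,\,i,j\ge 2}[D_i,D_j]$. You instead set $\Delta=D_1$ while simultaneously keeping the degree $2l-3$ (which would force $\deg\Delta=-1$) and keeping the relations ``$D_l$ of order $l$ for $l\ge 2$'' alongside a separate relation ``$D_1$ of order $2$''. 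This hybrid is internally inconsistent, and it breaks your first part at $l=2$: with your differential, $dD_2=D_1D_1=\tfrac12[D_1,D_1]$, and \eqref{filtered-Lie} only gives that $[D_1,D_1]$ has order at most $3$, which is not enough for $\langle-,-,-\rangle_3^{dD_2}$ to vanish modulo the ideal. In the paper's indexing the problem evaporates ($dD_2=0$, $dD_3=-D_2D_2$ with $[D_2,D_2]$ of order at most $3$, and so on), and after that repair your argument coincides, step for step, with the paper's proof.
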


\begin{proof}
We begin with the second part of the statement.  
From the definition of $\BV$-algebras, $D_2$ corresponds to $\Delta$ and therefore, it must be a second order differential operator in any the resolutions of $\BV$. Let us prove that $D_l$ must be  a differential operator of order at most~$l$ by induction, using $l=2$ as the basis. Let us make the inductive step; by the inductive hypothesis, we may assume that $l\ge2$ and $D_i$ for $i=2,\ldots,l$ is a differential operator of order~$i$.  Note that because of the homological degrees Equations \eqref{resolving-delta} can be written in the form
\begin{equation}
\sum_{i+j=n}[D_i,D_j]=0, 
\end{equation}
so
\begin{equation}
[D_1,D_{l+1}]=-\frac12\sum_{i=2}^{l}[D_i,D_{l+2-i}].
\end{equation}
Let us consider, as in Section \ref{com-bv-infty-via-givental}, the composite of operations $F_{l+1}(\begin{smallmatrix}0&0&\cdots&0\\ -&-&\cdots&-\end{smallmatrix};0)$ but viewed in the resolution 
$\frac{\Omega \D^{\ac}\vee \Com}{(\emph{R})}\xrightarrow{\sim} \BV$
 of~$\BV$ this time. It vanishes exactly when $D_{l+1}$ is a differential operator of order~$l+1$. We compute its differential in the resolution and  we denote the result by~$R_{l+1}$. That amounts to replacing~$D_{l+1}$ everywhere in that composite by $-\frac12\sum_{i=2}^{l}[D_i,D_{l+2-i}]$. In the absence of homotopies of arity $2$ and higher up, all the differential operators in that sum are of order at most~$l+1$ because of Property~\eqref{filtered-Lie}, so $R_{l+1}=0$, and the condition for $D_{l+1}$ to be a differential operator of order~$l+1$ is a cocycle. (This proves the first statement.) Hence it has to be resolved using a homotopy of arity at least~$2$, and that homotopy vanishes in the resolution by the assumption. 
\end{proof}

In the genera $0$ and $1$ case, we work with the wheeled analogues of the aforementioned operads. Let $\BV^\circlearrowright$ denote the wheeled operad which encodes $\BV$-algebras equipped with a trace satisfying the Getzler $1/12$-relation. The wheeled operads  $\Com^\circlearrowright$ and $\Omega^\circlearrowright \D^{\ac}$ are the wheelification of the corresponding operads, that is wheeled versions without any more relations.

\begin{theorem}\label{thm:WheeledDiffOp}
The differential of $\Omega \D^{\ac}$ induces a well defined differential on the quotient wheeled operad 
\begin{equation}
\frac{\Omega^\circlearrowright \D^{\ac}  \vee \Com^\circlearrowright}{(\emph{Getzler 1/12-relation};\ D_l\ \emph{order}\ l \ \&\  \emph{strongly compatible with the trace}, \ \emph{for}\ l\ge2)}.  
\end{equation}
If a quotient wheeled dg operad 
\begin{equation}
\frac{\Omega^\circlearrowright \D^{\ac}\vee \Com}{(\emph{R})} 
\end{equation}
is quasi-isomorphic to the wheeled operad $\BV^\circlearrowright$, then the space of relations $R$ contains the 
Getzler $1/12$-relation, the order $l$ relations for the operators $D_l$ and the strong compatibility with the trace. 
\end{theorem}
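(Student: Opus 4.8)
The plan is to mirror the proof of the non-wheeled statement above as closely as possible, the only genuinely new ingredient being the presence of the trace, whose behaviour under commutators is controlled by Proposition~\ref{lie-subalgebra} and the trace identities \eqref{drop-order-1}--\eqref{strong-compat-2}. Throughout I would keep the standing assumption of this appendix that all higher homotopies of arity two and more vanish, and use that the cobar differential $\partial$ determined by \eqref{resolving-delta} sends the generator $D_{l+1}$ to $-\tfrac12\sum_{i=2}^{l}[D_i,D_{l+2-i}]$, exactly as in the non-wheeled case.

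I would treat the two assertions together, since the cocycle computation needed for the second one simultaneously proves that the differential descends to the quotient. Concretely, I would compute $\partial$ of each defining relation and check that it lands in the ideal generated by the relations of strictly lower order. The Getzler relation \eqref{one-twelve} involves only $D_2=\Delta$, which is a cocycle, so its differential vanishes outright. For the order-$(l+1)$ relation the computation is essentially the non-wheeled one: after replacing $D_{l+1}$ by $-\tfrac12\sum_{i=2}^{l}[D_i,D_{l+2-i}]$ and invoking Property~\eqref{filtered-Lie}, every commutator is of order at most $l+1$, so the relevant $(l+2)$-ary Koszul bracket vanishes modulo the order relations already imposed on $D_2,\dots,D_l$. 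For the strong-compatibility relation I would perform the analogous replacement inside the trace composite (the wheeled, genus-one analogue of the $F$-composite of \eqref{identity-d}, namely the $G$-composite of \eqref{identity-d-genus-1}) and use Proposition~\ref{lie-subalgebra}: each commutator $[D_i,D_{l+2-i}]$ with $i,l+2-i\ge3$ lies in $\Diff^{\str}_{\le l+1}(V)$, so the corresponding trace of an $l$-ary bracket vanishes by the very definition of strong compatibility.

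With this in place the second assertion follows by induction on $l$, starting from $l=2$: there $D_2=\Delta$ must be a second-order operator because of the defining relation of $\BV$, and the Getzler relation must belong to $R$ since it holds by definition in $\BV^\circlearrowright$ and, being a relation of low arity closed up by the trace, cannot be resolved by any homotopy of arity $\ge 2$. For the inductive step, assuming $D_2,\dots,D_l$ are of the prescribed orders and strongly compatible with the trace (with $D_2$ satisfying Getzler), both the order-$(l+1)$ condition and the strong-compatibility condition for $D_{l+1}$ are cocycles by the computation above; since the higher generators map to zero in $\BV^\circlearrowright$, both conditions represent trivial cohomology classes, and, the arity-$\ge2$ homotopies being absent, neither can be a coboundary. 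Hence each must be imposed as a relation, that is, it lies in $R$.

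The main obstacle I anticipate is the bookkeeping for the commutators $[D_2,D_l]$ sitting at the two ends of the sum $-\tfrac12\sum_{i=2}^{l}[D_i,D_{l+2-i}]$: these are not covered by Proposition~\ref{lie-subalgebra}, which concerns only operators of order $\ge3$, and one must check separately that $[D_2,D_l]$ satisfies the strong-compatibility trace identity in order $l+1$. Here the Getzler relation \eqref{one-twelve} for $D_2$ enters in place of strong compatibility, exactly as the case $l=2$ was separated from $l>2$ in the proof of Theorem~\ref{wheeled-comm-bv-infty}; expanding $\langle\,-\,\rangle^{[D_2,D_l]}$ via the commutator formula \eqref{commutator-Koszul} and simplifying with \eqref{drop-order-1}--\eqref{strong-compat-2} together with \eqref{one-twelve} should close the argument. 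This is the one place where the wheeled case demands a genuinely new, if routine, verification beyond the non-wheeled template.
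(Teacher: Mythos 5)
Your proposal follows essentially the same route as the paper's proof: the paper likewise mirrors the operadic argument, invokes Proposition~\ref{lie-subalgebra} for commutators of operators of order at least~$3$, and isolates exactly the terms $[D_2,D_2]$ and $[D_2,D_l]$ that you flag, handling them in a dedicated lemma (Lemma~\ref{comm-compat}) proved with precisely the ingredients you name --- the expansion \eqref{commutator-Koszul}, the trace identities \eqref{drop-order-1}--\eqref{strong-compat-2}, and the Getzler relation \eqref{one-twelve} --- before running the same induction-plus-cocycle argument in the absence of arity~$\ge 2$ homotopies. The only step you defer as ``routine'' is that lemma's final cancellation, which in the paper requires a short but non-obvious manipulation (applying $D_2$ to the order-$l$ identity for $D_l$ and $(-1)^l D_l$ to the one for $D_2$, adding, and taking traces so that commutator terms cancel); otherwise your plan coincides with the paper's.
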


\begin{proof}
For wheeled operads, the same strategy works, but instead of Property~\eqref{filtered-Lie}, we shall use the notion of compatibility with the trace. Proposition~\ref{lie-subalgebra} shows that to mimic the proof in the operadic case, we need to deal with the basis of the induction carefully (since in wheeled commutative $\BV_\infty$-algebras the operator $D_2$ is not strongly compatible with the trace), and deal with the case $[D_2,D_l]$ which is the only term not covered by the induction assumption. These terms are handled by the following lemma.

\begin{lemma}\label{comm-compat}
Let $D_2$ be  a differential operator of odd degree and of order at most $2$ satisfying the Getzler $1/12$-relation, and let $D_l$ be a differential operator of odd degree and of order at most $l$, which is strongly compatible with the trace. Then both the operators $[D_2,D_2]$ and $[D_2,D_l]$ are strongly compatible with the trace, as operators of order at most $3$ and at most $l+1$ respectively. 
\end{lemma}

\begin{proof}
We shall prove these statements simultaneously, mimicking the proof of Proposition~\ref{lie-subalgebra}. Namely, we rewrite  $\str(f\langle f_1,\ldots,f_{k}\rangle_{k}^{[D_2,D_l]}\cdot(-))$ as
\begin{multline}
\sum_{\substack{I\sqcup J = \underline{l},\\ |I|=r\ge1}} 
\str\left(f\langle \langle f_{i_1},\ldots,f_{i_r}\rangle_r^{D_l},f_{j_1},\ldots,f_{j_{l-r}}\rangle_{l+1-r}^{D_2}\cdot(-)\right)+\\+
\str\left(f\langle \langle f_{i_1},\ldots,f_{i_r}\rangle_r^{D_2},f_{j_1},\ldots,f_{j_{k-r}}\rangle_{l+1-r}^{D_l}\cdot(-)\right).
\end{multline}
The first of the summands in each of the terms of this sum vanishes for $l+1-r\ge 2$ (that is for $l-1\ge r$) by Proposition~\ref{order-wheel} because $D_2$ is of order at most~$2$. The second one is equal to the first one for $l=2$, and for $l>2$ vanishes for $r\ge 3$ by Proposition~\ref{order-wheel} because $D_2$ is of order at most $2$ and for $l+1-r\ge l-1$ (that is for $2\ge r$) because $D_l$ is of order at most~$l$ and is strongly compatible with the trace. Therefore the only term that does not vanish \emph{a priori} is
\begin{equation}\label{remaining-str}
\str\left(f\langle \langle f_1,\ldots,f_l\rangle_l^{D_l}\rangle_{1}^{D_2}\cdot(-)\right).
\end{equation}
Let us prove that this term vanishes. Both the operator $D_2$ and the operator $D_l$ are of order at most~$l$, so we write
\begin{equation}\label{str-1}
\sum_{\varnothing\ne I\subset\underline{l+1}}(-1)^{l+1-|I|}D_l(f_I)f_{\underline{l+1}\setminus I}=0
\end{equation}
and
\begin{equation}\label{str-2}
\sum_{\varnothing\ne I\subset\underline{l+1}}(-1)^{l+1-|I|}D_2(f_I)f_{\underline{l+1}\setminus I}=0.
\end{equation}
Let us apply $D_2$ to Identity \eqref{str-1}, apply $(-1)^lD_l$ to Identity~\eqref{str-2}, add the results, and compute the traces with respect to the input $f_{l+1}$. Using the fact that traces vanish on commutators, it is possible to cancel all the terms $\str(D_2(D_l(f_I\cdot(-))f_J))$ with the terms $\str(D_l(D_2(f_I\cdot(-))f_J))$. It follows that
\begin{multline}
\str\left(D_2\left(\sum_{\varnothing\ne I\subset\underline{l}}(-1)^{l-|I|}D_l(f_I)f_{\underline{l}\setminus I}\cdot(-)\right)\right)+\\+
(-1)^l\str\left(D_l\left(\sum_{\varnothing\ne I\subset\underline{l}}(-1)^{l-|I|}D_2(f_I)f_{\underline{l}\setminus I}\cdot(-)\right)\right)=0.
\end{multline}
The latter identity can be rewritten as
\begin{equation}
\str\left(\langle\langle f_1,\ldots,f_l\rangle_l^{D_l}\cdot(-)\rangle_1^{D_2}\right)+(-1)^l\str\left(\langle\langle f_1,\ldots,f_l\rangle_l^{D_2}\cdot(-)\rangle_1^{D_l}\right)=0.
\end{equation}
For $l>2$, the second term vanishes since already $\langle f_1,\ldots,f_l\rangle_l^{D_2}$ vanishes; for $l=2$, the two terms are the same. In either case, applying the Getzler $1/12$-relation, we deduce that \eqref{remaining-str} vanishes. 
\end{proof}

Now everything is ready to finish the proof of Theorem~\ref{thm:WheeledDiffOp}. From the definition of wheeled $\BV$-algebras, $D_2$ corresponds to $\Delta$ and, hence, it should be a second order differential operator satisfying the Getzler $1/12$-relation in any resolution of $\BV^\circlearrowright$ in  the absence of higher homotopies of arity $\ge 2$. From the operadic proof, we already know that each operator $D_l$ is a differential operator of order at most~$l$. Let us prove that, for $l\ge 3$, these operators are strongly compatible with the trace. We consider the composite of the operations $G_{l}(\begin{smallmatrix}0&0&\cdots&0\\ -&-&\cdots&-\end{smallmatrix})$ and $G_{l}(\begin{smallmatrix}0&0&\cdots&1\\ -&-&\cdots&-\end{smallmatrix})$ in the 
resolution 
of the wheeled operad $\BV^\circlearrowright$. They  vanish precisely when $D_l$ is strongly compatible with the trace. We compute their differentials in the resolution and we denote the results by~$S_l$ and $T_l$ respectively. Computing the differential amounts to replacing~$D_{l}$ everywhere in that composite of operations by $-\frac12\sum_{i=2}^{l}[D_i,D_{l+1-i}]$. In the absence of homotopies of arity~$2$ and higher, all the differential operators in that sum are strongly compatible with the trace by Lemma~\ref{comm-compat}, induction and Proposition~\ref{lie-subalgebra} (and for the basis of induction $l=3$ --- by the first part of Lemma~\ref{comm-compat}), so $S_l=T_l=0$. (This proves the first statement.) Therefore the condition for $D_l$ to be strongly compatible with the trace is a cocycle; so it has to be resolved using a homotopy of arity at least~$2$. Finally that homotopy vanishes  by assumption. This completes the proof.
\end{proof}

\section{Generalised BCOV theory}

In this Appendix, we explain a possible application of Corollaries \ref{comm-bv-infty} and~\ref{wheeled-comm-bv-infty}.

\subsection{Motivation: classical BCOV theory}

BCOV theory is a way to construct cohomological field theories from a differential graded $\BV$-algebra, when the differential and the $\BV$-operator satisfy the Hodge condition. 

A dg $\BV$-algebra is made up of a mixed chain complex 
 $$
(A, d, \Delta),\quad d^2=\Delta^2=d\Delta+\Delta d=0,
 $$ 
equipped with a compatible commutative product. Let us describe various types of Hodge conditions that occur for this setup.

\begin{definition}[Hodge conditions]  $ \ $
\begin{itemize}
\item[$\diamond$]
The compatibility relation 
\begin{eqnarray}\label{eqn:dDelta}
\mathrm{Ker} \,  d \cap \mathrm{Ker}\, \Delta \cap (\mathrm{Im} \,d + \mathrm{Im} \,\Delta)=\mathrm{Im} \,d\Delta 
\end{eqnarray}
 is called the \emph{$d\Delta$-condition} \cite{DGMS}.

\item[$\diamond$] The mixed chain complex is called \emph{semi-classical} \cite{Par} if every homology class with respect to the differential $d$ has a representative in  the kernel of $\Delta$.

\item[$\diamond$]  A {\em Hodge-to-de Rham degeneration datum}  consists of a deformation retract 
\begin{eqnarray}
&\xymatrix{     *{ \quad \ \  \quad (A, d)\ } \ar@(dl,ul)[]^{h}\ \ar@<0.5ex>[r]^-{p} & *{\
(H(A),0)\ ,\quad \ \  \ \quad }  \ar@<0.5ex>[l]^-{i}}
\end{eqnarray}
 such that 
\begin{eqnarray}\label{eqn:NCHdR}
p (\Delta h)^{m-1}\Delta i=0 \ , \quad \text{for}\  m\ge 1 \ . 
\end{eqnarray} 
\end{itemize}
\end{definition}

It is easy to see that the following implications hold 
$$(d\Delta\text{-condition}) \Longrightarrow (\text{semi-classical}) \Longrightarrow (\text{Hodge-to-de Rham degeneration datum}) \ . $$

\begin{theorem}[\cite{DV}]
The underlying homology groups of a dg BV-algebra equipped with a Hodge-to-de Rham degeneration datum is endowed with a genus~$0$ cohomological field theory structure extending the induced commutative product. 
\end{theorem}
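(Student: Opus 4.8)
The plan is to deduce this statement from the Homotopy Transfer Theorem (HTT), following the very strategy that motivates the present article. First I would view the data $(A,d,\Delta,\cdot)$ as an algebra over the Koszul operad $\BV$, and, through the Koszul resolution $\Omega\BV^{\ac}\xrightarrow{\sim}\BV$, as a $\BV_\infty$-algebra. A Hodge-to-de Rham degeneration datum is by definition a deformation retract of the underlying chain complex $(A,d)$ onto its homology $(H(A),0)$, so the HTT applies and produces a transferred $\BV_\infty$-structure on $H(A)$, together with an $\infty$-quasi-isomorphism to $A$. The transferred operations are the usual sums over decorated graphs: vertices carry the generating operations (the commutative product and the operator $\Delta$), internal edges carry the homotopy $h$, the leaves carry $i$, and the root carries $p$.

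The key step is to isolate the part of the transferred structure governed by $\Delta$. Since $\Delta$ is a unary generator, the transferred unary operations built out of it are exactly the maps $p(\Delta h)^{m-1}\Delta i$, for $m\ge 1$; these are the induced higher operators $D_l$, $l\ge 2$, of a chain-multicomplex structure on $H(A)$ in the sense of Section~\ref{com-bv-infty-via-givental}. The defining relation~\eqref{eqn:NCHdR} of the degeneration datum states precisely that every one of these maps vanishes. Hence the transferred operator $\Delta$ together with all of its higher homotopies is identically zero on $H(A)$, and, the transferred differential being zero as well, the induced chain-multicomplex structure is trivial.

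It then remains to recognise what a $\BV_\infty$-structure with vanishing $\Delta$-homotopies amounts to. Here I would invoke the conceptual core of~\cite{DV}, the Koszul duality between the genus~$0$ operad $H_\bullet(\overline{\calM}_{0,n})$ of hypercommutative algebras and the operad $H_\bullet(\calM_{0,n})$. The explicit minimal model of $\BV$ is arranged so that the genus~$0$ (hypercommutative) operations appear among its generators, whereas the homotopies relating the various ways of composing them each carry a factor of $\Delta$ or of one of its higher homotopies. Once all the $\Delta$-operations have been set to zero, these homotopies vanish and the surviving higher products on $H(A)$ satisfy the strict relations of the operad $H_\bullet(\overline{\calM}_{0,n})$. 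This is exactly a genus~$0$ CohFT structure, and its binary operation is the commutative product induced on $H(A)$ by the product of $A$, as asserted.

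The hard part is this last identification. What must genuinely be established is that trivialising $\Delta$ and all of its homotopies inside $\BV_\infty$ leaves precisely the (minimal model of the) hypercommutative operad, and that on homology the resulting homotopy genus~$0$ CohFT strictifies to an honest representation of $H_\bullet(\overline{\calM}_{0,n})$; in particular one must control the multilinear transferred operations that still involve $\Delta$-insertions, and check that the vanishing of the unary homotopies forces them away too. The degeneration condition~\eqref{eqn:NCHdR} supplies only the vanishing of the unary $\Delta$-homotopies; the substantial content, which rests on the explicit minimal model of $\BV$ and on the Koszul duality $H_\bullet(\overline{\calM}_{0,n})$ versus $H_\bullet(\calM_{0,n})$, is that this vanishing is exactly the condition needed to pass from a homotopy $\BV$-algebra to a Frobenius manifold.
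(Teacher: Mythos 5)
Your proposal follows essentially the same route as the paper's own argument (which is the argument of \cite{DV} that the paper sketches in Appendix~B): transfer the structure through the HTT for the minimal model of $\BV$, observe that the Hodge--to--de Rham condition~\eqref{eqn:NCHdR} is exactly the vanishing of the transferred unary operations $p(\Delta h)^{m-1}\Delta i$, and then invoke the main theorem of \cite{DV} --- resting on the Koszul duality between $H_\bullet(\overline{\calM}_{0,n})$ and $H_\bullet(\calM_{0,n})$ --- to identify the remaining structure as a homotopy genus~$0$ CohFT with trivial differential, whose first stratum is a strict genus~$0$ CohFT extending the induced product. One small correction of emphasis: the multilinear transferred operations carrying internal $\Delta$-insertions are not ``forced away'' by the vanishing of the unary homotopies; rather, they survive and assemble into the hypercommutative operations themselves (as in the Barannikov--Kontsevich formulae), which is precisely what the minimal-model analysis of \cite{DV} organises.
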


This theorem was first proved under the $d\Delta$-condition in \cite{BarKon, Man}. Explicit formulae were given in \cite{LosSha}. It was shown under the semi-classical hypothesis in \cite{Par}. It was finally proved, in this most general form, in \cite{DV}, using the Homotopy Transfer Theorem (HTT for short) for the minimal model of the operad BV: the homology groups of a dg BV-algebra can be endowed with a skeletal homotopy BV-algebra structure extending the induced commutative product. Moreover, in the presence of Hodge-to-de Rham degeneration datum, the transferred operator $\Delta$ and its higher homotopies vanish and one gets a homotopy genus~$0$ CohFT with trivial differential. Therefore, its first stratum is a genus~$0$ CohFT.

The latter arguments can be applied to a commutative $BV_\infty$-algebra.  The HTT says that the transferred operators on homology are equal to 
\begin{equation}
\sum_{\substack{l_1+l_2+\cdots+l_k-k=n,\\ l_1, \ldots, l_k\ge 2}}(-1)^{k-1} \, pD_{l_1}hD_{l_2}h\ldots hD_{l_k}i \ .
\end{equation}

\begin{proposition}
Let $A$ be a commutative $\textrm{BV}_\infty$-algebra such that its operators $\{D_l\}_{l\ge 1}$ satisfy the following condition with the underlying deformation retract on the homology groups:
\begin{eqnarray}\label{eqn:HomotopyNCHdR}
\sum_{\substack{l_1+l_2+\cdots+l_k-k=n,\\ l_1, \ldots, l_k\ge 2}}(-1)^{k-1} \, pD_{l_1}hD_{l_2}h\ldots hD_{l_k}i=0 \ .
\end{eqnarray}
In this case, the underlying homology groups are endowed with a genus~$0$ cohomological field theory structure extending the induced commutative product.
\end{proposition}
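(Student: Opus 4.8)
The plan is to apply the arguments of the theorem of \cite{DV} recalled above, with the dg $\BV$-algebra replaced by the commutative $\BV_\infty$-algebra $A$ and the Hodge-to-de Rham degeneration datum replaced by the vanishing condition~\eqref{eqn:HomotopyNCHdR}. The starting point is the observation from Appendix~A that a commutative $\BV_\infty$-algebra is a particular homotopy $\BV$-algebra: it is an algebra over the minimal model of the operad $\BV$ in which all higher homotopical operations of arity $\ge 2$ vanish, only the commutative product and the unary operators $\{D_l\}_{l\ge 1}$ surviving, as established in~\cite{GTV09}. Here $D_1$ is a square-zero differential on $A$, and $H(A)$ is understood as its homology.

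First I would apply the Homotopy Transfer Theorem for the minimal model of $\BV$ to the deformation retract of $(A,D_1)$ onto $(H(A),0)$ provided by the hypothesis, thus endowing $H(A)$ with a transferred homotopy $\BV$-algebra structure whose transferred product is the commutative product induced on homology. Next I would single out the unary part of this transferred structure: by the tree formula for the HTT, recalled in the paragraph preceding the statement, the unary operations are obtained by grafting the $D_{l_i}$ along the homotopy $h$ and capping with $i$ and $p$, and their component indexed by $n$ is exactly the left-hand side of~\eqref{eqn:HomotopyNCHdR}. Thus the hypothesis says precisely that the transferred operator $\Delta$ together with all its higher homotopies vanish on $H(A)$, while the transferred operations of arity $\ge 2$ are a priori still present and will assemble the genus~$0$ structure.

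It then remains to identify the resulting structure with a genus~$0$ CohFT, exactly as in~\cite{DV}: a homotopy $\BV$-algebra whose $\Delta$-direction (the unary operators and their homotopies) is trivial is a homotopy genus~$0$ CohFT, that is a homotopy hypercommutative algebra, and since the transferred differential vanishes its first stratum is an honest algebra over the operad $H_\bullet(\overline{\calM}_{0,n})$, namely a genus~$0$ CohFT extending the induced commutative product. The hard part will be this last step, the passage from ``the transferred unary operations vanish'' to ``genus~$0$ CohFT'': it is not formal, since it relies on understanding how the hypercommutative suboperad sits inside the minimal model of $\BV$ and on the fact that collapsing the $\Delta$-direction leaves precisely its action, which is the technical core of~\cite{DV}. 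I expect either to invoke that result directly once the transferred structure has been recognised as a homotopy $\BV$-algebra with trivial $\Delta$-homotopies, or, in the spirit of the present paper, to re-derive the needed statement at the level of correlators, using Corollary~\ref{comm-bv-infty-cor} to read the exponentiated retract as a Givental group element whose action on the TFT of $A$ produces the sought CohFT on $H(A)$.
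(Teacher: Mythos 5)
Your proposal is correct and takes essentially the same route as the paper, whose own justification is precisely that the arguments of \cite{DV} apply to a commutative $\BV_\infty$-algebra: the HTT formula for the transferred unary operations is the left-hand side of the hypothesis \eqref{eqn:HomotopyNCHdR}, so the transferred $\Delta$ and all its higher homotopies vanish, and the resulting homotopy hypercommutative structure with trivial differential has a genus~$0$ CohFT as its first stratum. The paper states this almost without proof, and your write-up fills in the same steps, including the identification of commutative $\BV_\infty$-algebras as homotopy $\BV$-algebras with vanishing higher-arity operations via \cite{GTV09} and the reliance on the main result of \cite{DV} for the final identification.
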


It works like that only if we want to construct a CohFT on the operadic level; in the wheeled operad case, we have to start with a finite dimensional wheeled BV-algebra with the Hodge condition as an input (that is, we have to start with a finite-dimensional BV-algebra that satisfies the Getzler relation), and in the modular operad case, we need again a finite dimensional BV-algebra with the Hodge condition, satisfying the Getzler relation, and equipped with a scalar product that is compatible with all the structure. The required algebraic formalism was developed in~\cite{BCOV,BarKon,Man,LosSha,LosShaShn,Sha09}.

The main problem of this approach in genera higher than $0$ is the requirement for the input to be finite dimensional, since the basic examples of BV-algebras with the Hodge condition~\cite{BCOV, Mer} are infinite-dimensional. In fact, we don't know of any \emph{natural} finite-dimensional example of a wheeled BV-algebra satisfying the Hodge property.

There are different ways to resolve these kinds of problems. One of the possible approaches is discussed in the recent preprint~\cite{CosLi}, where a renormalization procedure is proposed. The price for renormalization is actually the loss of the algebraic elegancy of the BCOV theory. In the vein of \cite{DV}, one could use the HTT for the associated wheeled operad or, even better, wheeled properad, as in \cite{Merkulov10}.

Another possible strategy is to look for natural examples of a structure weaker than wheeled BCOV theory, where the same simple algebraic formalism would give explicit formulae for the induced CohFT. 
Here we explain the explicit formulae for the induced CohFT at the wheeled operad level for the input recollected from Theorems 2 and 3. To this aim, we need an analogue of the Hodge condition (\ref{eqn:NCHdR}) or (\ref{eqn:HomotopyNCHdR}) but this time for wheeled commutative $\textrm{BV}_\infty$-algebras.

Of course, in genus $0$, the formulae that we obtain must be specializations of the HTT formulae in~\cite{DV}, but, using our results one can present them in a very simple way.

\subsection{The construction} We consider a wheeled commutative $\textrm{BV}_\infty$-algebra  on a vector space $V$. 
Its structure, according to Theorems \ref{comm-bv-infty} and \ref{wheeled-comm-bv-infty}, can be described by a $V$-valued TFT that consist of classes 
\begin{equation}
\alpha_n\in H^0(\overline{\calM}_{0,n+1},\mathbb{C})\otimes\Hom(V^{\otimes n},V), n\geq 3,
\end{equation}
and
\begin{equation}
\beta_n\in H^*(\overline{\calM}_{1,n},\mathbb{C})\otimes\Hom(V^{\otimes n},\mathbb{C}), n\geq 1, 
\end{equation}
and the operators $D_l$ of order at most $l$, for each $l\ge 1$, such that the series 
\begin{equation}
D(z):=\sum_{l=1}^\infty D_lz^{l-1} 
\end{equation}
satisfies $D(z)^2=0$, and $\widehat{D(z)}.\{\alpha,\beta\}=0$.

\begin{definition}[Gauge Hodge condition]
A wheeled commutative $\textrm{BV}_\infty$-algebra $V$ is said to satisfy the \emph{gauge Hodge condition} if there exists a series $A(z)=\sum_{l=1}^\infty A_lz^l$ in $\End(V)[[z]]$ such that 
\begin{equation} \label{weakhodge}
\exp(-A(z))D_1\exp(A(z)) = D(z) .
\end{equation}
\end{definition}

This definition relaxes the semi-classical Hodge condition: if we restrict ourselves to the particular case $D(z)=D_1+D_2z$, $A(z)=A_1z$, and add the requirement $A_1^2=0$, what we obtain is precisely the semi-classical Hodge condition (see for example \cite[Remark~5.4]{Sha09}). It turns out that the conditions \eqref{eqn:HomotopyNCHdR} and \eqref{weakhodge} are equivalent, see \cite{DSV12}.

\begin{theorem} The following formula gives the structure of a wheeled operadic CohFT on the cohomology $H(V,D_1)$:
\begin{equation}
\exp(\widehat{A(z)}).\{\alpha,\beta\}_{|_{H(V,D_1)}}.
\end{equation}
\end{theorem}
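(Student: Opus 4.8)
The plan is to show that conjugating by the Givental group element $\exp(\widehat{A(z)})$ turns the defining relation $\widehat{D(z)}.\{\alpha,\beta\}=0$ into the statement that the transformed classes are preserved by the \emph{internal} differential $D_1$, and hence descend to $H(V,D_1)$, where they assemble into an honest CohFT. The one structural input I rely on is that the operadic Givental action extends to a Lie algebra action of $(\End(V)[[z]],[\,\cdot,\cdot\,])$ with the ordinary commutator bracket. Indeed, the $l=1$ component of \eqref{Givental-action-genus-0}--\eqref{Givental-action-genus-1} is nothing but the Leibniz (Hom-complex) differential induced by $D_1$, and the compatibility of the commutator bracket with this action --- of which \eqref{filtered-Lie} is the infinitesimal shadow --- is exactly what is established in \cite{KMS11,SZ,DSV12}. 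Since $A(z)\in z\End(V)[[z]]$, the element $\exp(\widehat{A(z)})$ is a well-defined Givental transformation: each $\widehat{A_lz^l}$ strictly raises the cohomological degree on every fixed $\overline{\calM}_{g,n}$, so the exponential converges, and it carries representations of the wheeled operad to representations of the wheeled operad.

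First I would record the conjugation identity. Because $\widehat{(-)}$ is a morphism of Lie algebras, conjugating the infinitesimal action $\widehat{D_1}$ by the flow $\exp(\widehat{A(z)})$ realises the adjoint action on $D_1$:
\[
\exp\!\bigl(-\widehat{A(z)}\bigr)\,\widehat{D_1}\,\exp\!\bigl(\widehat{A(z)}\bigr)=\widehat{\exp(-A(z))\,D_1\,\exp(A(z))}.
\]
By the gauge Hodge condition \eqref{weakhodge}, the operator inside the hat on the right-hand side is precisely $D(z)$, so the identity reads $\exp(-\widehat{A(z)})\,\widehat{D_1}\,\exp(\widehat{A(z)})=\widehat{D(z)}$, that is $\widehat{D_1}\,\exp(\widehat{A(z)})=\exp(\widehat{A(z)})\,\widehat{D(z)}$ as operators on the space of representations.

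Applying this to $\{\alpha,\beta\}$ and writing $\{\tilde\alpha,\tilde\beta\}:=\exp(\widehat{A(z)}).\{\alpha,\beta\}$, I get
\[
\widehat{D_1}.\{\tilde\alpha,\tilde\beta\}=\exp(\widehat{A(z)}).\bigl(\widehat{D(z)}.\{\alpha,\beta\}\bigr)=0,
\]
the last equality being the defining property of a wheeled commutative $\BV_\infty$-algebra (Theorems~\ref{comm-bv-infty} and~\ref{wheeled-comm-bv-infty}). Unwinding $\widehat{D_1}.\{\tilde\alpha,\tilde\beta\}=0$ through the $l=1$ specialisation of \eqref{Givental-action-genus-0} and \eqref{Givental-action-genus-1} --- in which every $\psi$-class and node contribution drops out, since those require $i+j=l-2=-1$ --- leaves exactly the assertion that each $\tilde\alpha_n$ and each $\tilde\beta_n$ is a chain map with respect to $D_1$. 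As $\{\tilde\alpha,\tilde\beta\}$ is itself a representation of the wheeled operad $\{H_\bullet(\overline{\calM}_{\le1,n})\}$ whose structure maps all commute with $D_1$, the whole representation descends to $H(V,D_1)$, where the induced differential is zero; this is the claimed wheeled operadic CohFT $\exp(\widehat{A(z)}).\{\alpha,\beta\}_{|_{H(V,D_1)}}$.

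The heart of the argument, and its only real obstacle, is the conjugation identity of the second paragraph. It hinges on the operadic Givental action being a genuine Lie algebra action for the commutator bracket, together with the fact that the exponentiated action intertwines conjugation by $\exp(\widehat{A(z)})$ with the adjoint action $X\mapsto\exp(-\mathrm{ad}_{A(z)})(X)$ in $\End(V)[[z]]$. Establishing this reduces to the bracket relations $[\widehat{D_1},\widehat{A_lz^l}]=\widehat{[D_1,A_l]z^l}$ and their higher counterparts, read off from \eqref{Givental-general}; I would either verify these directly or, more economically, invoke \cite{KMS11,SZ,DSV12}, where precisely this compatibility (and the equivalence of \eqref{weakhodge} with the homotopy Hodge-to-de Rham condition) is already available. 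Once the conjugation identity is granted, the descent to cohomology is formal.
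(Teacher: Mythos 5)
Your proposal is correct and follows essentially the same route as the paper's proof: general Givental theory gives a wheeled operadic CohFT on $V$, the gauge Hodge condition \eqref{weakhodge} quantises to the intertwining relation $\widehat{D_1}\exp(\widehat{A(z)})=\exp(\widehat{A(z)})\widehat{D}$, and hence the transformed classes are $D_1$-closed and restrict to $H(V,D_1)$. The only difference is one of detail: the paper compresses your second and fourth paragraphs into the phrase ``after quantisation,'' while you spell out the Lie-algebra-morphism property and the degree argument for convergence of the exponential, citing the same references the paper relies on.
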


\begin{proof}
The proof is almost obvious. Indeed, general Givental theory implies that the collection of classes $\exp(\widehat{A(z)}).\{\alpha,\beta\}$ defines a wheeled operadic CohFT on $V$.
Equation~\eqref{weakhodge} implies, after quantisation, that 
\begin{equation}
\widehat D_1 \exp(\widehat{A(z)}) = \exp(\widehat{A(z)}) \widehat{D} . 
\end{equation}
Therefore, $\exp(\widehat{A(z)}).\{\alpha,\beta\}$ consists of $D_1$-closed cohomology classes, and therefore, it can be restricted to the $D_1$-cohomology preserving the property of being a wheeled operadic CohFT.
\end{proof}

\bibliographystyle{amsplain}

\providecommand{\bysame}{\leavevmode\hbox to3em{\hrulefill}\thinspace}
\providecommand{\MR}{\relax\ifhmode\unskip\space\fi MR }
\providecommand{\MRhref}[2]{%
  \href{http://www.ams.org/mathscinet-getitem?mr=#1}{#2}
}
\providecommand{\href}[2]{#2}

\end{document}